\documentclass[11pt,oneside]{amsart}

\usepackage{graphicx}
\usepackage{amsfonts}
\usepackage{epsf}
\usepackage{amssymb}
\usepackage{amsmath}
\usepackage{amscd}
\usepackage[usenames,dvipsnames]{color}
\usepackage{tikz}
\usepackage{pdfpages}
\usepackage{fancyhdr}
\usepackage{setspace}
\usepackage{hyperref}
\usepackage[all]{xy}
\usetikzlibrary{matrix}
\usepackage{verbatim}
\usepackage{amsthm}

\theoremstyle{theorem}
\newtheorem{theorem}{Theorem}[section]
\newtheorem{proposition}[theorem]{Proposition}
\newtheorem{lemma}[theorem]{Lemma}
\newtheorem{definition}[theorem]{Definition}
\newtheorem{question}[theorem]{Question}
\newtheorem{corollary}[theorem]{Corollary}

\newtheorem{example}[theorem]{Example}

\newtheorem{lettertheorem}{Theorem}

\makeatletter
\newtheorem{letterconjecture}[lettertheorem]{Conjecture}

\makeatletter
\newtheorem{lettercorollary}[lettertheorem]{Corollary}

\makeatletter

\newtheorem*{rep@theorem}{\rep@title}
\newcommand{\newreptheorem}[2]{%
\newenvironment{rep#1}[1]{%
 \def\rep@title{#2 \ref{##1}}%
 \begin{rep@theorem}}%
 {\end{rep@theorem}}}
\makeatother

\newreptheorem{theorem}{Theorem}
\newreptheorem{lemma}{Lemma}
\newreptheorem{question}{Question}

\numberwithin{figure}{section}

\newcommand{\F}{\mathbb{F}}

\newcommand{\Z}{\mathbb{Z}}

\newcommand{\N}{\mathbb{N}}
\newcommand{\Q}{\mathbb{Q}}

\newcommand{\spinc}{\text{Spin}^c}
\newcommand{\HFhat}{\widehat{HF}}
\newcommand{\CFhat}{\widehat{CF}}
\newcommand{\im}{\text{Im}}

\topmargin = 0in
\textwidth = 6in
\textheight = 8.75in
\oddsidemargin = .25in
\evensidemargin = .25in

\pagestyle{headings}

%
\title{Distinguishing topologically and smoothly doubly slice knots}

%
\author{Jeffrey Meier} 
\address{Department of Mathematics\\Indiana University\\
         Bloomington, IN 47408\\USA}
\email{jlmeier@indiana.edu}
\urladdr{http://pages.iu.edu/~jlmeier/home.html}

\begin{document}

\begin{abstract}

We construct an infinite family of smoothly slice knots that we prove are topologically doubly slice.  Using the correction terms coming from Heegaard Floer homology, we show that none of these knots is smoothly doubly slice.  We use these knots to show that the subgroup of the double concordance group consisting of smoothly slice, topologically doubly slice knots is infinitely generated.  As a corollary, we produce an infinite collection of rational homology 3--spheres that embed in $S^4$ topologically, but not smoothly.
\end{abstract}

\maketitle

\rhead{\thepage}
\lhead{\author}
\thispagestyle{empty}

\raggedbottom

\pagenumbering{arabic}

\setcounter{section}{0}


\section{introduction}\label{section:introduction}

A knot $K$ in $S^3$ is called \emph{smoothly doubly slice} if there exists a smoothly embedded, unknotted 2--sphere $\kappa$ in $S^4$ such that $\kappa\cap S^3=K$.  Analogously, $K$ is called \emph{topologically doubly slice} if $\kappa$ is topologically locally flat.  The question of which slice knots are doubly slice was first posed by Fox in 1961 \cite{fox:problems}, and Zeeman showed that $K\#(-K)$ is always doubly slice \cite{zeeman}.  Work of Sumners encapsulates what was known up to about 1970 \cite{sumners:invertible}.  In particular, he gave necessary algebraic conditions for a knot to be doubly slice and proved that $9_{46}$ is the only doubly slice knot up to 9 crossings.  Although his proof that $9_{46}$ is doubly slice is (necessarily) geometric in nature, his obstruction methods are actually purely algebraic.  He showed that $9_{46}$ is the only knot up to 9 crossings that is algebraically doubly slice.  A knot $K$ is called \emph{algebraically doubly slice} if there exists an invertible $\Z$--valued matrix $P$ such that
$$PA_KP^\tau = \begin{bmatrix}
 0 & B_1 \\
 B_2 & 0 
 \end{bmatrix},$$
where $A_K$ is a Seifert matrix for $K$, and $B_1$ and $B_2$ are square matrices of equal dimension.  Matrices of this form are often called \emph{hyperbolic}, and have been studied by Levine \cite{levine:hyperbolic}.  
We remark that all these concepts  generalize to higher dimensions (see, for example \cite{sumners:invertible}), but we will restrict our attention to the classical dimension.

Since the work of Sumners, there have been three major geometric developments in the theory, all in the topologically locally flat category. In what follows, we will take `slice' and `doubly slice' to mean `topologically slice' and `topologically doubly slice' and clarify the category when necessary or helpful.

First, in 1983, Gilmer-Livingston showed, using Casson-Gordon invariants, that there exist slice knots that are algebraically doubly slice, but not doubly slice \cite{gilmer-livingston:embedding}.  Second, about 10 years ago, Kim \cite{kim:new} extended the bi-filtration technology introduced by Cochran-Orr-Teichner in \cite{COT} to the class of topologically doubly slice knots.  At the same time, Friedl \cite{friedl:eta} showed that certain $\eta$--invariants coming from metabelian representations $\pi_1(M_K)\to U(k)$, where $M_K$ denotes 0--surgery on $K$, can be used to obstruct double sliceness.

In this paper, the invariants used are the correction terms coming from Heegaard Floer homology (see \cite{oz-sz:absolute}).  These are smooth manifold invariants, so they are well suited to distinguish the smooth and topologically locally flat categories. A second property these invariants enjoy is the fact that, while they can be used to obstruct smooth sliceness, they do not completely vanish for smoothly slice knots, as do invariants such as the signature, $\tau$--invariant, or $s$--invariant.  In other words, they encode enough information to distinguish smooth double sliceness and smooth sliceness.  The main result of the present paper is the following.

\begin{lettertheorem}\label{thm:main}
There exists an infinite family of smoothly slice knots that are topologically doubly slice, but not smoothly doubly slice.
\end{lettertheorem}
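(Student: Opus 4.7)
My plan is to build the family $\{K_n\}$ as a parametrized collection of ribbon knots assembled from a carefully chosen topologically slice building block. Concretely, I would pick a topologically slice knot $J$ with Alexander polynomial $1$ whose double branched cover $\Sigma(J)$ has a non-vanishing Heegaard Floer correction term (for instance, a positive Whitehead double of a knot with $\tau > 0$, or a similar satellite construction). I would then let $K_n$ be a knot built from $n$ copies of $J$ via connected sums and satellite operations, arranged so that a ribbon disk is visible from the diagram. Smooth sliceness of each $K_n$ is then immediate by exhibition of the ribbon disk.

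To prove that $K_n$ is topologically doubly slice, I would first produce an explicit $\Z$--valued congruence $P_n$ hyperbolizing a Seifert matrix of $K_n$, verifying algebraic double sliceness. The algebraic splitting picks out two complementary Lagrangians of the Seifert form. Because each factor has trivial Alexander polynomial, Freedman's disk embedding theorem applies to each half to produce a topologically locally flat slice disk in $D^4$ realizing that Lagrangian. Gluing two copies of $D^4$ along $S^3$ assembles these disks into a topologically locally flat $2$--sphere in $S^4$ meeting the equator in $K_n$, and a Schoenflies--type argument shows that this sphere is unknotted.

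For the smooth obstruction, suppose for contradiction that $K_n$ were smoothly doubly slice. Then $\Sigma(K_n)$ would split $S^4$ (its double branched cover) smoothly as $W_n^+ \cup W_n^-$, with each $W_n^\pm$ a smooth rational homology $4$--ball. By \cite{oz-sz:absolute}, this produces a pair of complementary metabolizers $G_n^\pm \subset H^2(\Sigma(K_n); \Z)$ on which all the correction terms $d(\Sigma(K_n), \s)$ vanish. Using additivity of correction terms under connected sums of $3$--manifolds, together with a plumbing description of $\Sigma(J)$, I would compute $d(\Sigma(K_n), \s)$ explicitly and show combinatorially that no such pair of complementary vanishing metabolizers can exist. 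Applied to arbitrary $\Z$--linear combinations of the $K_n$, the same computation will yield linear independence in the smooth double concordance group, proving both the infinite family statement and the corollary on rational homology $3$--spheres.

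The main obstacle will be the construction of the topological doubly slicing $2$--sphere: algebraic double sliceness is easy to arrange by the choice of building block, and the $d$--invariant obstruction is a (computationally involved but) routine application of the framework of \cite{oz-sz:absolute}; however, upgrading the algebraic data to an actual locally flat unknotted $S^2$ in $S^4$ requires careful use of Freedman's theorem and a matching argument along $K_n$ ensuring that the two slice halves glue into a single locally flat sphere.
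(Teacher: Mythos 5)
Your outline of the smooth obstruction is essentially the right one (it matches Theorem \ref{thm:hyp_corr_terms}: a smooth embedding $\Sigma_2(K)\hookrightarrow S^4$ forces the correction terms to vanish on a pair of complementary metabolizers), and you correctly identify the topological double sliceness step as the crux. But the route you propose for that step does not work. You cannot pass from an algebraically hyperbolic Seifert matrix to a locally flat unknotted $2$--sphere by ``applying Freedman to each Lagrangian half'': Freedman's theorem produces a slice disk for a \emph{knot} with trivial Alexander polynomial, not a disk ``realizing'' a chosen Lagrangian of a nontrivial Seifert form, and there is no mechanism for gluing two such halves into an unknotted sphere (unknottedness of a $2$--knot requires $\pi_1$ of the complement to be $\Z$, which is a genuine constraint, not a Schoenflies--type formality). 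Indeed, Gilmer--Livingston \cite{gilmer-livingston:embedding} constructed slice knots that are algebraically doubly slice but not topologically doubly slice, so the implication you need is false in general. Note also that you cannot escape by arranging $\Delta_{K_n}=1$ for the whole knot (which \emph{would} let Freedman produce an unknotted sphere, as in Corollary \ref{coro:freedman}): then $\det(K_n)=1$, $\Sigma_2(K_n)$ is an integral homology sphere, and your $d$--invariant obstruction collapses, since a smoothly slice knot already has $d(\Sigma_2(K_n))=0$.

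The paper's fix is different in an essential way: one starts with a knot that is honestly (even smoothly) doubly slice, namely $T_{2,p}\#T_{2,-p}$ (Zeeman), and modifies it by \emph{infection} along an unknotted curve $\eta$ with a companion $D$ of Alexander polynomial one. Proposition \ref{prop:topo_infection} shows infection by topologically doubly slice knots preserves topological double sliceness (the companion's unknotted sphere is spliced in tube by tube), while the infection changes the knot Floer complex of the relevant surgery curves enough to kill too many vanishing correction terms of $\Sigma_2(\mathcal K_p)$. Separately, your final claim of full linear independence in $\mathcal C_{\mathcal D}$ overreaches: because cancellation of doubly slice summands (Question \ref{question:cancelation}) is open, the metabolizer argument only obstructs orders one and two (see Example \ref{ex:rank6}), which is why the paper states $\Z^\infty$ only as Conjecture \ref{conj:infty}. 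For Theorem \ref{thm:main} itself this last point is harmless, but the topological double sliceness step is a genuine gap that requires the infection idea (or something equivalent) rather than an algebraic splitting plus Freedman.
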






Recall that two knots $K_0$ and $K_1$ are said to be \emph{concordant} if $K_1\#(-K_2)$ is slice (where $-K$ denotes the mirror reverse of $K$) or, equivalently, if there exists a properly embedded cylinder $C\subset S^3\times I$ such that $C\cap S^3\times\{i\}=K_i$ for $i=0,1$.  If $K_0$ and $K_1$ are concordant, we write $K_0\sim K_1$.  Concordance can be studied in either the smooth or the topologically locally flat categories and induces (different) equivalence relations therein.  Let $\mathcal C$ denote the set of knots in $S^3$ up to smooth concordance.  Under connected sum, $\mathcal C$ inherits an abelian group structure and is called the smooth \emph{concordance group}.  Similarly, one can define the topological concordance group $\mathcal C^{top}$ and the algebraic concordance group $\mathcal G$. There exist surjective homomorphisms
$$\mathcal C\stackrel{\psi}{\longrightarrow} \mathcal C^{top}\stackrel{\phi}{\longrightarrow}\mathcal G.$$

These groups have received a large amount of attention, and many interesting theorems and examples have expanded our understanding of their nature; however, there remain many open problems.  For example, it is still not known whether or not $\mathcal C$ and $\mathcal C^{top}$ contain elements of finite order greater than two. On the other hand, Levine \cite{levine:invariants,levine:groups} proved that
$$\mathcal G\cong \Z^\infty\oplus\Z_2^\infty\oplus\Z^\infty_4.$$
For an excellent survey, see \cite{livingston:survey}.

It would be natural to define $K_0$ and $K_1$ to be doubly concordant if $K_0\#(-K_1)$ is doubly slice.  However, it is not known whether this gives an equivalence relation.  The issue is the following unsolved problem.

\begin{question}\label{question:cancelation}
Suppose that $K$ is doubly slice and that $J\#K$ is doubly slice.  Then, must $J$ be doubly slice?
\end{question}

Without an affirmative answer to Question \ref{question:cancelation}, one cannot prove transitivity of the desired equivalence relation.  
Following \cite{stoltzfus:double}, we say that $J$ is \emph{stably doubly slice} if $J\#K$ is doubly slice for some doubly slice knot $K$.  Then, Question \ref{question:cancelation} is simply asking whether or not there exist stably doubly slice knots that are not doubly slice.
Because of these difficulties, we must adopt a different definition of doubly concordant.

Recall that two knots $K_0$ and $K_1$ are concordant if and only if there exist two slice knots $J_0$ and $J_1$ such that $K_0\#J_0=K_1\#J_1$.  This follows from the more common definition of concordant by realizing that the analogue of Question \ref{question:cancelation} for slice knots is true:  If $K$ is slice and $J\#K$ is slice, then $J$ is slice.  With this in mind, we adopt the following definition.

\begin{definition}
Two knots $K_0$ and $K_1$ are smoothly \emph{doubly concordant} if there exist smoothly doubly slice knots $J_0$ and $J_1$ such that $K_0\#J_0=K_1\#J_1$.  We write $K_0\stackrel{\mathcal D}{\sim}K_1$.
\end{definition}

It is straightforward to verify that $\stackrel{\mathcal D}{\sim}$ is an equivalence relation.  We let $\mathcal C_\mathcal D$ denote the set of knots in $S^3$ modulo this relation, which inherits an abelian group structure under connected sum and  is called the smooth \emph{double concordance group}.  Analogously, we can define the topological double concordance group $\mathcal C_\mathcal D^{top}$ and the algebraic double concordance group $\mathcal G_\mathcal D$, and we have surjective homomorphisms
$$\mathcal C_\mathcal D\stackrel{\psi_\mathcal D}{\longrightarrow} \mathcal C^{top}_\mathcal D\stackrel{\phi_\mathcal D}{\longrightarrow}\mathcal G_\mathcal D.$$

The study of these structures is complicated by Question \ref{question:cancelation}.  In Subsection \ref{subsec:question}, we show that, under certain conditions, if $K$ is smoothly stably doubly slice, then the correction terms of $\Sigma_2(K)$ must vanish in the same way as when $K$ is smoothly doubly slice.   In this light, one consequence of Theorem \ref{thm:main} is that $\mathcal T_\mathcal D\not=0$, where $\mathcal T_\mathcal D = \ker(\psi_\mathcal D)$.

In \cite{GRS}, Grigsby, Ruberman, and Strle (building on work of Jabuka and Naik \cite{jabuka-naik:order}) defined invariants that can be used to obstruct a knot from having finite order in $\mathcal C$.  After a slight modification, we show that similar invariants can be applied to $\mathcal C_\mathcal D$.  After restricting our attention to a certain subfamily of the knots from Theorem \ref{thm:main}, we are able to show the following.

\begin{lettertheorem}\label{thm:main2}
There is an infinitely generated subgroup $\mathcal S$ inside $\mathcal T_\mathcal D$, generated by smoothly slice knots whose order in $\mathcal C_\mathcal D$ is at least three.
\end{lettertheorem}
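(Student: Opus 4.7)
The plan is to follow the template of Grigsby, Ruberman, and Strle \cite{GRS}, with their slice obstruction replaced by a doubly slice obstruction built from Heegaard Floer correction terms. First, I would choose a subfamily $\{K_{n_i}\}_{i\in\N}$ of the knots of Theorem \ref{thm:main} for which the orders $|H_1(\Sigma_2(K_{n_i}))|$ are pairwise coprime. This coprimality ensures that, for any connected sum $J=\#_i m_i K_{n_i}$, the $\spinc$ structures on $\Sigma_2(J)=\#_i m_i\Sigma_2(K_{n_i})$ factor as tuples of $\spinc$ structures on the summands, correction terms add across summands, and the linking form on $H_1(\Sigma_2(J))$ splits orthogonally.

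The second step is to upgrade the GRS invariants to the doubly slice setting. In \cite{GRS}, slice obstructions arise because $\Sigma_2(nK)$ bounds a rational homology ball when $nK$ is smoothly slice, yielding a single Lagrangian metabolizer of the linking form on which the $d$-invariants must vanish. The doubly slice analogue is that if $nK$ is smoothly doubly slice then $\Sigma_2(nK)$ embeds in $S^4$ and splits it into two rational homology balls, producing two \emph{transverse} Lagrangian metabolizers, each with its own vanishing condition. With this stronger obstruction in place, infinite generation of $\mathcal S = \langle[K_{n_i}]\rangle\leq\mathcal T_\mathcal D$ reduces to building, for each $i$, a homomorphism from $\mathcal C_\mathcal D$ to an abelian group $A_i$ that vanishes on $[K_{n_j}]$ for $j\neq i$ and sends $[K_{n_i}]$ to an element of order at least three. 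The coprimality from step one automatically forces vanishing on summands supported away from $K_{n_i}$; the remaining content is showing both that $[K_{n_i}]$ is nontrivial under this map and that $2[K_{n_i}]$ is also nontrivial, the latter being precisely the order-at-least-three assertion.

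I expect the order-at-least-three computation to be the main technical obstacle. The $d$-invariants of $\Sigma_2(K_{n_i})\#\Sigma_2(K_{n_i})$ are pairwise sums of $d$-invariants of $\Sigma_2(K_{n_i})$ over pairs of $\spinc$ structures, and the conjugation symmetries inherent in the family of Theorem \ref{thm:main} tend to cancel these sums on the most natural metabolizer candidates. Breaking this symmetry is exactly what forces the passage to a carefully chosen subfamily, and controlling the \emph{sign profile} of the correction terms across the subfamily, rather than only their nonvanishing, is where the real computational work will go.
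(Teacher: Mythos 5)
Your overall architecture matches the paper's: GRS-type $d$--invariant obstructions upgraded from one metabolizer to two transverse metabolizers coming from the decomposition $S^4=U_1\cup_M U_2$; a subfamily with pairwise coprime determinants (the paper uses determinant $p^2$ over distinct odd primes $p$, so that $\frak D_{p_i}$ only sees the $i$th summand); and a sign-controlled subfamily chosen to defeat the conjugation symmetry (the paper takes $k_p=\lceil\frac{p+6}{12}\rceil$ copies of the Whitehead double precisely so that the subgroup sums $S_{G_a}$ are strictly negative for every order-$p$ subgroup except one). However, there is a genuine gap in how you pass from the obstruction to statements about order in $\mathcal C_\mathcal D$. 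Because Question \ref{question:cancelation} is open, $[K]=0$ in $\mathcal C_\mathcal D$ does not mean $K$ is doubly slice; it means $K$ is \emph{stably} doubly slice, i.e.\ $K\#J$ is doubly slice for some doubly slice $J$. Your obstruction, as described, rules out $\#_nK$ being smoothly doubly slice for every $n$, but it says nothing about $K\#J$ for an arbitrary doubly slice $J$ whose branched double cover you do not control. So it does not by itself show $[K]\neq 0$, let alone $2[K]\neq 0$.

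The paper closes this gap with a separate algebraic ingredient your proposal omits: a Kervaire-style cancellation lemma for linking triples $(H_1(\Sigma_2(K)),\lambda,d)$ (Lemma \ref{lemma:hyp_cancel} and Corollary \ref{coro:rank4}), showing that if $(A,\mu,f)$ and $(A\oplus B,\mu\oplus\nu,f\oplus g)$ are hyperbolic then $(B,\nu,g)$ is hyperbolic provided each homogeneous $p$--group component of $B$ has rank at most $4$ --- and Example \ref{ex:rank6} shows this fails already at rank $6$. This is exactly why the generators are built to have determinant $p^2$ (rank $2$, giving nontriviality), why order two can be excluded (the double has rank $4$), and why the method stops at ``order at least three'' rather than infinite order. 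Relatedly, packaging the obstruction as a homomorphism $\mathcal C_\mathcal D\to A_i$ overreaches: the $\frak D_p$-type invariants are minima over collections of metabolizers and are not additive under connected sum, so no such homomorphism is available from this machinery; the paper instead argues directly with the non-additive obstruction, which is precisely why Conjecture \ref{conj:infty} remains open.
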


One would like to say that the knots in $\mathcal S$ have infinite order in $\mathcal C_\mathcal D$.  Unfortunately, due to Question \ref{question:cancelation}, we can only obstruct order one and order two.

\begin{letterconjecture}\label{conj:infty}
The subgroup $\mathcal S\subset \mathcal T_\mathcal D$ is isomorphic to $\Z^\infty$.
\end{letterconjecture}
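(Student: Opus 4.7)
The plan is to upgrade the partial obstruction underlying Theorem \ref{thm:main2}---which currently only rules out orders one and two in $\mathcal C_\mathcal D$---to one that detects every nontrivial $\Z$-linear combination of the generators, and thereby identifies $\mathcal S$ with $\Z^\infty$. The framework continues to be the correction terms of the double branched covers $\Sigma_2(K)$, exploiting that $\Sigma_2(K_0\#K_1)\cong\Sigma_2(K_0)\#\Sigma_2(K_1)$, that $d$-invariants are additive under connected sum, and that for a smoothly doubly slice knot the resulting spectrum of correction terms must vanish along a metabolizer of the linking form on $H_1(\Sigma_2(K);\Z)$.

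The first step would be to promote the result of Subsection \ref{subsec:question} to a fully general \emph{stable-vanishing} theorem: whenever there exists a smoothly doubly slice $J$ with $K\#J$ smoothly doubly slice, the correction terms of $\Sigma_2(K)$ must already vanish along a metabolizer of its linking form, with no auxiliary hypotheses on $K$ or $J$. Granted this, Conjecture \ref{conj:infty} reduces to a purely computational claim: for the chosen family $\{K_i\}\subset\mathcal S$, no nonzero integer combination $K = \#_i\, K_i^{\# n_i}$ (with $K_i^{\# n_i}$ meaning the $n_i$-fold connected self-sum, using mirror reverses for negative $n_i$) admits such a metabolizer-vanishing pattern on $\Sigma_2(K)$. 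By additivity of $d$-invariants this becomes a concrete linear-algebra question about the correction-term data attached to each $\Sigma_2(K_i)$.

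The second step would be to extract an infinite subfamily of the knots from Theorem \ref{thm:main2} whose double branched covers have first homology groups of pairwise coprime order---or, more weakly, with disjoint prime support on the linking form. For such a family, any metabolizer of the linking form of $\#_i\, \Sigma_2(K_i)^{\# n_i}$ splits as an orthogonal sum of metabolizers on each summand $\Sigma_2(K_i)^{\# n_i}$, reducing the problem to controlling each $n_i K_i$ individually; the obstructions underlying Theorem \ref{thm:main2}, suitably strengthened via step one, would then apply factor by factor. The prime-localized $d$-invariant bookkeeping of Jabuka-Naik and Grigsby-Ruberman-Strle provides a workable template for implementing this split.

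The main obstacle is clearly the first step. Proving the stable-vanishing upgrade in full generality appears essentially as deep as settling Question \ref{question:cancelation} in the affirmative at the level of correction terms; Subsection \ref{subsec:question} achieves it only under ancillary hypotheses. A realistic fallback would be to verify directly, using the explicit surgery descriptions of the $K_i$, that any hypothetical smoothly doubly slice $J$ for which $K_i^{\# n}\#J$ is doubly slice would force a contradiction among the correction terms of $\Sigma_2(J)$ itself; this avoids a universal cancellation theorem but requires delicate combinatorial control of the $d$-invariant spectra. Either way, the genuine difficulty is the interaction between the correction-term spectrum of $K_i^{\# n}$ and hypothetical filler summands---the precise form Question \ref{question:cancelation} takes in this context.
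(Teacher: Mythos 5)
There is a genuine gap, and it is worth being clear that the statement you are trying to prove is stated in the paper as Conjecture~\ref{conj:infty}, not as a theorem: the paper explicitly does not prove it, and records that an affirmative answer to Question~\ref{question:cancelation} would imply it. Your proposal correctly locates the difficulty in the same place, but your ``step one'' --- a fully general stable-vanishing theorem asserting that if $K\#J$ is smoothly doubly slice for some smoothly doubly slice $J$ then the correction terms of $\Sigma_2(K)$ already vanish on a metabolizer --- is not something you prove, and you concede it is essentially equivalent to the open Question~\ref{question:cancelation} at the level of correction terms. A plan whose first step is the open problem is not a proof. Moreover, the paper shows that this step cannot be carried out by linking-triple formalism alone: Lemma~\ref{lemma:hyp_cancel} and Corollary~\ref{coro:rank4} give the cancellation only when the relevant homogeneous $p$--group component has rank at most $4$, and Example~\ref{ex:rank6} exhibits linking triples $(A,\mu,f)$ and $(A\oplus B,\mu\oplus\nu,f\oplus g)$ both hyperbolic with $(B,\nu,g)$ not hyperbolic in rank $6$. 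Since $H_1(\Sigma_2(\#_n\mathcal K_{p,k_p}))$ has $p$--rank $2n$, this is exactly why the paper can obstruct only orders one and two (via Proposition~\ref{prop:stable_terms}(2)) and cannot promote Theorem~\ref{thm:main2} to infinite order. Any proof of the conjecture must inject genuinely new input beyond the $d$--invariant/linking-form package.

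Your proposed fallback --- analyzing the hypothetical doubly slice filler $J$ directly --- runs into the same wall: $J$ is an arbitrary smoothly doubly slice knot, so $\Sigma_2(J)$ contributes an arbitrary hyperbolic linking triple $B$, and Example~\ref{ex:rank6} shows that the combined triple on $\Sigma_2(\#_n\mathcal K_{p,k_p})\#\Sigma_2(J)$ can be hyperbolic without the $\mathcal K$--part being metabolic once $n\geq 3$. The coprimality reduction in your second step is fine (the paper already uses the splitting of linking forms over homogeneous $p$--components, following \cite{wall:linking}), and the sign issues for negative $n_i$ are manageable, but neither repairs step one. In short: your outline reproduces the paper's own strategy and its own obstruction, and does not close the gap that makes this a conjecture rather than a theorem.
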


We have the following corollary to Theorem \ref{thm:main}.  

\begin{lettercorollary}\label{coro:embed}
There exists an infinite family of rational homology 3--spheres that embed in $S^4$ topologically, but not smoothly.
\end{lettercorollary}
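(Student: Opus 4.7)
The plan is to transfer the smooth versus topological double-sliceness dichotomy of Theorem A into an embedding dichotomy for the associated double branched covers. Let $\{K_n\}$ be the infinite family of knots produced by Theorem A, and set $Y_n = \Sigma_2(K_n)$, the double cover of $S^3$ branched along $K_n$. Because each $K_n$ is (topologically) doubly slice, and doubly slice knots have nonzero determinant, each $Y_n$ is a rational homology 3-sphere.

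For the topological embedding, I would use the topologically locally flat, unknotted 2-sphere $\kappa_n\subset S^4$ with $\kappa_n\cap S^3=K_n$ witnessing the topological double sliceness of $K_n$. The topological double cover of $S^4$ branched along $\kappa_n$ is again $S^4$, and it decomposes along $Y_n$ as the union of the two topological double covers of $B^4_\pm$ branched along the slice disks $\kappa_n\cap B^4_\pm$. The branch locus of this splitting is a topologically locally flat embedding $Y_n\hookrightarrow S^4$.

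For the smooth non-embedding, suppose toward a contradiction that $Y_n$ embeds smoothly in $S^4$. Then $S^4=V^+\cup_{Y_n}V^-$ with each $V^\pm$ a smooth rational homology 4-ball, and Mayer--Vietoris forces a splitting $H^2(Y_n;\Z)=G^+\oplus G^-$ into complementary metabolizers for the linking form, where $G^\pm$ is the kernel of the restriction map from $V^\pm$. The correction terms $d(Y_n,\mathfrak{s})$ then vanish on the affine cosets of $\spinc$ structures that extend over $V^\pm$. This is exactly the configuration of complementary metabolizers with vanishing $d$-invariants that a smooth doubly-slicing of $K_n$ would produce, and such a configuration is precisely what the proof of Theorem A rules out. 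Hence no such smooth embedding of $Y_n$ can exist. Finally, the $d$-invariants used to distinguish the $K_n$ up to smooth double concordance are intrinsic invariants of the three-manifolds $Y_n$, so infinitely many of the $Y_n$ are pairwise non-diffeomorphic and the family is genuinely infinite.

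The main obstacle is verifying that the $d$-invariant argument in Theorem A genuinely rules out \emph{any} pair of complementary metabolizers with vanishing correction terms, not merely the specific pair arising from a smooth double-slicing of $K_n$. If the obstruction as formulated in Theorem A is sensitive to additional structure coming from the two slice disks (for example, a symmetry between $G^+$ and $G^-$ induced by an ambient involution), one would need to reinspect the proof to confirm that it in fact obstructs the weaker condition of a smooth embedding of $Y_n$ in $S^4$. The analysis above---that a smooth embedding produces exactly such a pair of metabolizers with vanishing $d$-invariants---indicates that the embedding obstruction and the double-slice obstruction do in fact coincide, so this reinspection should be routine.
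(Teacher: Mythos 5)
Your proposal is correct and follows essentially the same route as the paper: the manifolds are the branched double covers $\mathcal Z_{p,k_p}=\Sigma_2(\mathcal K_{p,k_p})$, which embed topologically because the knots are topologically doubly slice, and which cannot embed smoothly because Theorem \ref{thm:hyp_corr_terms} is stated directly as an obstruction to smooth embeddings of rational homology spheres in $S^4$ (requiring at least $2n-1$ vanishing correction terms), while Corollary \ref{coro:terms} shows only $2p-2k_p-1$ vanish. The concern raised in your final paragraph is therefore moot: the paper's obstruction is a pure count of vanishing $d$--invariants over all of $\spinc(M)$, insensitive to which metabolizers arise, so it obstructs any smooth embedding and not merely those induced by a double-slicing.
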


Note that these manifolds are not integral homology spheres. An affirmative answer to Question \ref{question:cancelation} would imply Conjecture \ref{conj:infty}.  If the Conjecture \ref{conj:infty} is false, then there are knots in $\mathcal S$ whose branched double covers do not smoothly embed in $S^4$, but do stably embed smoothly in $S^4$.  See \cite{budney-burton:embeddings} for a survey concerning 3--manifold embeddings in $S^4$.



\subsection*{Organization}
In Section \ref{section:background}, we give a brief outline of the proofs of Theorems \ref{thm:main} and \ref{thm:main2} and give a background overview of the relevant theories.  In Section \ref{section:geometry}, we give the construction of the pertinent family of knots and prove that they are topologically doubly slice.  We also introduce and discuss the 3--manifolds and 4--dimensional cobordisms that are used in the proof of Theorem \ref{thm:main}, discuss the sub-family of knots used to prove Theorem \ref{thm:main2}, and address the subtlety of Question \ref{question:cancelation}.  In Section \ref{section:HF}, we recall the pertinent aspects of Heegaard Floer theory.  In Section \ref{section:calculations}, we perform the calculations necessary to prove that the knots are not smoothly doubly slice.  In Section \ref{section:infinite_order}, we use invariants introduced by Grigsby, Ruberman, and Strle to prove Theorem \ref{thm:main2}.  The proofs of the main theorems rely on calculations of  the knot Floer complexes for certain torus knots and the positive, untwisted Whitehead double of the right-handed trefoil. These facts, some of which are found in \cite{HKL},  are presented in Appendix \ref{appendix}.

\subsection*{Acknowledgements}
The author owes a great deal of gratitude to \c{C}a\u{g}ri Karakurt and Tye Lidman, who generously shared their insight and knowledge of Heegaard Floer homology on numerous occasions and whose comments and ideas throughout this project were invaluable.  The author would also like to thank his advisor, Cameron McA. Gordon, for his continued support and guidance and for freely sharing his expertise and comprehensive knowledge of all things knot theoretical.

\begin{figure}
\centering
\includegraphics[scale = .75]{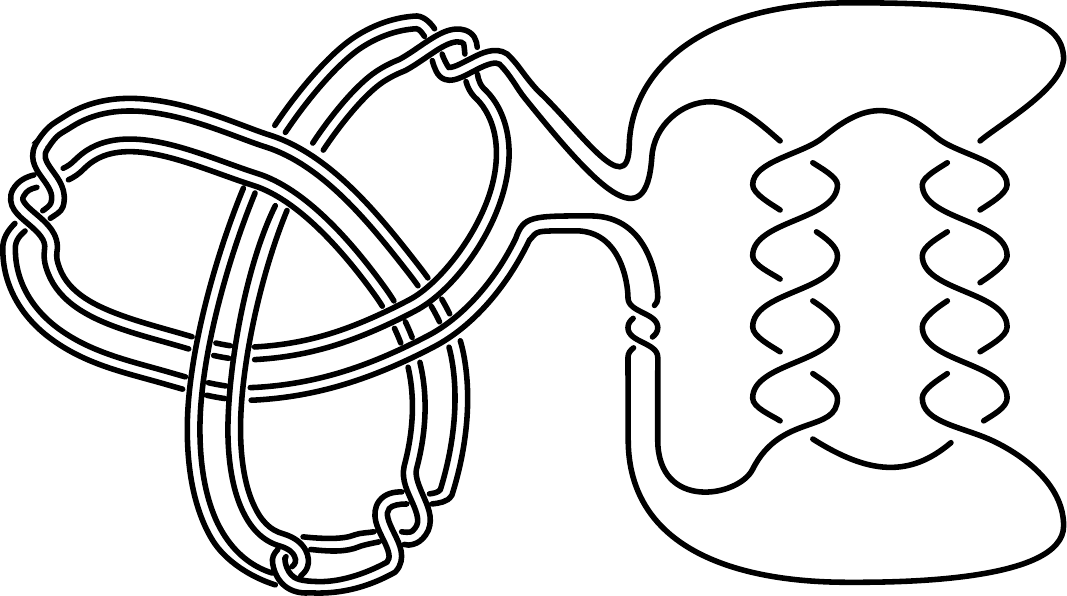}
\caption{One member of the family $\mathcal K_p$; here, $p=5$.}
\label{fig:InfectedKnotSmall}
\end{figure}


\section{Background and outline of proof}\label{section:background}

In Section \ref{section:geometry}, we construct the knots $\mathcal K_p$ for odd primes $p$, and prove that they are topologically doubly slice. 
(See Figure \ref{fig:InfectedKnotSmall} for an example.)

The most difficult task of this paper is showing that the $\mathcal K_p$ are not smoothly doubly slice.  This is accomplished by studying the double covers of $S^3$ branched along these knots.  If $K$ is a smoothly doubly slice knot, then it is the intersection of a smoothly unknotted 2--sphere $\kappa\subset S^4$ with the standard $S^3\subset S^4$.  
So we have $(S^3,K)\subset (S^4,\kappa)$, where the first pair sits as the equator of the second.  Taking the branched double cover, we get $(\Sigma_2(K),K)\subset (S^4,\kappa)$.  This gives a smooth embedding of the branched double cover $\Sigma_2(K)$ of $K$ into $S^4$.  We have proved the following proposition, which first appeared in \cite{gilmer-livingston:embedding}.

\begin{proposition}
If $K$ is a smoothly doubly slice knot, then $\Sigma_2(K)$ embeds smoothly into $S^4$.
\end{proposition}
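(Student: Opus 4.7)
The proof is essentially sketched in the paragraph immediately preceding the statement, so the plan is to formalize that outline. Starting from the assumption that $K$ is smoothly doubly slice, I fix a smoothly unknotted $2$--sphere $\kappa\subset S^4$ with $\kappa\cap S^3=K$, where $S^3$ sits as an equator of $S^4$. The picture is then the inclusion of pairs $(S^3,K)\hookrightarrow (S^4,\kappa)$.

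The key move is to pass to the double cover of $S^4$ branched along $\kappa$. Since $\kappa$ is smoothly unknotted, it bounds a smooth $3$--ball on each side (by the smooth Schönflies theorem in dimension $4$ for unknotted $S^2$'s — or more directly, by the definition of unknotted, $\kappa$ is smoothly isotopic to a standard $S^2\subset S^4$). The double cover of $S^4$ branched along the standard unknotted $S^2$ is diffeomorphic to $S^4$; this is the classical observation that gluing two copies of $B^4$ along the identity on their boundaries in a way that is $2$-to-$1$ over a standard $B^3\subset S^3$ produces $S^4$. Thus the branched cover $\Sigma_2(\kappa)$ is smoothly $S^4$.

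Now consider the preimage of $S^3\subset S^4$ under the branched covering map $\pi\colon S^4\to S^4$. Since $S^3$ meets $\kappa$ transversely in $K$, the restriction of $\pi$ to $\pi^{-1}(S^3)$ is the double cover of $S^3$ branched along $K$; that is, $\pi^{-1}(S^3)=\Sigma_2(K)$. Because $\pi$ is a smooth branched covering and $S^3$ is smoothly embedded, this preimage is a smooth submanifold of the covering $S^4$, yielding the desired smooth embedding $\Sigma_2(K)\hookrightarrow S^4$.

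No step here is really hard — the only thing to be careful about is the identification of the branched double cover of $S^4$ over an unknotted $S^2$ with $S^4$, and the verification that the preimage of the equatorial $S^3$ is indeed $\Sigma_2(K)$ (rather than, say, a disconnected or unbranched cover). Both follow from the standard local model for a branched cover along a smooth codimension-$2$ submanifold, combined with the unknottedness of $\kappa$.
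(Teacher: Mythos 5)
Your proof is correct and follows exactly the paper's argument: pass to the double cover of $S^4$ branched along the unknotted $\kappa$ (which is again $S^4$) and observe that the preimage of the equatorial $S^3$ is $\Sigma_2(K)$. You simply supply more detail than the paper does on the two identifications involved.
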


Thus, we can prove that a knot is not smoothly doubly slice by showing that its branched double cover does not embed smoothly in $S^4$.  To do this, we make use of the correction terms coming from Heegaard Floer homology.  For more details, see Section \ref{section:HF}.  For now, let $M$ denote a closed 3--manifold, and let $\frak s\in\spinc(M)$.  Let $d(M,\frak s)$ denote the correction term associated to the pair $(M,\frak s)$.  The main tool in this paper is the following theorem, which also appears in \cite{donald:embedding} and \cite{gilmer-livingston:embedding} in one form or another.

\begin{theorem}\label{thm:hyp_corr_terms}
Let $M$ be a rational homology 3--sphere that embeds smoothly in $S^4$.  Then $H_1(M)= G_1\oplus G_2$ with $G_1\cong G_2$.  Furthermore, there is an identification $\spinc(M)\cong H^2(M;\Z)\cong H_1(M)$ such that $$d(M,\frak s)=0\hspace{.25in} \forall \frak s\in G_1\cup G_2.$$
In other words, if $|H_1(M)|=n^2$, then at least $2n-1$ of the $n^2$ correction terms associated to $M$ must vanish.
\end{theorem}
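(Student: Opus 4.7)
The plan is to exploit the splitting of $S^4$ into two pieces $W_1, W_2$ along $M$ and apply the Ozsv\'ath-Szab\'o $d$-invariant inequality on each side. Mayer-Vietoris applied to $S^4 = W_1 \cup_M W_2$, combined with the rational homology sphere hypothesis and the vanishing middle homology of $S^4$, gives $H_2(W_i;\Q) = 0$ and an isomorphism $H_1(M) \cong H_1(W_1)\oplus H_1(W_2)$, so each $W_i$ is a rational homology 4--ball. Setting $G_i := \ker(H_1(M)\to H_1(W_i))$, the long exact sequence of $(W_i,M)$ together with Poincar\'e-Lefschetz duality $H_2(W_i,M)\cong H^2(W_i)\cong H_1(W_i)$ yields
\[ 0 \to H_1(W_i) \to H_1(M) \to H_1(W_i) \to 0, \]
so $G_i \cong H_1(W_i)$ and $|G_i|^2 = |H_1(M)|$. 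The Mayer-Vietoris isomorphism restricts to $G_1 \cong H_1(W_2)$ (injective since $G_1 \cap G_2 = 0$, and orders agree), so combining the two sequences yields $H_1(W_1)\cong H_1(W_2)$, $G_1\cong G_2$, and $H_1(M) = G_1 \oplus G_2$.

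Next, fix the basepoint $\mathfrak{s}_0 \in \spinc(M)$ obtained by restricting the unique $\spinc$ structure on $S^4$; by construction it extends over both $W_1$ and $W_2$. Using $\mathfrak{s}_0$ for the torsor identification $\spinc(M) \cong H^2(M;\Z) \cong H_1(M)$ (the latter via Poincar\'e duality), the $\spinc$ structures on $M$ extending over $W_i$ correspond to $\im(H^2(W_i)\to H^2(M))$. A diagram chase in the long exact cohomology sequence of $(W_i, M)$ identifies this image with $G_i \subset H_1(M)$: the key point is that the connecting map $H^2(M) \to H^3(W_i,M)\cong H_1(W_i)$ is Poincar\'e-Lefschetz dual to the inclusion-induced map $H_1(M)\to H_1(W_i)$.

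Finally, for any $\mathfrak{s}\in G_i$, apply the Ozsv\'ath-Szab\'o $d$-invariant inequality to both $W_i$ and $-W_i$. Each is vacuously negative-definite since $b_2(W_i)=0$, and any $\spinc$ extension has $c_1$ torsion (hence $c_1^2 = 0$ rationally), so the two inequalities read $d(M,\mathfrak{s}) \geq 0$ and $d(M,\mathfrak{s})\leq 0$ respectively, forcing $d(M,\mathfrak{s}) = 0$. The count $|G_1\cup G_2| = 2n-1$ is then immediate from $G_1 \cap G_2 = \{0\}$. The main technical obstacle is the diagram chase in paragraph two identifying the extending $\spinc$ structures with the kernel subgroup $G_i$; once that identification is in place, the two-sided $d$-invariant inequality closes the argument.
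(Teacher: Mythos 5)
Your proposal is correct and follows essentially the same route as the paper: decompose $S^4 = W_1\cup_M W_2$ into two rational homology $4$--balls via Mayer--Vietoris, obtain the splitting $H_1(M)\cong G_1\oplus G_2$ with $G_1\cong G_2$ by the Hantzsche-style duality argument, identify the $\spinc$ structures extending over $W_i$ with $G_i$, and conclude $d=0$ there. The only cosmetic differences are that you define $G_i$ as kernels rather than as $H_1(W_i)$ and rederive the vanishing of $d$ from the two-sided negative-semidefinite inequality instead of quoting the rational homology cobordism invariance from \cite{oz-sz:absolute} directly.
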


\begin{proof}
Since $M$ embeds smoothly in $S^4$, we get a decomposition $S^4 = U_1\cup_MU_2$, where $U_i$ is a rational homology 4--ball for $i=1,2$.  Let $G_i = H_1(U_i)$ for $i=1,2$.  By analyzing the Mayer-Vietoris sequence induced by this decomposition, we see that $H_1(M)\cong H_1(U_1)\oplus H_1(U_2)=G_1\oplus G_2$.  The proof that $G_1\cong G_2$ is due to Hantzsche \cite{hantzche}, and is as follows.  By analyzing the relative sequence for $(S^4,U_1)$, we see that $H_1(U_1)\cong H_2(S^4,U_1)$.  By excision, $H_2(S^4,U_1)\cong H_2(U_2,M)$, and by Lefschetz duality, $H_2(U_2,M)\cong H^2(U_2)$.  Finally, by the universal coefficients theorem, $H^2(U_2)\cong H_1(U_2)$ (since $H_1(U_2)$ and $H_2(U_2)$ are both torsion).

Now consider the dual isomorphism $G_1\oplus G_2\cong H^2(M)$, whose restrictions to $G_i$ are induced by the inclusion $M\hookrightarrow U_i$ for $i=1,2$.  Elements in $H^2(M)$ that are in the image of this inclusion from $G_i$ correspond to $\spinc$ structures on $M$ that extend to $\spinc$ structures over $U_i$ for $i=1,2$.  However, for any 3--manifold $Y$ and $\frak s\in\spinc(Y)$, we have that $d(Y,\frak s) = 0$ whenever $(Y,\frak s) = \partial (W,\frak t)$, where $W$ is a rational homology 4--ball and $\frak t$ extends $\frak s$ (see \cite{oz-sz:absolute}).

If follows that $d(M,\frak s)=0$ for any $\frak s\in G_1\cup G_2$, which is a set of cardinality $2n-1$.
\end{proof}

Let $\mathcal Z_p$ denote the double cover of $S^3$ branched along the knot $\mathcal K_p$.  In Section \ref{section:calculations}, we make use of the surgery exact triangle to relate the Heegaard Floer homology of $\mathcal Z_p$ to that of simpler manifolds (manifolds obtained as surgery on knots in $S^3$, to be precise).  Using this set-up, we show in Corollary \ref{coro:terms} that only $2p-3$ of the $p^2$ correction terms associated to $\mathcal Z_p$ vanish.  By Theorem \ref{thm:hyp_corr_terms}, this implies Theorem \ref{thm:main}, as well as Corollary \ref{coro:embed}.

Of course, the statement that at least $2n-1$ of the $n^2$ correction terms must vanish does not use the full strength of Theorem \ref{thm:hyp_corr_terms}, since it makes no use of the group structure of the correction terms.  Jabuka and Naik \cite{jabuka-naik:order} used this group structure to prove that many low crossing knots (whose concordance order was unknown) are not order 4 in $\mathcal C$.  Grigsby, Ruberman, and Strle investigated this concept further in \cite{GRS}, and introduced knot invariants that can be used to obstruct finite concordance order among knots.  We refine one set of these invariants so that they can be used to obstruct order one and order two in the double concordance group, and use them to prove that a family related to the $\mathcal K_p$ generates an infinite rank subgroup in $\mathcal C_\mathcal D$ (see Section \ref{section:infinite_order}).  This proves Theorem \ref{thm:main2}.


\section{Geometric considerations}\label{section:geometry}

In this section, we use the method of infection to construct the knots $\mathcal K_p$ and $\mathcal K_{p,k}$.  We then describe a sufficient condition for a knot to be doubly slice and use it to prove that these knots are topologically doubly slice.  Next, we introduce the 3--manifolds triad that will be used in Section \ref{section:calculations}, and describe the  4--dimensional cobordisms relating them.  Finally, we address Question \ref{question:cancelation}.

\subsection{Infection and the knots $\mathcal K_p$}\label{subsec:infection}\  

Let $\vec\eta = (\eta_1, \ldots, \eta_n)$ be an $n$--component unlink in $S^3$, and choose an open tubular neighborhood $N_i$ of each $\eta_i$ such that $\overline N_i\cap \overline N_j=\emptyset$ for $i\not=j$.  Let $E = S^3 - \cup_{i=1}^n N_i$.  Next, consider a collection of knots $\vec J=(J_1,\ldots, J_n)$, and let $E_{J_i}$ denote the exterior of $J_i$.  Let $M$ be the manifold obtained by gluing $E_{J_i}$ to $E$ along $\partial N_i$ such that the meridian and longitude of $\eta_i$ are identified with the longitude and meridian, respectively, of $J_i$.  This choice of gluing ensures that $M$ is diffeomorphic to $S^3$.

Let $K\subset E$, and let $f:E\to M$ be the natural inclusion.  Then the knot $K_{\vec\eta}(\vec J) = f(K)$ is the result of \emph{infection} on $K$ by $\vec J$ along $\vec \eta$.  In the case when $\vec\eta$ is a knot, we simply write $I_\eta(J)$.  See Figure \ref{fig:InfectedKnots}.  The construction, as given, dates back at least as far as \cite{gilmer:slice}.



\begin{example}\ 
\begin{enumerate}
\item If $n=1$, we recover the satellite construction.  In particular, if $\eta$ is chosen to be a meridian of $K$, then infection of $K$ by $J$ along $\eta$ is simply $K\#J$.
\item If $K\cup\eta$ is the positive Whitehead link (see Figure \ref{fig:WhiteheadAndTrefoil} (b)), then infection of $K$ by $J$ along $\eta$ is the positive, untwisted Whitehead double of $J$, which we denote by $Wh^+(J,0)$.  For example, if $J$ is the right-handed trefoil, then $Wh^+(J,0)$ is shown in Figure \ref{fig:WhiteheadAndTrefoil} (c).
\end{enumerate}
\end{example}

\begin{figure}
\centering
\includegraphics[scale = .75]{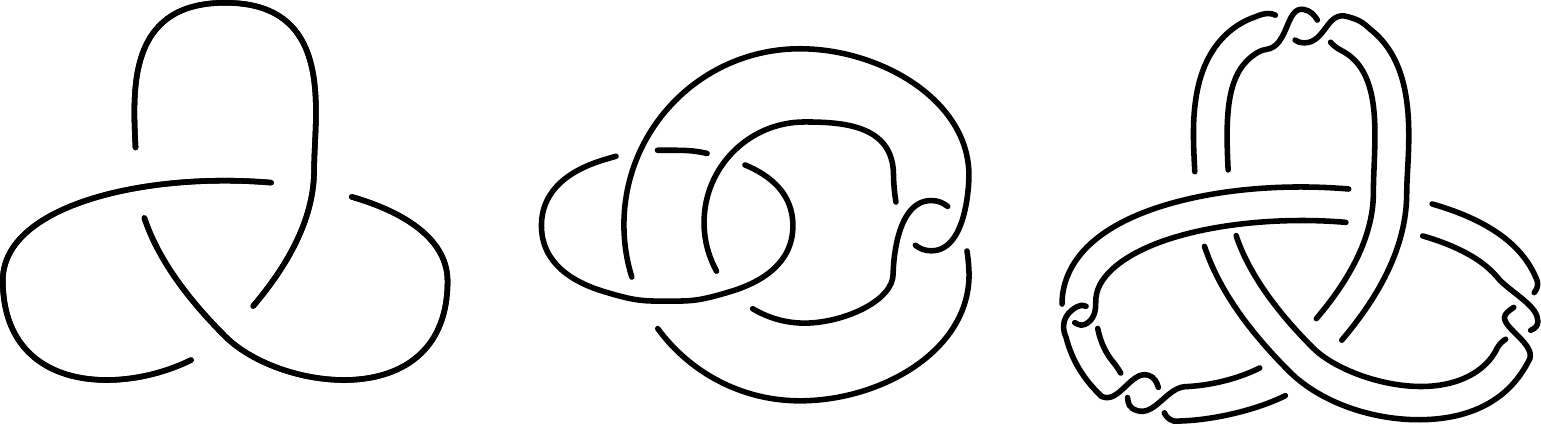}
\put(-293,-15){(a)}
\put(-167,-15){(b)}
\put(-58,-15){(c)}
\caption{(a) The right-handed trefoil, (b) the positive Whitehead link, and (c) the positive, untwisted Whitehead double of the right-handed trefoil.}
\label{fig:WhiteheadAndTrefoil}
\end{figure}

Throughout, we will denote the $(p,q)$--torus knot by $T_{p,q}$ for $2\leq p<|q|$ (see Figure \ref{fig:TorusKnot}).

Let $I_{J,p}$ denote the knot obtained by infecting $T_{2,p}\#(T_{2,-p})$ with $J$ along $\eta$ (see Figure \ref{fig:InfectedKnots}).  
\begin{figure}
\centering
\includegraphics[scale = .75]{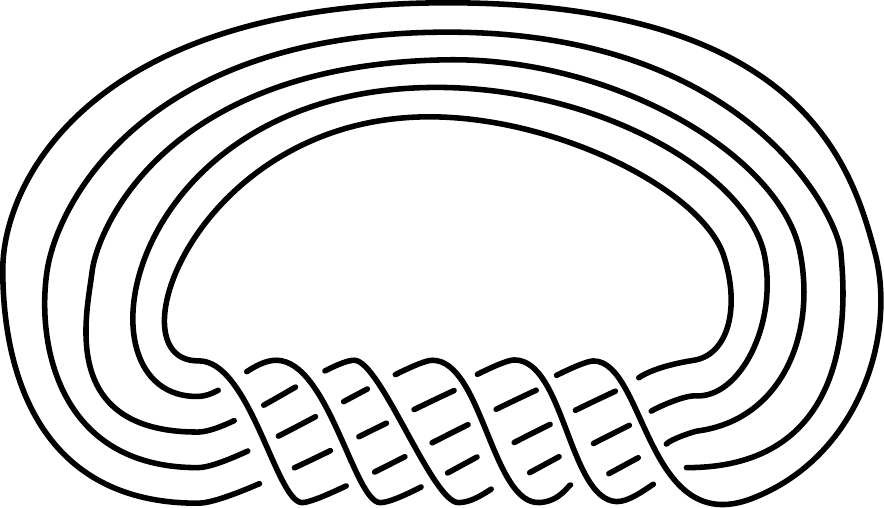}
\caption{An example of the torus knot $T_{p,p+1}$; here $p=5$.}
\label{fig:TorusKnot}
\end{figure}
Let $D$ be the positive, untwisted Whitehead double of the right handed trefoil, and let $\mathcal K_p = I_{D,p}$ for $p$ an odd prime (see Figures \ref{fig:InfectedKnotSmall} and \ref{fig:InfectedKnots}(b)).  Let $\mathcal K_{p,k} =I_{\#_kD,p}$, and note that $\mathcal K_{p,1}=\mathcal K_p$.  The rest of the paper will be devoted to proving that these knots are topologically doubly slice, but not smoothly doubly slice. 

\begin{figure}
\centering
\includegraphics[scale = .75]{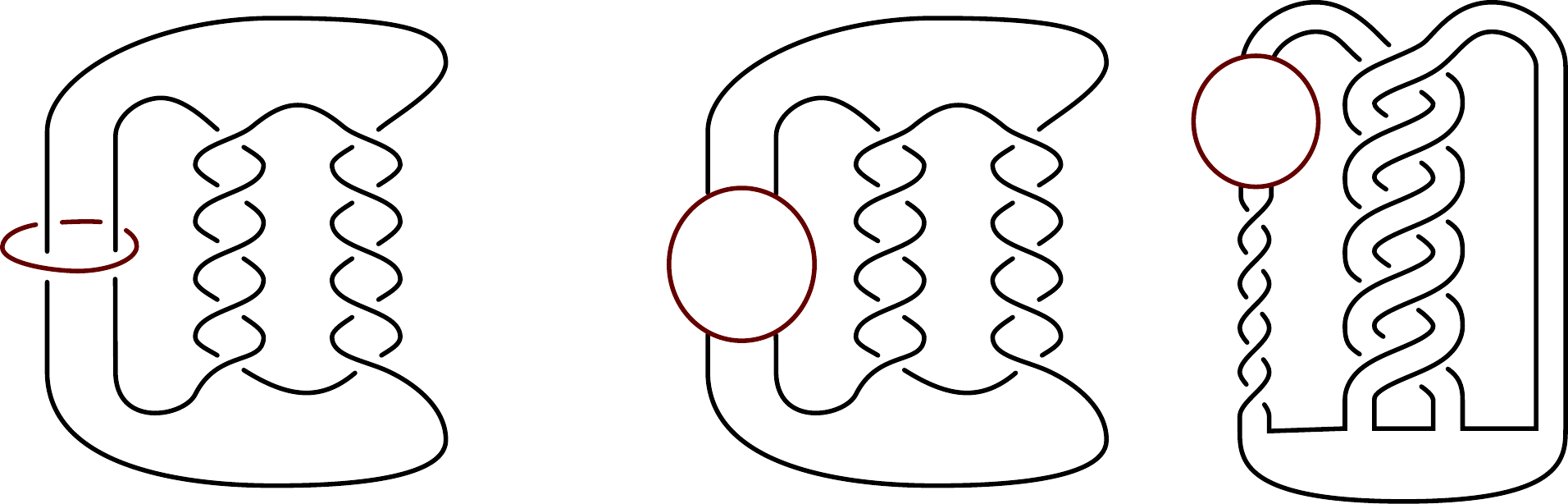}
\put(-333,-15){(a)}
\put(-428,67){\textcolor{Sepia}{\Large$\eta$}}
\put(-125,-15){(b)}
\put(-120,60){\Huge$\sim$}
\put(-227,56){\Huge\textcolor{Sepia}{$J$}}
\put(-91,93){\Huge\textcolor{Sepia}{$J$}}
\caption{(a) The knot $T_{2,p}\#T_{2,-p}$ along with the infection curve $\eta$.  (b) Two descriptions of the result of infecting $T_{2,p}\#T_{2,-p}$ with some knot $J$ along $\eta$.  Here, $p=5$.}
\label{fig:InfectedKnots}
\end{figure}

\subsection{A sufficient condition for double sliceness}\label{subset:first}\ 

In this subsection we will present a sufficient condition for a knot $K$ to be doubly slice that applies when $K$ is obtained by a certain type of infection.  We remark that Donald \cite{donald:embedding} gives a different sufficient condition: one which involves systems of ribbon bands for $K$.

Our criterion will make use of some well-known facts about topologically locally flat surfaces in 4--manifolds that result from the work of Freedman and Quinn \cite{freedman:4manifolds,freedman-quinn}.

\begin{theorem}\label{thm:freedman}\ 
\begin{enumerate}
\item Let $K$ be a knot in $S^3$ with Alexander polynomial $\Delta_K=1$.   Then, there exists a topologically locally flat disk $D$ properly embedded in $B^4$ with $\partial D=K$ and $\pi_1(B^4-D)\cong\Z$.

\item Let $\kappa$ be a topologically locally flat 2--knot in $S^4$ with $\pi_1(S^4-\kappa)\cong\Z$.  Then, there exists an embedded 3--ball $B\subset S^4$ with $\partial B=\kappa$.

\end{enumerate}
\end{theorem}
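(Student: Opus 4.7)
The plan is to deduce both statements from Freedman's theory of topological $4$-manifolds with infinite cyclic fundamental group, as developed in Freedman--Quinn. Neither statement is really about knots per se; in each case the task is to identify a certain topological $4$-manifold with a standard model using the fact that $\Z$ is a ``good'' group.

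For (1), the strategy is to build a topological $4$-manifold $W$ with $\partial W = M_K = S^3_0(K)$, $\pi_1(W) \cong \Z$, and $H_*(W;\Z) \cong H_*(S^1;\Z)$, and then cap it off. The hypothesis $\Delta_K = 1$ is precisely what is needed to make the rational Alexander module of the infinite cyclic cover of $M_K$ vanish; this is the algebraic input that Freedman's surgery-theoretic disk embedding theorem requires in order to run over $\pi = \Z$. Once such a $W$ is in hand, one attaches a topological $2$-handle to $W$ along a meridian $\mu$ of $K$ (which is nullhomotopic in $W$ because the longitude already generates $\pi_1(W)$) with framing $0$; the result is a simply connected integral homology $4$-ball, hence, by Freedman's theorem, homeomorphic to $B^4$. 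The cocore of the attached $2$-handle is the desired locally flat slice disk $D$, and $B^4 \setminus D$ deformation retracts onto $W$, so $\pi_1(B^4 \setminus D) \cong \Z$.

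For (2), the plan is to analyze the exterior $X = S^4 \setminus \nu(\kappa)$ directly. By Alexander duality one has $H_*(X;\Z) \cong H_*(S^1;\Z)$, and by hypothesis $\pi_1(X) \cong \Z$ with a meridian of $\kappa$ as generator. Thus $X$ is a compact topological $4$-manifold with the same fundamental group, homology, and boundary as $S^1 \times D^3$. Freedman's classification of topological $4$-manifolds with $\pi_1 = \Z$ then produces a homeomorphism $X \cong S^1 \times D^3$, under which $\nu(\kappa)$ becomes a standard tubular neighborhood of the unknotted $2$-sphere that fills in the other side of the standard splitting $S^4 = (S^1 \times D^3) \cup (D^2 \times S^2)$. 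Hence $\kappa$ is topologically unknotted and bounds a locally flat $3$-ball on either side.

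The main obstacle in both parts is the Freedman machinery itself, in particular the disk embedding theorem for the group $\Z$; that is the deep analytic input. Granting it, part (1) reduces to a translation between the vanishing of the Alexander polynomial and the existence of a boundary-prescribed $4$-manifold with the cohomology of $S^1$, together with a routine handle-attachment that turns $W$ into $B^4$; part (2) reduces to the statement that, for the good group $\Z$, the pair $(\pi_1, H_*)$ determines a compact topological $4$-manifold with boundary $S^1 \times S^2$ up to homeomorphism, forcing $X$ to be standard.
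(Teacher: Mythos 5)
The paper offers no proof of this theorem: it is quoted verbatim as a consequence of the work of Freedman and Freedman--Quinn, with citations, so there is no in-paper argument to compare against. Your outline is the standard route from that literature, but it contains one step that, as written, would fail.

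In part (1) you attach the $2$-handle to $W$ along a meridian $\mu$ of $K$ and justify this by saying $\mu$ is ``nullhomotopic in $W$ because the longitude already generates $\pi_1(W)$.'' This is backwards. In $M_K=S^3_0(K)$ the longitude bounds the capped-off Seifert surface and hence dies in $H_1(M_K)\cong\Z$, which is generated by the meridian; since the inclusion $M_K\hookrightarrow W$ induces an isomorphism on $H_1$ and $\pi_1(W)\cong\Z$ is abelian, it is $\mu$ that generates $\pi_1(W)$. That is exactly what the construction requires: if $\mu$ were nullhomotopic, then attaching a $2$-handle along it would leave $\pi_1\cong\Z$ and create a new $H_2$ class, so the resulting manifold would be neither simply connected nor a homology $4$-ball, and Freedman's recognition of $B^4$ could not be invoked. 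With the roles of meridian and longitude corrected (the meridian normally generates $\pi_1(W)$, the longitude is the curve that dies), the capping argument is the standard one, and the deep input is indeed the existence of the ``homology $S^1\times D^3$'' $W$ with $\pi_1\cong\Z$ bounding $M_K$, which is where $\Delta_K=1$ enters.

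Part (2) is essentially right but elides a necessary step: Freedman's classification for the good group $\Z$ identifies $X=S^4\setminus\nu(\kappa)$ with $S^1\times D^3$ only once one knows the \emph{homotopy type} of $X$ is that of $S^1$, and this does not follow formally from $\pi_1(X)\cong\Z$ together with $H_*(X)\cong H_*(S^1)$. One must additionally verify $\pi_2(X)=0$, i.e., that the universal (equivalently, infinite cyclic) cover of $X$ has trivial homology; this is obtained from duality in the infinite cyclic cover. Granting that, $X\simeq S^1$, the identification $X\cong S^1\times D^3$ follows, and the standard splitting $S^4=(S^2\times D^2)\cup(S^1\times D^3)$ exhibits $\kappa$ as the unknot, which bounds a locally flat $3$-ball.
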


There is a simple corollary to this theorem that will be useful below (cf. \cite{kirby-melvin,gordon-sumners}).

\begin{corollary}\label{coro:freedman}
Let $K$ be a knot in $S^3$ with $\Delta_K=1$.  Then, $K$ is topologically doubly slice.
\end{corollary}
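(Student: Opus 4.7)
The plan is to glue together two topologically locally flat slice disks for $K$ to produce a 2-sphere $\kappa\subset S^4$ containing $K$ as its equator, and then apply part (2) of Theorem \ref{thm:freedman} to show that $\kappa$ is unknotted.

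More precisely, since $\Delta_K=1$, part (1) of Theorem \ref{thm:freedman} furnishes a properly embedded topologically locally flat disk $D_1\subset B^4_+$ with $\partial D_1 = K$ and $\pi_1(B^4_+\setminus D_1)\cong \Z$. Applying the same theorem to $K$ viewed as the boundary of a second 4--ball $B^4_-$ (using that $\Delta_K=1$ again), one obtains a second such disk $D_2\subset B^4_-$. I would then glue $B^4_+$ to $B^4_-$ along their common boundary $S^3$ via the identity, matching $\partial D_1$ to $\partial D_2$. The result is $S^4 = B^4_+\cup_{S^3} B^4_-$, containing a locally flat 2--sphere $\kappa = D_1\cup_K D_2$, and by construction $\kappa\cap S^3 = K$.

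The key step is to verify that $\pi_1(S^4\setminus \kappa)\cong \Z$, so that part (2) of Theorem \ref{thm:freedman} applies and $\kappa$ bounds a 3--ball, i.e., is unknotted. For this I would apply van Kampen's theorem to the decomposition
$$S^4\setminus\kappa = (B^4_+\setminus D_1)\cup_{S^3\setminus K}(B^4_-\setminus D_2).$$
Each piece has fundamental group $\Z$, normally generated by a meridian of the disk. The inclusion $S^3\setminus K\hookrightarrow B^4_\pm\setminus D_\pm$ sends a meridian of $K$ to such a generator, so the induced maps $\pi_1(S^3\setminus K)\to \Z$ are surjective and, in fact, equal under the two inclusions (since both are the abelianization map sending a meridian to $1$). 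The amalgamated product then collapses to a single $\Z$.

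I don't expect any serious obstacles: the main content is all packaged in Theorem \ref{thm:freedman}, and the only thing to check is the fundamental-group computation via van Kampen, which is routine once one observes that meridians of $K$ generate $\pi_1(S^3\setminus K)$ modulo the commutator subgroup and that the $\Z$--valued meridional maps agree on both sides. Once $\pi_1(S^4\setminus\kappa)\cong\Z$ is established, part (2) of Theorem \ref{thm:freedman} yields a 3--ball $B\subset S^4$ with $\partial B=\kappa$, certifying that $\kappa$ is topologically unknotted and hence that $K$ is topologically doubly slice.
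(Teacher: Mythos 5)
Your proposal is correct and follows essentially the same route as the paper: the paper doubles the pair $(B^4,D)$ supplied by part (1) of Theorem \ref{thm:freedman} (which is your construction with $D_2$ taken to be the mirror of $D_1$), computes $\pi_1(S^4\setminus\kappa)\cong\Z$ via van Kampen using the surjectivity of $\pi_1(S^3\setminus K)\to\pi_1(B^4\setminus D)$, and then invokes part (2) to conclude $\kappa$ is unknotted. No gaps.
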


\begin{proof}
By Theorem \ref{thm:freedman}, we know that $K$ bounds a topological disk $D\subset B^4$ whose complement has fundamental group $\Z$.  Moreover, we have $\pi_1(S^3-K)\to\pi_1(B^4-D)\cong \Z$ is surjective.  If we double the pair $(B^4, D)$ along the boundary $(S^3,K)$, then we get $(S^4, \kappa)$, where $\kappa$ is a topological 2--knot.  It follows that $\pi_1(S^4-\kappa)\cong \Z$ by van Kampen's theorem (this uses the surjectivity).  Thus, $\kappa$ is topologically unknotted with $\kappa\cap S^3=K$, so $K$ is topologically doubly slice.
\end{proof}

\begin{proposition}\label{prop:topo_infection}
Let $K$ be a topologically doubly slice knot and let $K'=I_{\vec\eta}(\vec J)$ be the result of infecting $K$ with the knots $J_i$, each of which is topologically doubly slice.  Then $K'$ is topologically doubly slice.
\end{proposition}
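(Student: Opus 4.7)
By induction on the number of components of $\vec\eta$, it suffices to handle the case of a single infection curve $\eta$ and a single topologically doubly slice knot $J$. Let $\kappa,\kappa_J\subset S^4$ be unknotted locally flat 2-spheres witnessing the topological double sliceness of $K$ and $J$, with $\kappa\cap S^3=K$ and $\kappa_J\cap S^3=J$. The goal is to construct an unknotted locally flat 2-sphere $\kappa'\subset S^4$ with $\kappa'\cap S^3=K'$.

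The plan is a four-dimensional infection performed in a collar $S^3\times[-1,1]\subset S^4$ of the equator. Inside this collar, the region $\nu(\eta)\times[-1,1]$ is a $4$-dimensional solid torus which, since $\eta\cap K=\emptyset$, lies entirely in the complement of $\kappa$. I would replace this region by $E_J\times[-1,1]$, glued along its boundary torus by the very same meridian--longitude swap that defines the 3D infection. On each equatorial slice $S^3\times\{t\}$ this is precisely the 3D infection, so the modified equator $\tilde S^3\cong S^3$ now contains $K'$ in place of $K$, and the modified ambient space $\tilde S^4=B^4_+\cup(\tilde S^3\times[-1,1])\cup B^4_-$ is still $S^4$. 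Since the modification is disjoint from $\kappa$, the 2-sphere $\kappa$ persists in $\tilde S^4$; I set $\kappa':=\kappa$ and note that $\kappa'\cap\tilde S^3$ coincides with $K$ as a point-set, but now has knot type $K'$ because the ambient equatorial 3-sphere has been altered.

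The principal obstacle is to verify that $\kappa'$ is topologically unknotted in $\tilde S^4$. By Theorem~\ref{thm:freedman}(2), it suffices to show $\pi_1(\tilde S^4\setminus\kappa')=\Z$. Decomposing $\tilde S^4\setminus\kappa'$ along the collar and applying van Kampen yields
\[
\pi_1(\tilde S^4\setminus\kappa')\;\cong\;\pi_1(B^4_+\setminus\Delta_+^K)\,*_{\pi_1(S^3\setminus K')}\,\pi_1(B^4_-\setminus\Delta_-^K),
\]
where $\Delta_\pm^K:=\kappa\cap B^4_\pm$. The task is therefore to show that replacing the amalgamating group $\pi_1(S^3\setminus K)$ by $\pi_1(S^3\setminus K')$ (the effect of the 3D infection on the knot group) still forces the amalgamated product to collapse to $\Z$. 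This is precisely where the hypothesis that $J$ is topologically doubly slice enters: the analogous van Kampen decomposition of $\pi_1(S^4\setminus\kappa_J)=\Z$ supplies the additional relations, transferable through the infection, that reduce the new amalgamated product back to $\Z$. Once this computation is in hand, Theorem~\ref{thm:freedman}(2) produces a locally flat 3-ball bounded by $\kappa'$ in $\tilde S^4$, showing $\kappa'$ is unknotted and hence that $K'$ is topologically doubly slice.
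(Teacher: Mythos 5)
There is a genuine gap, and it occurs at the very first step: the four-dimensional surgery you propose is not well-defined. The region you want to excise, $\nu(\eta)\times[-1,1]\cong S^1\times D^2\times I\cong S^1\times D^3$, has boundary $S^1\times S^2$, whereas the piece you want to insert, $E_J\times[-1,1]$, has boundary equal to the double of $E_J$ along $\partial E_J$, which for nontrivial $J$ contains an incompressible torus and is not $S^1\times S^2$. Gluing ``along the boundary torus'' only accounts for the vertical part $\partial\nu(\eta)\times I\cong\partial E_J\times I$ of these boundaries; the horizontal parts $\nu(\eta)\times\{\pm1\}$ (solid tori, which were attached to the caps $B^4_\pm$) and $E_J\times\{\pm1\}$ (knot exteriors) do not match, so the collar cannot be reassembled with the untouched caps. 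Equivalently: a levelwise infection of every slice $S^3\times\{t\}$ does not assemble into a surgery on $S^4$, because the fibered replacement changes the interface with $B^4_\pm$.

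Even granting a repaired ambient construction, the second half of your argument is not a proof: you reduce unknottedness of $\kappa'$ to showing $\pi_1(\tilde S^4\setminus\kappa')\cong\Z$ and then assert that the relations needed to collapse the amalgamated product are ``transferable through the infection'' from $\pi_1(S^4\setminus\kappa_J)=\Z$, without carrying out the computation. Note that for a general doubly slice $K$ the half-disk groups $\pi_1(B^4_\pm\setminus\Delta^K_\pm)$ need not be $\Z$ (only the full complement $S^4\setminus\kappa$ has $\pi_1\cong\Z$), and the infection enlarges the amalgamating group by $\pi_1(E_J)$; there is no identified mechanism forcing the new pushout to be $\Z$. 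The paper avoids $\pi_1$ entirely: it isotopes each $\eta_i$ to bound a small disk meeting $K$ in $m_i$ points, removes from $(S^4,\kappa)$ a small $4$--ball meeting $\kappa$ in a trivial $m_i$--disk tangle, and glues back the $4$--ball tangle formed by $m_i$ parallel copies of the unknotted sphere $\mathcal J_i$ (with a small ball removed); since one trivial parallel disk-tangle is replaced by another, $\kappa'$ is visibly unknotted and its equatorial cross-section is exactly $I_{\vec\eta}(\vec J)$. This is where the double sliceness of the $J_i$ enters --- it supplies the standard parallel disks --- and it is the step your sketch would need to reconstruct.
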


\begin{proof}
We can isotope the link $K\cup\vec\eta$ so that the $\eta_i$ span small, disjoint disks $D_i$ for $i=1,\ldots, n$, which $K$ intersects transversely in $m_i$ points.  Because $K$ is doubly slice, there is an unknotted 2--sphere $\kappa\subset S^4$ such that $\kappa\cap (S^3\times[-1,1]) = K\times[-1,1]$.  Let $D_i\times I$ denote a a thickening of $D_i$ in $S^3$, so $(D_i\times I, K\times I)$ is a trivial $m_i$--strand tangle. From each $D_i\times I\times[-1,1]$, we will remove the interior of a small 4--ball $B_i$ such such that $B_i\cap (K\times[-1,1])$ is a disjoint collection of $m_i$ parallel disks and $B_i\cap (S^3,K)$ is a trivial tangle of $m_i$ strands. Let $m=\sum_{i=1}^nm_i$.  Let $\overline B$ be the result of this removal, i.e., to form $\overline B$ we have removed $n$ 4--balls from $S^4$ and and $m$  2--disks from $\kappa$ to form a punctured manifold pair.

Now, let $J_i$ be one of the topologically doubly slice knots that will be used in the infection.  Let $\mathcal J_i$ be an unknotted 2--sphere in $S^4$ such that $\mathcal J_i\cap(S^3\times[-1,1])=J_i\times[-1,1]$.  Let $\lambda_i$ denote the disjoint union of $m_i$ parallel copies of $\mathcal J_i$.  Then, $\lambda_i\cap S^3$ is the $(m_i,0)$--cable $C_i$ of $J_i$, and $\lambda_i\cap(S^3\times[-1,1])=C_i\times[-1,1]$.

We can assume that the parallel copies of $\mathcal J_i$ are close enough so that there is a small 4--ball $B'_i\subset S^3\times [-1,1]$ such that $B_i'\cap(C_i\times I)$ is a collection of $m_i$ parallel disks and $B_i'\cap(S^3,C_i)$ is a trivial tangle of $m_i$ strands.  Form $\overline B_i$ by removing the interior of $B_i'$.  Then $\overline B_i$ is a 4--ball that contains $m_i$ parallel, topologically unknotted disks that intersect the $B^3$ cross-section of $B^4$ in the tangle $(B^3,C_i)$, i.e., a 3--ball containing $m_i$ arcs that are tied in $C_i$.

Finally, we will re-form $S^4$ from $\overline B$ by gluing in $\overline B_i$ along  $\partial B_i\subset \overline B$.  This has the effect of replacing each parallel set of $m_i$ topological disks that we removed from $\kappa$ with a parallel set of $m_i$ topological disks.  Since $\kappa$ was originally topologically unknotted, this new 2--sphere $\kappa'$ is clearly topologically unknotted.  Furthermore, for each $i$, we removed from $(S^3, K)$ a trivial tangle of $m_i$ strands.  We have now replaced that tangle with the $(B^3, C_i)$ tangle described above.  The result of this is to tie the $m_i$ strands in the knot $C_i$.  This is precisely the effect of infection of $K$ with $J_i$ along $\eta_i$.  In other words, $\kappa'$ is a topologically unknotted 2--sphere with $\kappa'\cap S^3 = I_{\vec\eta}(\vec J)=K'$.  It follows that $K'$ is topologically doubly slice.
\end{proof}

We remark that the conclusion of Proposition \ref{prop:topo_infection} holds if $K$ is smoothly doubly slice and that an analogous proposition holds in the smooth category.  We can apply the previous proposition to the knots $\mathcal K_{p,k}$, proving that the knots referenced in Theorems \ref{thm:main} and \ref{thm:main2} are topologically doubly slice.

\begin{corollary}\label{coro:doubly_slice}
The knots $\mathcal K_{p,k}$ are topologically doubly slice and smoothly slice.
\end{corollary}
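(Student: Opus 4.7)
The plan is to prove the two assertions separately, since the topological argument does not yield smooth sliceness: the infecting knot $\#_k D$ is not smoothly slice ($D$ is the standard example of a topologically but not smoothly slice knot), so a smooth analogue of Proposition~\ref{prop:topo_infection} would not apply here.

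For topological double sliceness, I would apply Proposition~\ref{prop:topo_infection} to $\mathcal K_{p,k}=I_\eta(T_{2,p}\#T_{2,-p},\,\#_k D)$ and verify its two hypotheses. First, $T_{2,p}\#T_{2,-p}$ is of the form $L\#(-L)$ with $L=T_{2,p}$: since the $(2,p)$--torus knot is invertible, its concordance inverse (the mirror reverse) is $T_{2,-p}$. Zeeman's theorem~\cite{zeeman} then makes $T_{2,p}\#T_{2,-p}$ smoothly, and hence topologically, doubly slice. Second, the untwisted Whitehead double $D$ has trivial Alexander polynomial, and multiplicativity of $\Delta$ under connected sum yields $\Delta_{\#_k D}=1$; Corollary~\ref{coro:freedman} then gives that $\#_k D$ is topologically doubly slice. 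Proposition~\ref{prop:topo_infection} applies directly.

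For smooth sliceness I would exploit the placement of the infection curve $\eta$ rather than any property of the infecting knot. The knot $T_{2,p}\#T_{2,-p}$ admits a standard smooth ribbon disk $D_0\subset B^4$ coming from its $L\#(-L)$ presentation. The key observation to verify from Figure~\ref{fig:InfectedKnots}(a) is that $\eta$ is placed symmetrically across the two summands, so that it bounds a smoothly embedded disk $\Delta$ in the complement $B^4\setminus D_0$ after a small push of $D_0$ into the interior of $B^4$. Granted this, a standard $4$--dimensional infection construction --- thicken $\Delta$ to a $4$--ball $\Delta\times D^2\subset B^4\setminus D_0$ and use it to execute the infection operation inside $B^4$ while leaving $D_0$ undisturbed --- produces a smooth ribbon disk for $I_\eta(T_{2,p}\#T_{2,-p},J)$ for \emph{any} knot $J$, and in particular for $J=\#_k D$.

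The main obstacle is the geometric verification in this last paragraph: that $\eta$ bounds an embedded disk in the ribbon disk complement $B^4\setminus D_0$. This does not follow purely from the linking-number vanishing $\mathrm{lk}(\eta,\,T_{2,p}\#T_{2,-p})=0$, but should be visible by direct inspection of the symmetric placement of $\eta$ relative to the bands of $D_0$; equivalently, one may confirm smooth ribbon-ness by reading off explicit saddle bands from the alternative depiction of $\mathcal K_{p,k}$ in Figure~\ref{fig:InfectedKnots}(b), in which the two copies of $J$ appear in mirror-symmetric positions across a visible saddle region.
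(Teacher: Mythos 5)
Your argument for the topological double sliceness of $\mathcal K_{p,k}$ is exactly the paper's: Zeeman for $T_{2,p}\#T_{2,-p}$, Corollary \ref{coro:freedman} for $\#_kD$ via $\Delta_{\#_kD}=1$, and then Proposition \ref{prop:topo_infection}. That half is correct and matches the paper.

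The smooth sliceness argument, however, has a genuine gap. Beyond the geometric input you flag as unverified (that $\eta$ bounds a smoothly embedded disk $\Delta$ in $B^4\setminus D_0$), the step you call a ``standard $4$--dimensional infection construction'' valid for \emph{any} knot $J$ is not a standard fact and does not deliver smooth sliceness. To perform infection inside $B^4$ one deletes $\nu(\Delta)\cong D^2\times D^2$ and glues back a $4$--manifold $V$ whose boundary contributes $E_J$ in place of $\nu(\eta)$; one can check that $\partial V\cong S^3$, so such a $V$ exists, but the resulting ambient manifold $(B^4\setminus\nu(\Delta))\cup V$ is a priori only a homotopy $4$--ball. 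When $J$ is smoothly slice one may take $V$ to be the exterior of a pushed-in slice disk for $J$ and recover the standard $B^4$ --- this is exactly why Proposition \ref{prop:topo_infection} and its smooth analogue require the infecting knots to be (doubly) slice in the relevant category. Here $J=\#_kD$ is \emph{not} smoothly slice (it has $\tau=k\neq 0$), so your construction would only produce a disk in a homotopy $4$--ball, which by Freedman gives topological sliceness again but not smooth sliceness. If the construction worked smoothly for arbitrary $J$, every winding-number-zero satellite pattern whose pattern-plus-meridian link is smoothly slice would yield smoothly slice knots for every companion, which is far from known.

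The paper's proof of the smooth half is direct and bypasses infection entirely: as in Figure \ref{fig:DoubleBranchedCover}(a), $\mathcal K_{p,k}$ bounds a once-punctured Klein bottle obtained by attaching two bands to a disk, and the rightmost band is unknotted and untwisted; pushing the surface into $B^4$ and surgering it along the core of that band yields a smooth, properly embedded slice disk. This is essentially the rigorous version of the saddle-band picture you gesture at in your final sentence, and it is where your write-up needs to land; the detour through a ribbon disk for $T_{2,p}\#T_{2,-p}$ and a disk for $\eta$ in its complement should be dropped.
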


\begin{proof}
Let $K=T_{2,p}\#T_{2,-p}$, let $J=\#_kD$, and let $\eta$ be as shown in Figure \ref{fig:InfectedKnots}.   Then, $\mathcal K_{p,k} = K_\eta(J)$, with $K$  smoothly doubly slice (by Zeeman \cite{zeeman}) and $J$  topologically doubly slice (by Corollary \ref{coro:freedman}, since $\Delta_J=1$).  Thus, by Proposition \ref{prop:topo_infection}, $\mathcal K_{p,k}$ is topologically doubly slice.

To see that $\mathcal K_{p,k}$ is smoothly slice, consider it as the boundary of a punctured Klein bottle, as in Figure \ref{fig:DoubleBranchedCover}(a).  This punctured Klein bottle is formed by attaching two bands to a disk.  In this case, the right most band is unknotted and untwisted.  It follows that we can push the interior of the punctured Klein bottle into the 4--ball and surger it along the core of this band.  The result is a smooth, properly embedded disk in the 4--ball with boundary $\mathcal K_{p,k}$.
\end{proof}

\subsection{Relevant 3--manifolds and 4--dimensional cobordisms}\label{subsec:triad}\  

Let $I_{J,n}$ be the infected knot described above, and let $Z_{J,n}$ be the double-cover of $S^3$ branched along $I_{J,n}$.  In \cite{akbulut-kirby}, Akbulut and Kirby described how to get a surgery diagram for the double-cover of $B^4$ branched along a surface bounded by a knot.  Applying this technique, we see that $Z_{J,n} = S^3_{n,-n}((J\#J)_{(2,0)})$, i.e., surgery on the (2,0)--cable of $J\#J$ with surgery coefficients $n$ and $-n$ (see Figure \ref{fig:DoubleBranchedCover}).  Note that throughout this paper, $J$ will be a reversible knot, so $J^r=J$.

\begin{figure}
\centering
\includegraphics[scale = .75]{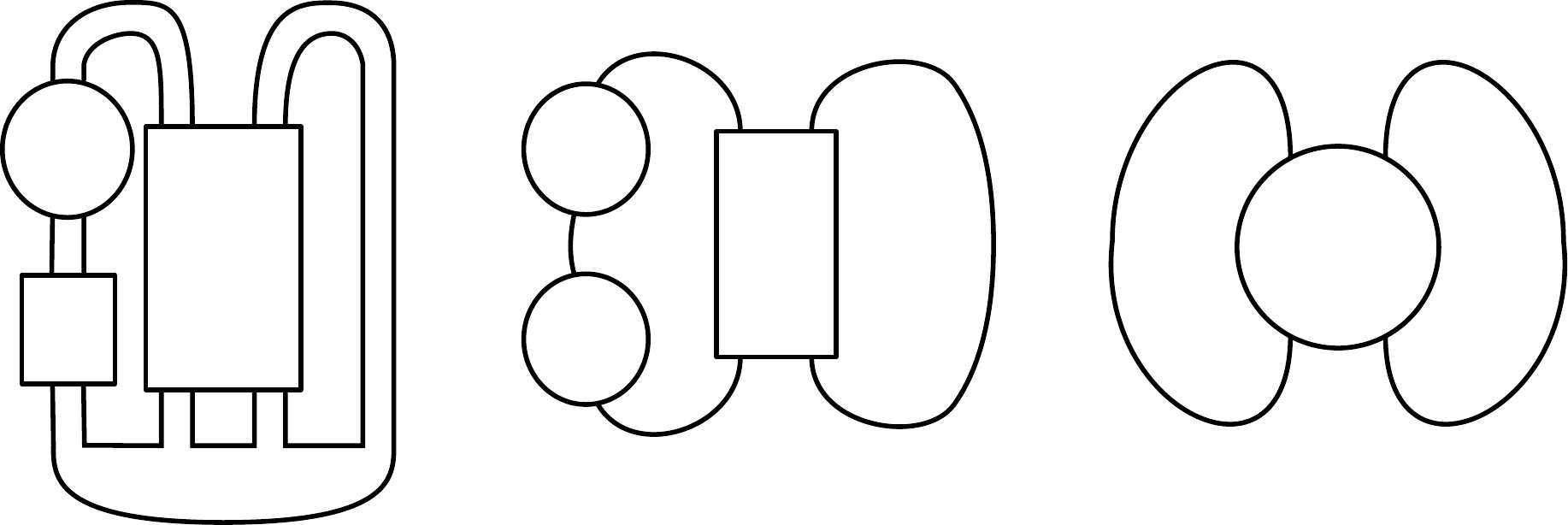}
\put(-350,-15){(a)}
\put(-140,-15){(b)}
\put(-140,65){\huge$\approx$}
\put(-387,90){\huge$J$}
\put(-385,45){\Large$n$}
\put(-225,13){\Large$n$}
\put(-160,15){\Large$0$}
\put(-90,15){\Large$n$}
\put(-35,15){\Large-$n$}
\put(-208,65){\Large$2n$}
\put(-347,63){\huge$n$}
\put(-257,90){\huge$J$}
\put(-257,40){\huge$J^r$}
\put(-78,65){\LARGE$J\#J^r$}
\caption{(a) The knot $I_{J,n}$, shown as the boundary of a punctured Klein bottle.  The boxes indicate $n$ positive half-twists.  (b) Two descriptions of the resulting branched double cover, $Z_{J,n}$, which are related by a handleslide.}
\label{fig:DoubleBranchedCover}
\end{figure}

Let $X=S^3_n(J\#J)$, and let $K\subset X$ be the null-homologous knot shown in Figure \ref{fig:KnotInY}.  If we think of $X$ as $n$--surgery on one component of the (2,0)--cable of $J\#J$, then $K$ is the image (in the surgery manifold) of the second component of the (2,0)--cable.  Since $K$ is a longitudinal push-off of $J\#J$ in $S^3$, it bounds, in $S^3$, a Seifert surface $F$ with $g(F) = g(J\#J)$.  Since $F$ is disjoint from $J\#J$, we see that $F$ is a Seifert surface for $K$ in $X$, as well.  Thus, $K$ is null-homologous in $X$.  With respect to the Seifert framing of $K$ in $X$, we have $X_{-n}(K) = Z_{J,n}$.

\begin{figure}
\centering
\includegraphics[scale = .75]{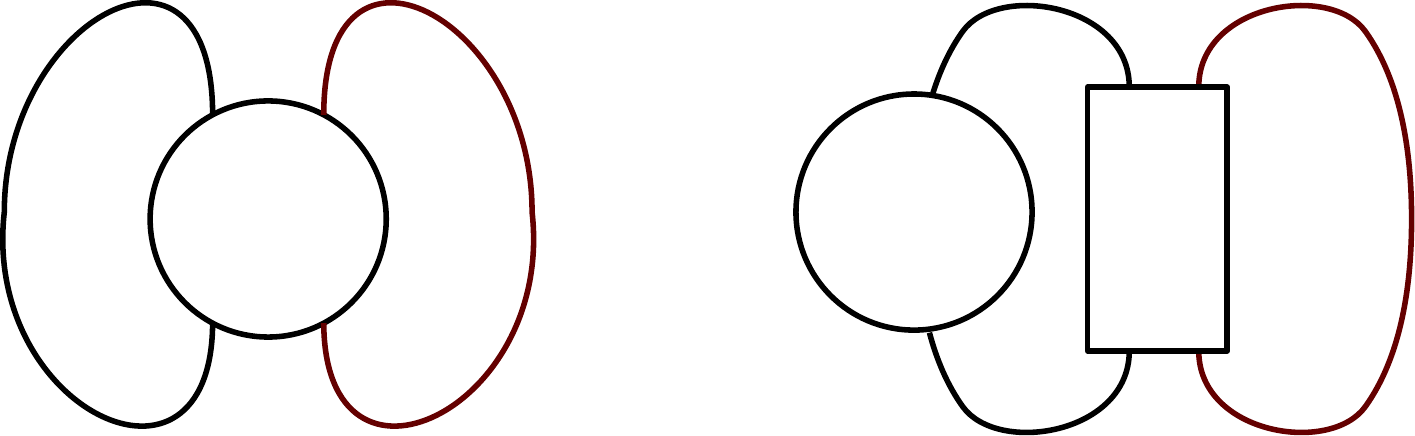}
\put(-170,45){\huge$\approx$}
\put(-310,80){\Large$n$}
\put(-110,88){\Large$n$}
\put(-63,45){\Large$2n$}
\put(-195,75){\textcolor{Sepia}{\Large$K$}}
\put(-1,75){\textcolor{Sepia}{\Large$K$}}
\put(-268,42){\LARGE$J\#J^r$}
\put(-128,43){\LARGE$J\#J^r$}
\caption{Two equivalent views of the null-homologous knot $K$ in $X=S^3_n(J\#J^r)$.  Note that the Seifert framing on $K$ is different in these two descriptions.  Compare with Figure \ref{fig:DoubleBranchedCover} to see that $Z$ is obtained by surgery on $K$.}
\label{fig:KnotInY}
\end{figure}

Now, let $Y = X_{-n-1}(K)$.  After performing a handle-slide and blowing down (see Figure \ref{fig:BlowdownY}), we see that $Y = S^3_{n^2+n}(J\#J\#T_{n,n+1})$.  These three manifolds, $X, Y,$ and $Z=Z_{J,n}$ form a triad:

\begin{figure}
\centering
\includegraphics[scale = .75]{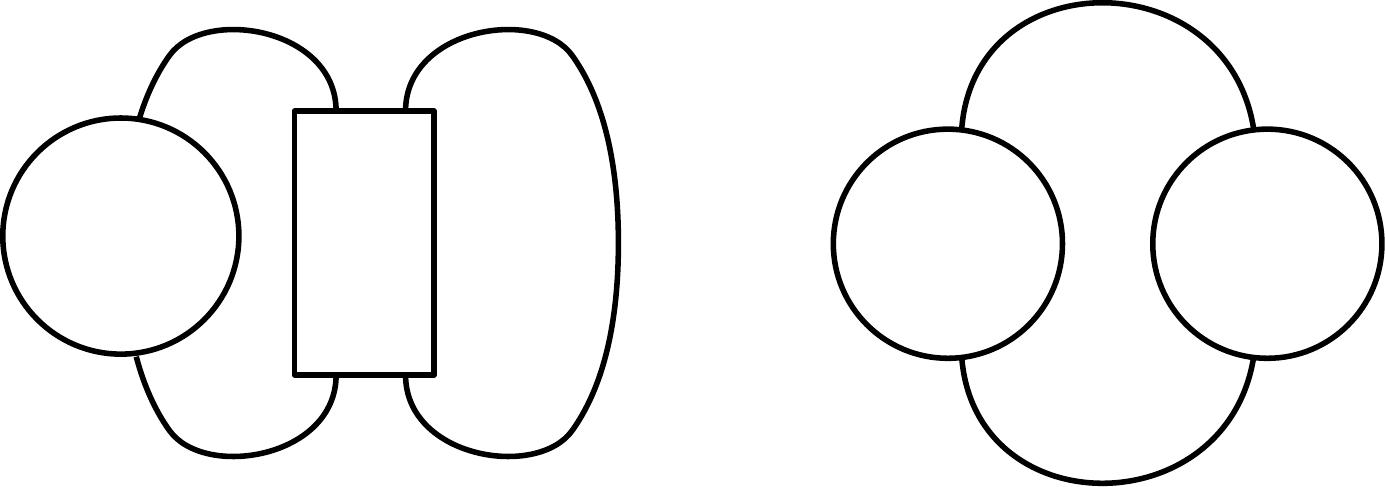}
\put(-150,50){\huge$\approx$}
\put(-273,0){\Large$n$}
\put(-173,0){\Large-$1$}
\put(-120,0){\Large$n^2$+$n$}
\put(-229,50){\Large$2n$}
\put(-292,50){\Large$J\#J^r$}
\put(-114,48){\Large$J\#J^r$}
\put(-42,48){\Large$T_{n,n+1}$}
\caption{The manifold $Y$ is obtained as $(-1)$--surgery on $K$ in $X$.  After a blowdown, $Y$ can be realized by $(n^2+n)$--surgery on $J\#J\#T_{n,n+1}$.}
\label{fig:BlowdownY}
\end{figure}

$$\begin{tikzpicture}
  \matrix (m) [matrix of math nodes,row sep=4em,column sep=4em,minimum width=2em] {
     X& \  & Z \\
     \  & Y & \  \\};
  \path[-stealth]
    (m-1-1) edge node [pos=.6,left] {$W_1\ $} (m-2-2)
    (m-2-2) edge node [pos=.4,right] {$\ W_2$} (m-1-3)
    (m-1-3) edge node [above] {$W_3$} (m-1-1);
\end{tikzpicture}$$

Now, since $-\overline{W_3}$ is the cobordism from $X$ to $Z$ corresponding to attaching a $(-n)$--framed 2--handle along $K$ in $X$, we have that $H_2(-\overline{W_3})\cong\Z$ is generated by the class $S_3 = F\cup D^2$ (i.e., the genus $g$ Seifert surface for $K$, capped off with the core disk of the 2--handle), and $[S_3]\cdot [S_3] = -n$ in $-\overline{W_3}$.  Therefore, $W_3$ is a positive definite cobordism whose second homology is generated by a surface of genus $g(J\#J)$ with self-intersection $n$.

Similarly, $W_1$ is formed by attaching a $(-n-1)$--framed 2--handle to $X$ along $K$.  The result is that $W_1$ is a negative definite cobordism whose second homology is generated by a class $[S_1]$, where $S_1$ is a surface of genus $g=g(K)$ with self-intersection $-n-1$.  Note also that $H^2(W_1)\cong \Z_n\oplus\Z$.  The map from $H^2(W_1)\to H^2(X)$ induced by restricting to $X$ is realized by projection onto the first component:  $\Z_n\oplus\Z\to\Z_n$, while the corresponding map from $H^2(W_1)\to H^2(Y)$ is reduction modulo $n+1$ of the second component and the identity on the first: $\Z_n\oplus\Z\to\Z_n\oplus\Z_{n+1}$.

Finally, $W_2$ is obtained by attaching a $(-1)$--framed 2-handle along the meridian $\mu$ shown in Figure \ref{fig:MeridinalBlowdown}.  In fact, $\mu$ is rationally null-homologous, and bounds a rational Seifert surface, $S_2$.  It turns out that this surface has self-intersection $-n^2-n$ and $[S_2]$ generates the second homology of $W_2$, so $W_2$ is negative definite.  Note also that $H^2(W_2)\cong \Z_n\oplus\Z$.  The map from $H^2(W_2)\to H^2(Y)$ induced by restricting to $Y$ is realized by reduction modulo $n+1$ of the second component and the identity on the first: $\Z_n\oplus\Z\to\Z_n\oplus\Z_{n+1}$, while the corresponding map from $H^2(W_2)\to H^2(Z)$ is reduction modulo $n$ of the second component and the identity on the first: $\Z_n\oplus\Z\to\Z_n\oplus\Z_n$.

\begin{figure}
\centering
\includegraphics[scale = .75]{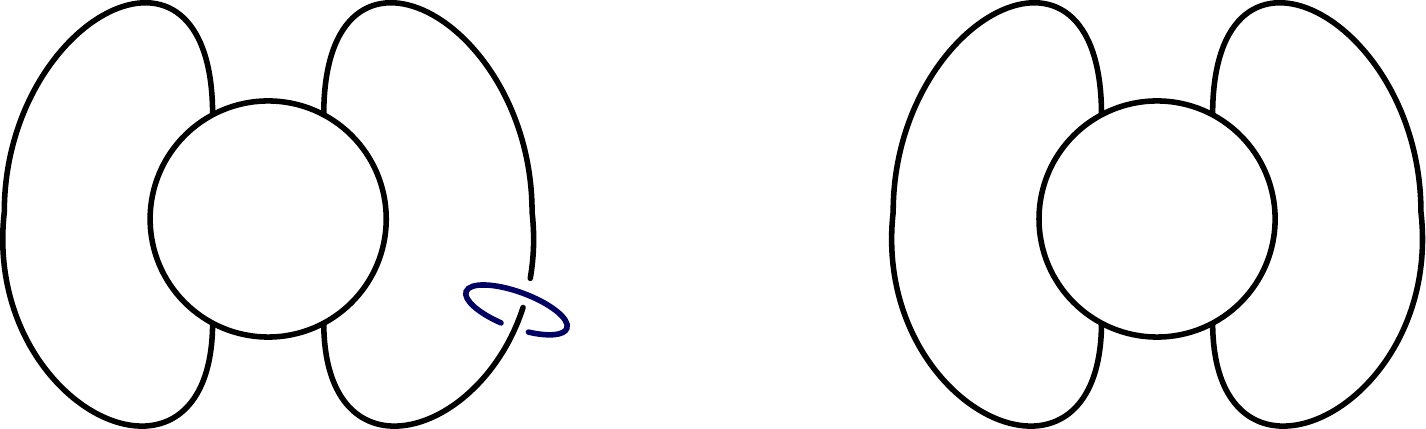}
\put(-67,-30){\Large(b)}
\put(-260,-30){\Large(a)}
\put(-267,-8){\large$n$}
\put(-215,-8){\large-$(n+1)$}
\put(-182,30){\textcolor{Blue}{\large$\mu$}}
\put(-75,-8){\large$n$}
\put(-25,-8){\large-$n$}
\put(-270,43){\Large$J\#J^r$}
\put(-74,43){\Large$J\#J^r$}
\caption{(a) The manifold $Y'$ shown with the rationally null-homologous meridian $\mu$.  (b) The manifold $Z$, obtained by $(-1)$--surgery on $\mu$.}
\label{fig:MeridinalBlowdown}
\end{figure}

Let us see why the capped off rational Seifert surface has self-intersection $-n^2-n$.  We are performing $(-1)$--surgery on a meridian, $\mu$, to one component of the framed link giving $Y$.  The effect of this surgery is to attach a 0--framed disk to every $(-1,1)$--curve on $\partial N(\mu)$.  If we select $n+1$ of these curves, we get the torus link $T_{n+1,n+1}$.    Since this is an $(n+1)$--component link and each component is a meridian, it is homologous to $(n+1)\cdot\mu=0$.  So this $T_{n+1,n+1}$ bounds an orientable surface in $Y$.  If we attach 0--framed 2--handles to each component, it is easy to see that the intersection among these disks is simply given by the total linking of the components of $T_{n+1,n+1}$.  Let $S_2$ be the surface obtained by capping off the $n+1$ boundary components of this orientable surface with these 0--framed disks.  Then, $S_2\cdot S_2 = -n(n+1)$.

The following example will be pertinent to our calculations in Section \ref{section:calculations}.

\begin{example}\label{ex:triad_spaces}
If $J$ is the unknot, then 
\begin{eqnarray*}
X &=&L(n,1), \\
Y &=& S^3_{n^2+n}(T_{n,n+1}) = L(n,1)\#L(n+1,-1), \text{ and } \\
Z &=&L(n,1)\#L(n,-1).
\end{eqnarray*}



In general, 
\begin{eqnarray*}
X  & = &  S^3_n(J\#J), \\
Y & = & S^3_{n^2+n}(J\#J\#T_{n,n+1}), \text{ and }\hspace{.75in}\hspace{.4in} \\
Z  & = & S^3_{n,-n}((J\#J)_{(2,0)}).
\end{eqnarray*}

\end{example}

\subsection{Enumerating $\spinc$ structures}\label{subsec:enumerate}\ 

This nice homological  set-up gives us natural enumerations of the $\spinc$ structures on the manifolds in question.  Since $X$ is surgery on a knot in $S^3$, there is an enumeration of $\spinc(X)$ by $i\in\Z_n$.  Let $\frak s_i\in\spinc(X)$ for some $i\in\Z_n$.

Let $[\frak s_i,\frak s_j]\in\spinc(Y)$ denote the $\spinc$ structure on $Y$ that is cobordant to $\frak s_i$ via  a $\spinc$ structure $[\frak s_i, \frak t_m]$ with
$$\langle c_1([\frak s_i,\frak t_m]),[S_1]\rangle = 2m+n,$$
where $m\in\Z$ is any integer satisfying $m\equiv j\pmod{n+1}$.

Let $[\frak s_i, \frak s_k]\in\spinc(Z)$ denote the $\spinc$ structure that is cobordant to $[\frak s_i,\frak s_j]$ via $[\frak s_i,\frak r_m]$ with
$$\langle c_1([\frak s_i,\frak r_m]),[S_2]\rangle = 2m+n(n+1),$$
where $m\in\Z$ is any integer satisfying $m\equiv j \pmod{n+1}$ and $m\equiv k\pmod n$.

A key feature of this set-up is that we are given affine identifications:
\begin{eqnarray*}
\spinc(X) & \cong & \Z_n \\
\spinc(Y) & \cong & \Z_n\oplus\Z_{n+1} \\
\spinc(Z) & \cong & \Z_n\oplus\Z_n,
\end{eqnarray*}
the first and third of which take the unique spin structure to the identity element.

\subsection{Remarks about surgery coefficients}\label{subsec:coefficients}\ 

In what follows, we will use Heegaard Floer theory to study the manifolds described above.  In general, when studying the Heegaard Floer homology of surgeries on knots, calculations become much simpler when dealing with large surgery coefficients.  For example, Theorem \ref{thm:surgery_formula}, which we will use extensively, requires that the surgery coefficient be positive and at least $2g-1$, where $g$ is the genus of the knot that is being surgered.  The purpose of this subsection is to show that this criterion is  met in what follows and to examine the knots $\mathcal K_{p,k_p}$, which will be used in Section \ref{section:infinite_order} to prove Theorem \ref{thm:main2}.

Let $I_{J,p}$ be the knot formed by infecting $T_{2,p}\#T_{2,-p}$ with $J$ along $\eta$, as shown in Figure \ref{fig:InfectedKnots}.  Consider $J=\#_kD$, which is a knot of genus $k$.  In order to apply Theorem \ref{thm:surgery_formula} to the manifold $X=S^3_p(J\#J)$, we must have $p\geq 2g(J\#J)-1=4k-1$.  In order to apply Theorem \ref{thm:surgery_formula} to the manifold $Y=S^3_{p^2+p}(J\#J\#T_{p,p+1})$, we must have 
$$p^2+p\geq 2g(J\#J\#T_{p,p+1})-1=2\left(2k + \frac{p(p-1)}{2}\right)-1.$$
So, we must have $p\geq \frac{4k-1}{2}$, i.e., $k\leq \frac{2p+1}{4}$.  In Section \ref{section:infinite_order}, it will be necessary for us to consider knots where $k\geq\frac{p+5}{12}$.  Let $k_p = \lceil\frac{p+6}{12}\rceil$, and define $\mathcal K_{p,k_p} = I_{\#_{k_p}D,p}$.  Then, the manifolds associated to $\mathcal K_{p,k_p}$ are surgeries of appropriately large coefficient and $k_p$ is large enough to satisfy the conditions in Section \ref{section:infinite_order}:
$$ 4k_p-1 \leq 4\left[\frac{p+6}{12}+1\right]-1 = \frac{p+18}{3}-1 \leq p, $$
and
$$ \frac{4k_p-1}{2}\leq \frac{4\left[\frac{p+6}{12}+1\right]-1}{2} = \frac{p+15}{6}\leq p .$$
These inequalities will be satisfied for large $p$, and for small $p$ it is easy to see that the condition on $k_p$ can be relaxed.  It should be noted that there are, in general, many values of $k$ that will suffice for each value of $p$, we have simply chosen one that will work for all large values of $p$.

\subsection{Linking forms and Question \ref{question:cancelation}}\label{subsec:question}\ 

A knot $K\subset S^3$ is called \emph{stably doubly slice} if there exists a doubly slice knot $J$ such that $K\#J$ is doubly slice.  Question \ref{question:cancelation} can be rephrased to ask whether there exist stably doubly slice knots that are not doubly slice. In this subsection we show that the correction terms could possibly detect the difference between smoothly doubly slice knots and smoothly stably doubly slice knots.

Analogously, we say that a 3--manifold $M$ \emph{stably embeds} smoothly in $S^4$ if there is a 3--manifold $N$ that embeds smoothly in $S^4$ such that $M\#N$ embeds smoothly in $S^4$.  It is not known if such an $M$ must itself embed in $S^4$.

Give a finite abelian group $G$, a \emph{linking form} on $G$ is a non-degenerate, symmetric, bilinear form $\lambda:G\times G\to\Q/\Z$.  For every rational homology 3--sphere $M$ there is a linking form $\lambda:H_1(M)\times H_1(M)\to\Q/\Z$ defined by Poincar\'e duality.

Now we will consider \emph{linking triples} $(G,\lambda, f)$, where $G$ is a finite abelian group, $\lambda$ is a linking form on $G$, and $f:G\to\Q$ is a function (not necessarily a homomorphism).  Such a triple is called \emph{metabolic} if there is a subgroup $G_1<G$ with $|G_1|^2 =|G|$ such that $\lambda|_{G_1}\equiv 0$ and $f(G_1)=0$.  The triple is called \emph{hyperbolic} if $G=G_1\oplus G_2$ with $G_1\cong G_2$ such that $\lambda|_{G_i}\equiv 0$ and $f(G_i)=0$ for $i=1,2$.  Note that the set of linking triples has an additive structure given by orthogonal sum.





\begin{lemma}\label{lemma:hyp_cancel}
Let $(A,\mu,f)$ and $(B,\nu,g)$ be linking triples.  If $(A,\mu,f)$ and $(A\oplus B, \mu\oplus\nu,f\oplus g)$ are both hyperbolic, then $(B,\nu,g)$ is metabolic.
\end{lemma}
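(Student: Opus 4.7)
The plan is to build an explicit metabolizer for $(B,\nu,g)$ by ``projecting'' a Lagrangian for $A\oplus B$ through a Lagrangian for $A$. Let $A=A_1\oplus A_2$ be the hyperbolic decomposition of $(A,\mu,f)$, so that $A_1$ is a metabolizer of $(A,\mu,f)$: $\mu|_{A_1}\equiv 0$, $f(A_1)=0$, and $|A_1|^2=|A|$. Let $H\subseteq A\oplus B$ be either Lagrangian supplied by the hyperbolic decomposition of $(A\oplus B,\mu\oplus\nu,f\oplus g)$, so $H=H^\perp$, $(f\oplus g)(H)=0$, and $|H|^2=|A|\cdot|B|$. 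Define
$$L \;:=\; \pi_B\bigl(H\cap(A_1\oplus B)\bigr) \;\subseteq\; B,$$
where $\pi_B\colon A\oplus B\to B$ is the projection. The claim is that $L$ is a metabolizer for $(B,\nu,g)$.

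Isotropy of $L$ and vanishing of $g$ on $L$ are then routine. For $b,b'\in L$, pick lifts $(a_1,b),(a_1',b')\in H\cap(A_1\oplus B)$. Membership in $H$ gives $\mu(a_1,a_1')+\nu(b,b')=0$, while $a_1,a_1'\in A_1$ forces $\mu(a_1,a_1')=0$, so $\nu(b,b')=0$. The identical argument with $f,g$ in place of $\mu,\nu$ yields $g(b)=0$.

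The main obstacle is verifying the size condition $|L|^2=|B|$. Set $K:=A_1\oplus B$ and combine the subgroup identity $|H|\cdot|K|=|H\cap K|\cdot|H+K|$ with the duality $|M^\perp|=|A\oplus B|/|M|$ coming from nondegeneracy of $\mu\oplus\nu$. Together these give $|H+K|=|A\oplus B|/|H^\perp\cap K^\perp|$. One has $H^\perp=H$, and a direct check — using that $A_1$ is a Lagrangian of $A$ (so $A_1^{\perp_A}=A_1$) and that $\nu$ is nondegenerate on $B$ — shows that $K^\perp=A_1\oplus 0$. Substituting $|H|=(|A|\cdot|B|)^{1/2}$ and $|K|=|A|^{1/2}\cdot|B|$ simplifies to
$$|H\cap(A_1\oplus B)| \;=\; |B|^{1/2}\cdot|H\cap(A_1\oplus 0)|.$$
Since the kernel of $\pi_B$ restricted to $H\cap(A_1\oplus B)$ is precisely $H\cap(A_1\oplus 0)$, it follows that $|L|=|B|^{1/2}$. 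Hence $L$ is a metabolizer, and $(B,\nu,g)$ is metabolic, as required. I expect the perp identification $K^\perp=A_1\oplus 0$ — which is where the hyperbolic (rather than merely metabolic) structure of $A$ genuinely enters — to be the one step that needs to be written out carefully; everything else is counting.
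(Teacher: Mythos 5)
Your proof is correct, and the metabolizer you construct is exactly the one in the paper: your $L=\pi_B\bigl(H\cap(A_1\oplus B)\bigr)$ is the paper's $B_0^L$ (the paper writes $L$ for the Lagrangian of $A\oplus B$ and $A_0$ for the chosen metabolizer of $A$), and the isotropy and $g$-vanishing checks are word-for-word the same. Where you genuinely diverge is the order count. The paper threads $|B_0^L|$ through two short exact sequences, proves only the inequality $|L\cap(A_0\oplus 0)|\cdot|A_0|\cdot|L_{A_1}|\leq |A|$ (via an orthogonality claim), deduces $|B_0^L|\geq |B|^{1/2}$, and then saturates using the fact that an isotropic subgroup of a nondegenerate form has order at most $|B|^{1/2}$. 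You instead get the exact count in one pass from $|H\cap K|\cdot|H+K|=|H|\cdot|K|$ together with $(H+K)^\perp=H^\perp\cap K^\perp$, $H^\perp=H$, and $K^\perp=A_1\oplus 0$; all three perp identities follow from nondegeneracy of $\mu$, $\nu$, and $\mu\oplus\nu$ exactly as you indicate, and the computation $K^\perp=A_1\oplus 0$ is indeed the step where $A_1^{\perp_A}=A_1$ (i.e., that $A_1$ is a full metabolizer, not just isotropic) is used. Your route is cleaner and more symmetric; it also makes transparent the paper's remark that only metabolicity, not hyperbolicity, of $(A,\mu,f)$ and $(A\oplus B,\mu\oplus\nu,f\oplus g)$ is needed. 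The one thing worth writing out, as you anticipate, is the two-line verification that $K^\perp=A_1\oplus 0$ and that $|M^\perp|=|G|/|M|$ for a subgroup of a nondegenerate linking form (surjectivity of $\widehat{G}\to\widehat{M}$ uses divisibility of $\Q/\Z$).
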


Though we will use the hypotheses that $(A,\mu,f)$ and $(A\oplus B, \mu\oplus\nu,f\oplus g)$ are hyperbolic, the result hold if these objects are merely metabolic.  The following proof is, in essence, due to Kervaire \cite{kervaire} (cf. \cite{gilmer:slice}).

\begin{proof}
Let $A=A_0\oplus A_1$ and $A\oplus B = L\oplus M$ be the hyperbolic splittings of $A$ and $A\oplus B$.  Let $L_i=L\cap(A_i\oplus B)$ and $M_i=M\cap(A_i\oplus B)$ for $i=0,1$.  Let $B_i^L$ and $B_i^M$ be the projections of $L_i$ and $M_i$ onto $B$, respectively.  From now on, we will restrict our attention to $B_0^L$.

Let $b,b'\in B_0^L$.  Then there exist $a,a'\in A_0$ such that $a\oplus b,a'\oplus b'\in L$.  Then,
$$\nu(b,b') = \mu(a,a')+\nu(b,b') = \mu\oplus\nu(a\oplus b,a'\oplus b') = 0,$$
and
$$ g(b) = f(a) + g(b) = f\oplus g(a\oplus b) = 0.$$
Thus, the restrictions of $\nu$ and $g$ to the $B_0^L$ vanish.  Next we show that $|B_0^L|^2=|B|$.  Consider the following two short exact sequences:
$$ 0 \longrightarrow L_0 \longrightarrow L \stackrel{\pi_{A_1}}{\longrightarrow} L_{A_1} \longrightarrow 0,$$
where $\pi_{A_1}:A\oplus B\to A_1$ is projection onto $A_1<A$, and
$$ 0 \longrightarrow L\cap(A_0\oplus0) \longrightarrow L_0 \stackrel{\pi_{B}}{\longrightarrow} B_0^L \longrightarrow 0,$$
where $\pi_B:A\oplus B$ is projection onto $B$.

Next, we claim that $|L\cap(A_0\oplus 0)|\cdot|A_0|\cdot|L_{A_1}|\leq |A|$.  Assuming this, we see that
$$|B_0^L| = \frac{|L_0|}{|L\cap(A_0\oplus 0)|}=\frac{|L|}{|L\cap(A_0\oplus 0)|\cdot|L_{A_1}|}\geq \frac{|L|\cdot|A_0|}{|A|} = |B|^{1/2}.$$
Because $B_0^L$ is isotropic and $\nu$ is non degenerate, we have that $|B_0^L|^2=|B|$, as desired.  To justify claim assumed above, we will prove that $L\cap(A_0\oplus0)$ is orthogonal to $A_0\oplus L_{A_1}$ under $\mu$.  Clearly, $L\cap(A_0\oplus 0)\perp A_0$.  Let $u\in L\cap(A_0\oplus0)$ and $w\in L_{A_1}$.  Then there exists $v\oplus x\in L_0$ such that $(v+w)\oplus x\in L$.  Then, 
$$\mu(u,w) = \mu(u,w) + \mu(u,v) = \mu(u,w+v) + \nu(0,x) = \mu\oplus\nu(u\oplus0,(w+v)\oplus x) = 0.$$

This shows that $B_0^L$ is a metabolizing summand of $B$.  The same is true for $B_1^L, B_0^M,$ and $B_1^M$.\end{proof}

Note that the four metabolizers produced in the proof above are all isomorphic.  This follows from the classification of linking forms, specifically the fact that a linking form splits over the homogeneous $p$--group components of the group \cite{wall:linking}.  Because of this, we could have performed the above analysis one homogeneous $p$--group component at a time, each of which would split via $L$ and $M$.

Next, we investigate how these metabolizers sit inside $A$ and $B$.  Suppose that $A$ and $B$ are homogeneous $p$--groups with a common exponent and have ranks $2r$ and $2s$, respectively. Without loss of generality, we can write
$$L = \langle (a_1, b_1), \ldots, (a_t,b_t), (0,b_{t+1}), \ldots, (0, b_{t+l}), (a_{t+l+1},0),\ldots, (a_{r+s}, 0),$$
where the $b_i$ are linearly independent, and the $a_j$ are linearly independent.  Let $t'=r-l$.  Without loss of generality, we can assume that $a_1, \ldots, a_{t'}\in A_0$ and $a_{t'+1},\ldots, a_{2t'}\in A_1$ (by consideration of the ranks of $B_0^L$ and $B_1^L$).  Since $\nu$ is non-degenerate, we can assume that, for $0\leq i\leq t'$ and $t'+1\leq j\leq 2t'$, $\nu(b_i, b_j)\not=0$ if and only if $j=t'+i$ (perform change of bases within these rank $t'$ summands).  Note that $B/\langle b\rangle^\perp$ has rank one for each $b\in B$.

Clearly, $t+l\leq 2s$, and, in fact, we have that $t$ is even with $t/2+l = r$, i.e., $t=2t'$.  This claim follows from the $\nu$ being non-degenerate; if $t/2<r-l$, there is an element $(a_{2t'+1}, b_{2t'+1})$ that an be assumed to have the property that $\nu(b_{2t'+1}, b_i)=0$ for all $0\leq i\leq t'$.  However, if this were the case, then $\langle b_1,\ldots, b_{t'},b_{2t'+1}, b_{t+1}, \ldots, b_{t+l}\rangle$ would have rank $r+1$ and be isotropic, a contradiction.

It follows that each $a_i$ for $0\leq i\leq t$ is in either $A_0$ or $A_1$.  Together with a similar argument for $M$, we get that $\pi_B(L_0+L_1+M_0+M_1) =B$.  In particular, $B = B_0^L+B_1^L+B_0^M+B_1^M$.  We can use this to prove a simple corollary.

\begin{corollary}\label{coro:rank4}
Let $(A,\mu,f)$ and $(B,\nu,g)$ be linking triples.  If $(A,\mu,f)$ and $(A\oplus B, \mu\oplus\nu,f\oplus g)$ are both hyperbolic,  and if each homogeneous $p$--group component of $B$ is at most rank 4, then $(B,\nu,g)$ is hyperbolic.
\end{corollary}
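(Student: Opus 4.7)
The plan is to upgrade the metabolic conclusion of Lemma \ref{lemma:hyp_cancel} to hyperbolicity by finding, among the four metabolizers $B_0^L, B_1^L, B_0^M, B_1^M$ produced there, a pair whose sum equals all of $B$; a cardinality count then forces their intersection to be trivial, yielding the hyperbolic splitting. Since linking forms split orthogonally over homogeneous $p$-group components and the construction respects this, I first reduce to $B \cong (\Z/p^k)^{2s}$ with $s \le 2$. The generators listed in the preceding discussion exhibit each $B_i^L, B_i^M$ as a pure subgroup of $B$ of rank $s$.

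Reducing modulo $p$, I obtain four $s$-dimensional totally isotropic subspaces of $\bar B := B/pB \cong \F_p^{2s}$ under the non-degenerate bilinear form $\bar\nu([x],[y]) := p^{k-1}\nu(x,y)$; their images still span $\bar B$. A transverse pair of images lifts, via Nakayama's lemma together with a cardinality count, to a pair in $B$ whose sum is all of $B$. So it suffices to show the purely vector-space statement: any four $s$-dimensional totally isotropic subspaces of a non-degenerate $(\F_p^{2s}, \bar\nu)$ with $s \le 2$ that span must contain a transverse pair. The case $s = 1$ is immediate, since the four subspaces cannot all coincide with a single isotropic line without failing to span, and distinct lines in $\F_p^2$ are transverse.

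The main obstacle is the $s = 2$ case, which I plan to treat by contradiction. Assume every pair $L_i, L_j$ meets nontrivially, and pick $L_1, L_2$ with $\dim(L_1 + L_2) = 3$. Maximal isotropics in a 4-dimensional non-degenerate form are self-orthogonal, so $(L_1 + L_2)^\perp = L_1 \cap L_2 =: \langle u\rangle$, whence $L_1 + L_2 = u^\perp$. For any third isotropic 2-plane $L_3$ meeting $L_1$ and $L_2$, either the two 1-dimensional intersections differ and together span $L_3 \subseteq L_1 + L_2$, or they coincide with $\langle u\rangle$, placing $u \in L_3$ and hence $L_3 \subseteq u^\perp = L_1 + L_2$. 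The non-degenerate form induced on $(L_1 + L_2)/\langle u\rangle \cong \F_p^2$ has $L_1/\langle u\rangle, L_2/\langle u\rangle$ as distinct isotropic lines; away from characteristic 2 this forces the quotient form to have exactly these two isotropic lines, so $L_3 \in \{L_1, L_2\}$, contradicting distinctness. In characteristic 2 a third isotropic line may appear, but then $L_4$, which by assumption meets $L_1, L_2, L_3$ and must point outside $L_1 + L_2$ (lest the four $L_i$ fail to span), forces $u \in L_4$ and $L_4 \ni w \notin u^\perp$, contradicting the isotropy of $L_4$ because $\nu(u, w) \ne 0$. The transverse pair recovered in $B$ provides $B = N_1 \oplus N_2$ into pure rank-$s$, totally isotropic, $g$-vanishing summands, which are isomorphic and thus yield the desired hyperbolic decomposition of $(B, \nu, g)$.
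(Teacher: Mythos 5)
Your argument is correct, but it establishes the corollary by a genuinely different mechanism than the paper. The paper's proof is a two-line dichotomy: four half-rank metabolizers spanning $B$ either contain a disjoint (hence complementary) pair, or share a common element $b$, and the latter is killed because it would put $(0,b)$ in $L\cap M=0$. That dichotomy is the classical fact that pairwise-meeting planes in a four-dimensional space either pass through a common line or lie in a common hyperplane; the paper asserts it without proof and speaks of ``rank'' where some care is needed over $\Z/p^k$ rather than a field. You instead exploit a property the paper never uses at this stage: the metabolizers are \emph{maximal totally isotropic} for the nondegenerate form $\bar\nu$ on $B/pB$. Your key step --- that two non-transverse maximal isotropics satisfy $L_1+L_2=u^\perp$ for $u$ spanning $L_1\cap L_2$, and that any further isotropic plane meeting both is trapped in $u^\perp$ --- contradicts spanning directly, with no appeal to $L\cap M=0$; the explicit mod-$p$ reduction plus Nakayama also makes the descent from $\F_p$-dimensions back to $\Z/p^k$-modules precise. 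Two small repairs: in the odd-characteristic branch the contradiction is with \emph{spanning}, not with distinctness of the four metabolizers (which is nowhere assumed); and in fact your trapping argument already gives $L_3,L_4\subseteq L_1+L_2$ in every characteristic, so the four subspaces span at most a $3$-dimensional space and both the ``exactly two isotropic lines'' refinement and the separate characteristic-two case can be deleted. Finally, like the paper, you treat one homogeneous component at a time; since $g$ is not additive, assembling the componentwise splittings into a hyperbolic splitting of $(B,\nu,g)$ deserves a sentence, though this issue is shared with the paper's own proof and is vacuous in the application, where $B$ is homogeneous.
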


\begin{proof}
Since $B$ is rank 4 and spanned by four metabolizers of rank 2 (by the comments above), either some pair of the metabolizers are disjoint, or there is an element $b$ common to each of the four metabolizers.  However, the latter case implies that $(0,b)\in L\cap M$, a contradiction.  Thus, there is a pair giving a hyperbolic splitting of $(B,\nu,g)$.  If $B$ is rank 2, a similar argument works.
\end{proof}

Next, we give a counterexample that shows that Corollary \ref{coro:rank4} is as strong as possible, in some sense.

\begin{example}\label{ex:rank6}
Let $A\cong\Z_p^6=\langle z_1, w_1, z_2, w_2, z_3, w_4\rangle$ and let $B\cong\Z_p^6=\langle x_1, y_1, x_2, y_2, x_3, y_4\rangle$.  Let $A_0 = \langle z_1, z_2, z_3\rangle$ and $A_1 = \langle w_1, w_2, w_3\rangle$.  With respect to these bases, let $\mu$ and $\nu$ be linking forms given by 
$$\bigoplus_3\begin{pmatrix} 0 & -2/p \\ -2/p & 0 \end{pmatrix} \text{ and } \bigoplus_3\begin{pmatrix} 0 & 2/p \\ 2/p & 0 \end{pmatrix},$$
respectively.  Consider the splitting $A\oplus B=L\oplus M$, where 
$$L =\langle (z_1, x_1), (z_2, x_2), (w_1, y_1), (w_2, y_2), (0, x_3), (w_3, 0)\rangle,$$
and
$$M =\langle (z_1, y_2), (z_3,x_1), (w_1, x_2), (w_3, y_1), (0, y_3), (w_2, 0)\rangle.$$
It is straightforward to check that $L\cap M=0$ and that $L+M=A\oplus B$.  Furthermore, it is obvious that $\mu\oplus\nu$ vanishes on both $L$ and $M$.  Next, notice that
\begin{eqnarray*}
B_0^L & = & \langle x_1, x_2, x_3\rangle \\
B_1^L & = & \langle y_1, y_2, x_3\rangle \\
B_0^M& = & \langle x_1, y_2, y_3\rangle \\
B_1^M & = & \langle y_1, x_2, y_3\rangle. 
\end{eqnarray*}
No pair of these metabolizers is disjoint.  Define $g:B\to\Q$ by
$$g(b) = \begin{cases} 0 & \text{ if $b\in B_0^L\cup B_1^L\cup B_0^M\cup B_1^M$}, \\
1 & \text{ otherwise} \end{cases}.$$
Define $f:B\to\Q$ by
$$f(a) = \begin{cases} -g(b_a) & \text{ if $a\not\in A_0\cup A_1$}, \\
0 & \text{ if $a\in A_0\cup A_1$} \end{cases},$$
Where $a\mapsto b_a$ its the isomorphism from $A$ to $B$ that sends the $z_i$ to the $x_i$ and the $w_i$ to the $y_i$.

With this set up, it is clear that $(A,\mu, f)$ is hyperbolic and that $g:B\to \Q$ is not hyperbolic.  It remains to show that $f\oplus g:A\oplus B\to\Q$ vanishes on $L$ and $M$.  This will imply that $(A\oplus B, \mu\oplus\nu, f\oplus g)$ is hyperbolic, thus exemplifying the necessity of the rank restriction in Corollary \ref{coro:rank4}.

Let $l\in L\cup M$ with $l=(a,b)$.  It suffices to check that $f(a)=0$ if $b$ is in one of the metabolizers listed above and that $a\not\in A_0\cup A_1$ if $b$ is not in one of these metabolizers.  It is straightforward to check that these criteria are met.

\end{example}

Let $K\subset S^3$, and let $\mathcal A=(A, \mu,f)$ be the linking triple associated to $\Sigma_2(K)$, i.e., $A=H_1(\Sigma_2(K))$, $\mu$ is the linking from on $A$, and $f(a) = d(\Sigma_2(K),\frak s_a)$, where $\frak s_a$ is the $\spinc$ structure corresponding to $a\in H_1(\Sigma_2(K))$.  Let $\mathcal A_{p^k}$ denote the restriction of this triple to the homogeneous $p^k$--group component of $A$.  We have shown the following.

\begin{proposition}\label{prop:stable_terms}
Let $K\subset S^3$ and let $\mathcal A$ be the associated linking triple.  Suppose that $\det(K) = |A| = p_1^{k_1}\cdots p_n^{k_n}$.
\begin{enumerate}
\item If $K$ is smoothly doubly slice, then $\mathcal A$ is hyperbolic.
\item If $K$ is smoothly stably doubly slice, then $\mathcal A_{p_i^{k_i}}$ is hyperbolic whenever $k_i\leq 4$.
\end{enumerate}
\end{proposition}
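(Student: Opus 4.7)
I would prove the proposition by splitting into its two parts, with part~(1) following almost directly from Theorem~\ref{thm:hyp_corr_terms} and part~(2) following from part~(1) together with Corollary~\ref{coro:rank4}. For part~(1), if $K$ is smoothly doubly slice, then $\Sigma_2(K)$ embeds smoothly in $S^4$, so the Mayer--Vietoris and Hantzsche arguments in the proof of Theorem~\ref{thm:hyp_corr_terms} produce the direct-sum decomposition $A = G_1 \oplus G_2$ with $G_1 \cong G_2$, and the same theorem gives $f|_{G_i} \equiv 0$. It remains to verify that $\mu|_{G_i} \equiv 0$; this follows from the standard fact that if a rational homology sphere $M$ bounds a rational homology $4$-ball $W$, then $\ker(H_1(M) \to H_1(W))$ is self-annihilating under the linking form (a direct consequence of Lefschetz duality, essentially repeating the computation done for homology in the proof of Theorem~\ref{thm:hyp_corr_terms}). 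Applied to the two fillings $U_1, U_2$ of $\Sigma_2(K)$, this gives $\mu|_{G_i} = 0$, so $\mathcal A$ is hyperbolic.

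For part~(2), suppose $J$ is a smoothly doubly slice knot such that $K\#J$ is smoothly doubly slice. Then $\Sigma_2(K\#J) \cong \Sigma_2(K)\#\Sigma_2(J)$, and under this identification the correction terms and linking forms add, so $\mathcal A(K\#J) = \mathcal A(K) \oplus \mathcal A(J)$. By part~(1), both $\mathcal A(J)$ and $\mathcal A(K\#J)$ are hyperbolic, which is exactly the hypothesis of Corollary~\ref{coro:rank4}. By Wall's classification, linking triples split orthogonally over their homogeneous $p^j$-components, and the direct-sum identity together with the hyperbolicity descend to each such component. Now fix a prime $p_i$ dividing $\det(K)$. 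Since the $p_i$-Sylow of $A$ has total order $p_i^{k_i}$, every homogeneous $p_i^j$-summand of $\mathcal A(K)$ has $\Z/p_i^j$-rank at most $k_i$. When $k_i \le 4$, this rank is at most $4$, so Corollary~\ref{coro:rank4} applies to the corresponding summands of $\mathcal A(J)$ and $\mathcal A(K)$ and shows that each homogeneous summand of $\mathcal A(K)$ is hyperbolic. Orthogonally summing over $j$ yields hyperbolicity of $\mathcal A_{p_i^{k_i}}$.

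The genuinely new content is all in the preceding lemmas, so the main work for this proposition is bookkeeping: one must confirm that hyperbolicity and the direct-sum identity are both compatible with Wall's decomposition into homogeneous $p^j$-components, and one must translate the arithmetic bound $k_i \le 4$ into the rank bound required by Corollary~\ref{coro:rank4}. Example~\ref{ex:rank6} shows that the rank $\le 4$ restriction cannot be dropped, so the $k_i \le 4$ hypothesis is as weak as this approach permits.
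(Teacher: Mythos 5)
Your proposal is correct and follows essentially the same route as the paper, which presents the proposition as a summary of the preceding discussion: part (1) is Theorem \ref{thm:hyp_corr_terms} (you usefully make explicit the standard Lefschetz-duality argument that the linking form also vanishes on $G_1$ and $G_2$, which the paper leaves implicit), and part (2) is Corollary \ref{coro:rank4} applied to $\mathcal A(J)$ and $\mathcal A(K\#J)=\mathcal A(J)\oplus\mathcal A(K)$, restricted to the $p_i$--primary part, with $k_i\leq 4$ forcing each homogeneous component to have rank at most $4$. No gaps.
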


Note that (1) is a restatement of Theorem \ref{thm:hyp_corr_terms}.  We will use this result in Sections \ref{section:calculations} and \ref{section:infinite_order} to help prove Theorems \ref{thm:main} and \ref{thm:main2}.




\section{Heegaard Floer homology}\label{section:HF} 

Below, we collect some basic facts about the suite of invariants known as Heegaard Floer homology.  For complete details, see (for example) \cite{oz-sz:absolute,oz-sz:knots,oz-sz:3-manifolds_1}.  Throughout, let $\F$ denote the field with two elements.

\subsection{3--manifold invariants}\ 

Let $M$ be a closed 3--manifold, and let $\frak s\in\spinc(M)$ be a torsion $\spinc$ structure on $M$.  Heegaard Floer homology theory associates to $(M,\frak s)$ a $\Z$--filtered, $\Q$--graded chain complex $CF^\infty$, which is well-defined up to filtered chain homotopy equivalence.  This complex is a free, finitely generated $\F[U,U^{-1}]$--module.  The action of $U$ lowers the filtration level by one, and lowers the grading by two.  Henceforth, if $C$ is any filtered, graded chain complex, then $C_{\{i\leq n\}}$ denotes the subcomplex consisting of elements of filtration level at most $n$.

Denote the associated homology group by $HF^\infty(M,\frak s)$.  If $M$ is a rational homology 3--sphere, it turns out that these  groups are uninteresting.  Let $\mathcal T^\infty = \F[U,U^{-1}]$.  Then, for any rational homology 3--sphere $M$ and any $\frak s\in\spinc(M)$, we get $HF^\infty(M,\frak s)\cong\mathcal T^\infty$.  This means that any interesting information about $(M,\frak s)$ must be stored at the chain complex level.

Indeed, there are associated sub- and quotient-complexes:
$$CF^-(M,\frak s) = CF^\infty(M,\frak s)_{\{i<0\}},$$
$$CF^+(M,\frak s) = CF^\infty(M,\frak s)/CF^-(M,\frak s),$$
and
$$\CFhat(M,\frak s) = CF^\infty(M,\frak s)_{\{i\leq0\}}/CF^-(M,\frak s).$$

The corresponding homology groups, $HF^-(M,\frak s)$, $HF^+(M,\frak s)$ , and $\HFhat(M,\frak s)$ turn out to be very powerful 3--manifold invariants.  These groups are related by two important long exact sequences:
$$ \cdots \longrightarrow HF^-(M,\frak s) \stackrel{\iota}{\longrightarrow} HF^\infty(M,\frak s) \stackrel{\pi}{\longrightarrow} HF^+(M,\frak s) \longrightarrow\cdots$$
and
$$ \cdots \longrightarrow \HFhat(M,\frak s) \stackrel{\hat\iota}{\longrightarrow} HF^+(M,\frak s) \stackrel{U}{\longrightarrow} HF^+(M,\frak s) \longrightarrow\cdots.$$

Note that $\HFhat(M,\frak s)$ is a finitely generated $\F$--vector space.  Define $$HF_{red}(M,\frak s) = HF^+(M,\frak s)/\im(\pi).$$  Let $\mathcal T^+ = \F[U, U^{-1}]/U\cdot\F[U]$.  If $M$ is a rational homology 3--sphere, we have the following decomposition:
$$HF^+(M,\frak s) = \mathcal T^+\oplus HF_{red}(M,\frak s).$$
It turns out that the grading of the element of lowest grading living in $\mathcal T^+$, which we call the tower part of $HF^+(M,\frak s)$, is an interesting invariant called the \emph{correction term}.

\begin{definition}
The \emph{correction term} (or \emph{$d$--invariant}) of $(M,\frak s)$ is denoted $d(M,\frak s)$ and is given by
$$\min\{gr(\pi(\alpha)):\alpha\in HF^\infty(M,\frak s)\}.$$
\end{definition}

The correction term enjoys a number of nice properties, including the fact that $d$ is a $\spinc$ rational homology cobordism invariant  (see  \cite{oz-sz:absolute}):
\begin{enumerate}
\item $d(M_1\#M_2, \frak s_1\#\frak s_2) = d(M_1,\frak s_1)+d(M_2,\frak s_2)$,
\item $d(-M,\frak s) = -d(M,\frak s)$, where $-M$ denotes $M$ with the opposite orientation, and
\item $d(M,\frak s) = 0$ whenever  $(M,\frak s) = \partial (W,\frak t)$, where $W$ is a rational-homology 4--ball, and $\frak t|_{\partial W} = \frak s$.  
\end{enumerate}
This last property is key in the proof of Theorem \ref{thm:hyp_corr_terms}.

As mentioned above, there are affine identifications $\spinc(M)\cong H^2(M;\Z)$, so a rational homology 3--sphere $M$ will have $|H^2(M)|$ correction terms.  We will denote the collection of correction terms associated to $M$ by $\mathcal D(M)$.  When possible, the group structure of $H^2(M)$ will be implicit in our presentation of $\mathcal D(M)$.  For example, in \cite{oz-sz:absolute} a formula for the correction terms of lens spaces is given.  In particular,

\begin{equation}\label{eqn:lens_corr_terms}
d(L(p,1),i) = \frac{p-(2i-p)^2}{4p}.
\end{equation}

\begin{example}\label{ex:lens_corr_terms}
Consider the case from Section \ref{section:geometry} when $J$ is unknotted and $n=5$.  Then, $Y=L(5,1)$, and Equation \ref{eqn:lens_corr_terms} tells us that,  
$$\mathcal D(L(5,1)) = \{1,1/5,-1/5,-1/5,1/5\}.$$
By the additivity of the correction terms, we have the following:
$$\mathcal D(L(5,1)\#L(5,-1)) = \left\{
\begin{array}{ccccc}
0 & 4/5 & 6/5 & 6/5 & 4/5 \\
-4/5 & 0 & 2/5 & 2/5 & 0 \\
-6/5 & -2/5 & 0 & 0 & -2/5 \\
-6/5 & -2/5 & 0 & 0 & -2/5 \\
-4/5 & 0 & 2/5 & 2/5 & 0 
\end{array}
\right\}.$$

Note that implicit in the presentation matrix is the affine identification $\spinc(L(n,1)\#L(n,-1))\cong\Z_5\oplus\Z_5$ given by $[\frak s_i,\frak s_j]\sim (i,j)$.  For example, the correction terms vanish on all elements of the subgroups generated by $(1,1)$ and $(1,4)$ in $\Z_5\oplus\Z_5$.

It will sometimes be helpful to write such collections as follows:
$$\mathcal D(L(5,1)) = \{-1/5, 1/5,1,1/5,-1/5\}.$$
and
$$\mathcal D(L(5,1)\#L(5,-1)) = \left\{
\begin{array}{ccccc}
0 & -2/5 & -6/5 & -2/5 & 0 \\
2/5 & 0 & -4/5 & 0 & 2/5 \\
6/5 & 4/5 & 0 & 4/5 & 6/5 \\
 2/5 & 0 & -4/5 & 0 & 2/5 \\
0 & -2/5 & -6/5 & -2/5 & 0 \\
\end{array}
\right\}.$$
The only difference here, is that we have centered our indexing set about zero, using 
$$\{-(p-1)/2, -(p-3)/2,\ldots,-1,0,1,\ldots (p-3)/2,(p-1)/2\}$$
to index $\Z_p$ instead of \{$0,1,2,\ldots, p-1\}$.  
\end{example}

\subsection{The surgery exact triangle and 4--dimensional cobordisms}\label{subsec:cobordism}\ 
A $\spinc$--cobordism between two $\spinc$ 3--manifolds induces certain maps between the Heegaard Floer homology groups associate to the two manifolds.  We now turn our attention to some aspects of these induced maps.

Let $M$ be a rational homology 3--sphere, and let $K$ be a null-homologous knot in $M$.  Let $M_0$ be the result of $N$--surgery on $K$, and let $M_1$ be the result of $(N+1)$--surgery on $K$.  This is a special case of a broader context in which the triple $(M, M_0, M_1)$ is called a \emph{triad}.  For a discussion relevant to this subsection, see \cite{oz-sz:lectures}.  Implicit in this set up is a triple of cobordisms obtained by 2--handle addition (cf. Subsection \ref{subsec:triad}).

$$\begin{tikzpicture}
  \matrix (m) [matrix of math nodes,row sep=4em,column sep=4em,minimum width=2em] {
     M& \  & M_1 \\
     \  & M_0 & \  \\};
  \path[-stealth]
    (m-1-1) edge node [pos=.6,left] {$W_1\ $} (m-2-2)
    (m-2-2) edge node [pos=.4,right] {$\ W_2$} (m-1-3)
    (m-1-3) edge node [above] {$W_3$} (m-1-1);
\end{tikzpicture}$$

\begin{theorem}
Let $(M, M_0, M_1)$ be a triad, then there exist exact triangles relating their Heegaard Floer homologies:
$$\begin{array}{cc}
\begin{tikzpicture}
  \matrix (m) [matrix of math nodes,row sep=2.8em,column sep=1.25em,minimum width=1em] {
     \HFhat(M)& \  & \HFhat(M_1) \\
     \  & \HFhat(M_0) & \  \\};
  \path[-stealth]
    (m-1-1) edge node [pos=.6,left] {$\widehat F_1\ $} (m-2-2)
    (m-2-2) edge node [pos=.4,right] {$\ \widehat F_2$} (m-1-3)
    (m-1-3) edge node [above] {$\widehat F_3$} (m-1-1);
\end{tikzpicture}
&
\begin{tikzpicture}
  \matrix (m) [matrix of math nodes,row sep=3em,column sep=1.25em,minimum width=1em] {
     HF^+(M)& \  & HF^+(M_1) \\
     \  & HF^+(M_0) & \  \\};
  \path[-stealth]
    (m-1-1) edge node [pos=.6,left] {$F^+_1\ $} (m-2-2)
    (m-2-2) edge node [pos=.4,right] {$\ F^+_2$} (m-1-3)
    (m-1-3) edge node [above] {$F^+_3$} (m-1-1);
\end{tikzpicture}
\end{array}$$
These maps are induced by the 2--handle cobordisms relating the triad.  
\end{theorem}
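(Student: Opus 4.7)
The plan is to follow the standard Ozsv\'ath--Szab\'o strategy of counting holomorphic polygons in symmetric products of a common Heegaard surface. First, I would construct a pointed Heegaard \emph{triple} diagram $(\Sigma, \alpha, \beta, \gamma, z)$ in which the three collections of attaching circles each consist of $g$ pairwise disjoint simple closed curves, with $\beta$ and $\gamma$ obtained from $\alpha$ by replacing a single curve $\alpha_g$ by curves $\beta_g$ and $\gamma_g$ representing the two surgery framings of $K$. Because the surgery coefficients differ by one, one can arrange (after handleslides among the remaining $\alpha_i$) that $\alpha_g, \beta_g, \gamma_g$ lie in a neighborhood of the projection of $K$ to $\Sigma$, are mutually transverse, and pairwise meet in a single point forming a small triangle disjoint from $z$. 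With this choice, the three pairwise subdiagrams are Heegaard diagrams for $M$, $M_0$, and $M_1$, respectively.

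Next I would define the maps $\widehat F_i$ as the standard $2$--handle cobordism maps: at the chain level they are sums, over homotopy classes of Whitney triangles with prescribed intersection-point vertices, of counts of isolated pseudo-holomorphic triangles whose boundary lies on the Lagrangian tori $T_\alpha, T_\beta, T_\gamma$ in $\mathrm{Sym}^g(\Sigma)$ and which avoid the basepoint $z$. Each such map decomposes as a sum over $\spinc$ structures on the triple cobordism, recovering the decomposition coming from the $2$--handle cobordisms $W_i$.

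The heart of the argument is exactness. I would first show that $\widehat F_{i+1}\circ \widehat F_i$ is null-homotopic by expressing it, via the standard degeneration of a one-parameter family of conformal structures on the rectangle, as a count of holomorphic \emph{rectangles} with boundary on four Lagrangians $T_\alpha, T_\beta, T_\gamma, T_{\gamma'}$, where $\gamma'$ is a small isotopic translate of $\gamma$. For our special triangle-forming configuration the only such rectangles come in canceling pairs, yielding a chain homotopy to zero. To upgrade this to honest exactness, one analyzes the codimension-one ends of the moduli space of holomorphic rectangles when instead the fourth set of curves is a small isotopic translate of $\alpha$; these ends produce a chain homotopy equivalence between the mapping cone of $\widehat F_i$ and the chain complex of the third manifold in the triad, which is equivalent to the long exact sequence.

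Finally, the $HF^+$ statement follows by running the same polygon-counting construction on $CF^\infty$: the triangle maps are $U$--equivariant and preserve the subcomplex $CF^-$, so they descend to well-defined maps on $HF^+$, and the chain homotopies above, being themselves polygon counts, are automatically $U$--equivariant. The main technical obstacle throughout is purely analytic, namely transversality together with the Gromov-type compactness and gluing theory for moduli spaces of holomorphic triangles and rectangles in $\mathrm{Sym}^g(\Sigma)$, needed to identify the codimension-one degenerations of rectangles precisely with the compositions and chain homotopies appearing in the triangle argument; all of this is carried out in \cite{oz-sz:3-manifolds_1}, so in practice I would cite that machinery rather than redevelop it.
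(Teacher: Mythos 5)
This theorem is not proved in the paper at all: it is quoted as standard background, with the argument deferred to Ozsv\'ath and Szab\'o (the paper points the reader to \cite{oz-sz:lectures} for the relevant discussion). Your outline is therefore not a ``different route'' from the paper's proof --- it is a sketch of the original published proof that the paper is citing, and in broad strokes it is the right one: the maps are holomorphic-triangle counts associated to the 2--handle cobordisms, and exactness is extracted from degenerations of higher polygons. Two points in your sketch are off, however. First, the Heegaard multi-diagram is set up incorrectly: one does not take a single triple $(\Sigma,\alpha,\beta,\gamma)$ whose three pairwise subdiagrams are $M$, $M_0$, $M_1$. Rather, one fixes $\alpha$ and takes three sets $\beta,\gamma,\delta$ differing only in the last curve, so that $(\Sigma,\alpha,\beta)=M$, $(\Sigma,\alpha,\gamma)=M_0$, $(\Sigma,\alpha,\delta)=M_1$, while $(\Sigma,\beta,\gamma)$, $(\Sigma,\gamma,\delta)$, $(\Sigma,\delta,\beta)$ are connected sums of $S^1\times S^2$'s; the maps $\widehat F_i$ are obtained by feeding the top-dimensional Floer class of those $\#(S^1\times S^2)$ diagrams into the triangle-counting pairing $\CFhat(\alpha,\beta)\otimes\CFhat(\beta,\gamma)\to\CFhat(\alpha,\gamma)$. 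With your setup the triangle count is a pairing between two of the three groups in the triad, not a map from one to another. Second, showing that consecutive composites are null-homotopic (via rectangle counts) does not by itself give exactness; one also needs the quasi-isomorphism half of the ``triangle detection lemma,'' namely that $\widehat F_{i+2}\circ H_i + H_{i+1}\circ \widehat F_i$ is chain homotopic to the identity, and verifying this in the Ozsv\'ath--Szab\'o argument requires counting holomorphic pentagons, not only rectangles. Since you explicitly defer the analytic machinery to \cite{oz-sz:3-manifolds_1}, these are repairable misstatements rather than fatal gaps, but as written the setup would not even produce the maps appearing in the statement.
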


Moreover, the grading shifts associated to these induced maps are given by the following formula:
$$gr(F^\circ_i) = gr(F^\circ_i(x))-gr(x) = \frac{(c_1(\frak t))^2 -2\chi(W_i)-3\sigma(W_i)}{4}.$$

This set-up can be applied to the 3--manifolds and 4--dimensional cobordisms introduced in Section \ref{section:geometry}.  Below, we will use these exact triangles to understand the Heegaard Floer homology of $Z$ (i.e., $M_1$) via the Heegaard Floer homology of $X$ and $Y$ (i.e., $M$ and $M_0$), which are more tractable, since they are each realized by surgery on knots in $S^3$.

In addition to this nice set-up, we have two important theorems about the behavior of these maps on certain types of cobordisms.

\begin{theorem}[\cite{oz-sz:absolute}]\label{thm:infty_isom}
Let $W$ be a cobordism between rational homology 3--manifolds obtained by surgery on a knot such that $b_2^+(W)=0$.  Then $F^\infty_{W,\frak t}$ is an isomorphism for all $\frak t\in\spinc(W)$.
\end{theorem}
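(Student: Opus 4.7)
The plan is to exploit the extreme rigidity of $HF^\infty$ for rational homology 3--spheres. Since both ends $M$ and $M'$ of $W$ are rational homology spheres, we have $HF^\infty(M,\frak s) \cong HF^\infty(M',\frak s') \cong \mathcal T^\infty$ as graded $\F[U,U^{-1}]$--modules. The induced cobordism map $F^\infty_{W,\frak t}$ is a homogeneous $\F[U,U^{-1}]$--module homomorphism between two copies of the rank-one free graded module $\mathcal T^\infty$, and since $\mathcal T^\infty$ admits no nontrivial proper graded $\F[U,U^{-1}]$--submodules, any such map must be either zero or an isomorphism. Consequently, the entire content of the theorem is the nonvanishing assertion $F^\infty_{W,\frak t}\neq 0$.

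To establish nonvanishing, I would apply the $HF^\infty$ version of the surgery exact triangle. Since $W$ is a single 2--handle cobordism by hypothesis, it fits into a triad $(M, M_0, M_1)$ of the type introduced in Subsection \ref{subsec:triad}, giving rise to an exact sequence relating the three cobordism-induced maps $F^\infty_1$, $F^\infty_2$, $F^\infty_3$. By the rigidity reduction of the first paragraph, each $F^\infty_i$ is either zero or an isomorphism, and exactness of an exact triangle of rank-one modules forces precisely one of the three maps to vanish and the other two to be isomorphisms. It remains to verify that the vanishing map is not $F^\infty_{W,\frak t}$.

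The main obstacle, and where the hypothesis $b_2^+(W) = 0$ enters essentially, is identifying which of the three cobordisms in the triad contributes the zero map. A careful analysis of the intersection forms on the three 2--handle cobordisms (along the lines of the computation in Subsection \ref{subsec:triad}, where the self-intersections of the generating surfaces $[S_i]$ are computed from the relevant framings and shown to yield one positive-definite and two negative-definite cobordisms) reveals that the zero map must come from the unique positive-definite cobordism. Since $W$ is assumed to be negative semi-definite, $F^\infty_{W,\frak t}$ must be one of the two isomorphisms. The grading shift is then automatically dictated by the formula $(c_1(\frak t)^2 - 2\chi(W) - 3\sigma(W))/4$, which closes the argument. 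The subtlety to be careful about is that the surgery exact triangle sums over $\spinc$ structures, so one must track which $\spinc$ structures on the cobordisms restrict compatibly with $\frak t$ in order to isolate the desired component of the triangle.
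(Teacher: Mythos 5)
This theorem is not proved in the paper at all --- it is quoted from Ozsv\'ath--Szab\'o \cite{oz-sz:absolute} --- so your proposal has to stand on its own. Your first paragraph is fine: a grading-homogeneous, $U$--equivariant endomorphism of $\mathcal T^\infty$ is indeed either zero or an isomorphism, so the content is the nonvanishing of $F^\infty_{W,\frak t}$.

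The exact-triangle argument that follows has a genuine gap. First, the triangle is not ``an exact triangle of rank-one modules'': $HF^\infty(M)$, $HF^\infty(M_0)$, $HF^\infty(M_1)$ split as direct sums of towers indexed by $\spinc$ structures, and the three index sets have different cardinalities (for the triads in this paper, $|H_1|$ equals $n$, $n(n+1)$, and $n^2$ respectively), while each $F_i^\infty$ is a sum over infinitely many $\spinc$ structures on the cobordism --- a sum which, as Section \ref{section:calculations} points out, is not even well defined on $HF^\infty$ without completing coefficients. So ``each map is $0$ or an isomorphism'' is simply not the structure of the triangle. Second, even in your idealized model the conclusion fails: a three-term exact triangle of nonzero rank-one modules in which every map is either zero or an isomorphism cannot exist (if $h=0$ then exactness forces $f$ and $g$ to be isomorphisms, and exactness at the middle term then gives $0=\ker g=\im f=B$), so no case analysis of ``which map vanishes'' can close the argument. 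Third, the step you defer to ``a careful analysis of the intersection forms'' --- that the positive-definite cobordism in the triad induces the zero map on $HF^\infty$ --- is itself a nontrivial theorem of Ozsv\'ath--Szab\'o (the $b_2^+>0$ vanishing result for $F^\infty$), and it is not the same as, nor implied by, the vanishing of $\widehat F$ in Theorem \ref{thm:hat_vanish}; the latter routinely holds on cobordisms for which $F^\infty$ is an isomorphism. The actual proof in \cite{oz-sz:absolute} establishes the isomorphism for negative semi-definite two-handle additions directly (via the structure of $HF^\infty$ with twisted coefficients and its $\Lambda^*H_1$--module structure), rather than by elimination inside a surgery triangle.
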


The following theorem is implicit in the work of Ozsv\'ath and Szab\'o \cite{oz-sz:absolute}, and can also be found in \cite{lisca-stipsicz}.

\begin{theorem}\label{thm:hat_vanish}
Let $W$ be a cobordism induced by attaching a 2--handle to a rational homology 3--sphere, and let $\frak t\in\spinc(W)$.  Suppose that $W$ contains a smoothly embedded, closed, orientable surface $\Sigma$ with $g(\Sigma)>0$ such that
$$\Sigma\cdot\Sigma\geq 0 \text{ and } |\langle c_1(\frak t),[\Sigma]\rangle| + \Sigma\cdot\Sigma > 2g(\Sigma) - 2.$$
Then $\widehat F_{W,\frak t}$ is zero.
\end{theorem}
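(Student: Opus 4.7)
The plan is to reduce the statement to the adjunction inequality of Ozsv\'ath and Szab\'o, which asserts that whenever a $\spinc$ cobordism $(W,\frak t)$ contains a smoothly embedded, closed, oriented surface $\Sigma$ with $g(\Sigma)>0$, $\Sigma\cdot\Sigma\geq 0$, and $|\langle c_1(\frak t),[\Sigma]\rangle|+\Sigma\cdot\Sigma > 2g(\Sigma)-2$, the induced cobordism map vanishes on each flavor of Heegaard Floer homology. The desired statement is precisely the hat version of this inequality for a single 2-handle cobordism. A brief sanity check: if $b_2^+(W)=0$, then $\Sigma\cdot\Sigma\geq 0$ forces $[\Sigma]$ to be rationally null-homologous, which makes the left-hand side of the inequality zero, contradicting $g(\Sigma)>0$; so the hypotheses implicitly guarantee $b_2^+(W)\geq 1$.

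The argument itself takes place at the chain level. Choose a Heegaard triple diagram $(F,\vec\alpha,\vec\beta,\vec\gamma,w)$ subordinate to the framed knot specifying the 2-handle attachment, arranged so that $[\Sigma]\in H_2(W;\Z)$ lifts to a triply-periodic domain $\mathcal P$ in $F$. By definition, $\widehat f_{W,\frak t}$ counts pseudoholomorphic triangles $\phi$ with $\frak s_w(\phi)=\frak t$ and $n_w(\phi)=0$. A standard application of the first Chern class formula (see \cite{oz-sz:3-manifolds_1}) rewrites the quantity $\langle c_1(\frak t),[\Sigma]\rangle+\Sigma\cdot\Sigma$ in terms of the multiplicities of $\mathcal D(\phi)$ along $\mathcal P$; the adjunction hypothesis then forces these multiplicities to be so large (in absolute value) that positivity of domains implies $n_w(\phi)>0$ for any triangle whose $\spinc$ class could be $\frak t$. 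No such triangle can therefore contribute to $\widehat f_{W,\frak t}$, so the chain-level map vanishes identically and $\widehat F_{W,\frak t}=0$.

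The main obstacle is the analytic setup: arranging the Heegaard triple compatibly with a tubular neighborhood of $\Sigma$ so that the periodic domain $\mathcal P$ is explicit, and correctly reading off the Chern-class pairing in terms of triangle multiplicities. This technical work is carried out in \cite{oz-sz:absolute} for $HF^\infty$ and $HF^+$; the same argument, applied to $\CFhat$, yields the hat version, which is also recorded in \cite{lisca-stipsicz}. Granting those references, the present theorem follows directly.
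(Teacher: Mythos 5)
The paper does not actually prove this statement: it records the theorem with the remark that it is implicit in \cite{oz-sz:absolute} and can be found in \cite{lisca-stipsicz}, and no proof environment follows. Your proposal ultimately does the same thing---it defers the technical content to exactly those two references---so at that level it matches the paper, and your sanity check (the hypotheses force $[\Sigma]\neq 0$ rationally, hence $b_2^+(W)=1$ for a single $2$--handle cobordism between rational homology spheres) is correct and consistent with how the theorem is applied to $W_3$ in Section 5. The one caveat concerns the mechanism you sketch in the middle paragraph. A Chern-class formula for triply-periodic domains plus positivity of domains is not how the cited proofs go, and it would not work as stated for an arbitrary embedded surface $\Sigma$: in a Heegaard triple subordinate to the $2$--handle, the triply-periodic domain computes pairings against the capped-off Seifert surface, and any genus entering such a combinatorial formula is the genus of that periodic domain, not of a possibly lower-genus surface $\Sigma$ that is merely homologous to (a multiple of) it. The arguments in the literature instead split $W$ along the unit circle bundle of a tubular neighborhood of $\Sigma$, apply the composition law, and use the three-dimensional adjunction inequality (equivalently, the structure of $HF^+$ of circle bundles over surfaces) to see that the middle piece annihilates the relevant $\spinc$ structure; the same degeneration gives the hat version. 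Note also that vanishing on the $HF^\infty$ level holds for the cruder reason that $b_2^+(W)>0$, whereas the hat statement genuinely needs the genus hypothesis (a sphere of square $+1$ gives a nonvanishing $\widehat F$). Since you explicitly grant the references for the technical work, the proposal is acceptable in the same sense the paper's treatment is, but the heuristic should not be mistaken for the actual argument.
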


For example, if the 2--handle attachment occurs along a knot with large positive framing relative to its genus, the induced map $\widehat F_{W,\frak t}$ will vanish for all $\frak t\in\spinc(W)$.




\subsection{Knot complexes}\ 

A  rationally null-homologous knot $K$ in $M$ induces a second filtration on $CF^\infty(M,\frak s)$, which thus becomes a $\Z\oplus\Z$--filtered, $\Q$--graded complex, and is denoted $CFK^\infty(M,K,\frak s)$.  The action of $U$ lowers both filtrations by one, and lowers the grading by two.  For our purposes, the most important aspect of this complex is that it can be used to determine the Heegaard Floer homology of surgeries on $K$.

For a positive integer $p$, let $\frak s_m$ denote the element of $\spinc(S^3_p(K))$ which is $\spinc$ cobordant to the unique $\spinc$ structure on $S^3$ via an element $\frak t_m\in\spinc(W)$ (where $W$ is the 2--handle cobordism induced by $p$--surgery) satisfying
$$\langle c_i(\frak t_m, [S]\rangle +p = 2m,$$
where $S$ denotes a Seifert surface for $K$, capped off with the core of the 2--handle.  Then the following theorem is stated as in \cite{HLR}, but is originally proved in \cite{oz-sz:knots}.

\begin{theorem}\label{thm:surgery_formula}
Let $K$ be a knot in $S^3$, and suppose that $g(K)=g$.  Let $p\geq 2g-1$.  Then for all $m$ satisfying $|m|\leq \frac{1}{2}(p-1)$, there is a chain homotopy equivalence of graded complexes over $\F[U]$:
$$CF_k^+(S^3_p(K),\frak s_m)\simeq CFK^\infty_l(M,K,\frak s)_{\{\max(i,j-m)\geq 0\}},$$
where
$$k=l+\frac{p-(2m-p)^2}{2p}.$$
\end{theorem}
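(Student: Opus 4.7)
The plan is to follow the original Ozsv\'ath--Szab\'o large surgery argument from \cite{oz-sz:knots}, whose essential content is that for surgery coefficients large relative to $2g-1$, the Floer complex of $S^3_p(K)$ is literally computed, summand by summand over $\spinc$ structures, by suitable quadrants of $CFK^\infty$.

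First I would fix a genus-$g'$ doubly-pointed Heegaard diagram $(\Sigma,\boldsymbol\alpha,\boldsymbol\beta,w,z)$ for $(S^3,K)$, with $\alpha_1$ a meridian of $K$ separating the two basepoints, so that $CFK^\infty(S^3,K)$ is computed from the intersection data with filtrations $i=-n_w$ and $j=i-n_z+n_w$. Then I would produce a Heegaard diagram for $S^3_p(K)$ by replacing $\alpha_1$ with a curve $\gamma$ realizing the $p$-framed longitude of $K$. Using the standard winding trick, $\gamma$ can be arranged as a once-wound Seifert longitude that makes $p$ extra intersections with $\alpha_1$ in a thin strip containing $z$; this gives a natural bijection between $T_\gamma\cap T_\beta$ and pairs $(\mathbf x, k)$, where $\mathbf x\in T_\alpha\cap T_\beta$ and $k$ is an integer identifying the copy of the intersection point within the winding region.

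Next I would analyze the $\spinc$ decomposition. A Poincar\'e dual count of intersections of the generator with the capped-off Seifert surface $S$ identifies the $\spinc$ structure assigned by the new generator with $\frak s_m$ precisely when a certain linear combination of $k$ and the relative filtration $j(\mathbf x)$ falls into a residue class mod $p$. The hypothesis $|m|\leq (p-1)/2$ together with $p\geq 2g-1$ forces this residue to be realized by a unique integer $k$ for each $\mathbf x$, and the resulting subset is exactly the set of generators $[\mathbf x,i,j]$ of $CFK^\infty(S^3,K)$ with $\max(i,j-m)\geq 0$, since the truncation $\max(i,j-m)\geq 0$ is precisely the quotient that represents $CF^+$ after passing to the surgered manifold.

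Then I would check that the differential on the surgery side matches the differential on the truncated knot complex. Any Whitney disk $\phi$ in $\mathrm{Sym}^{g'}$ for the new diagram has a multiplicity $n_w(\phi)$ and $n_z(\phi)$, and its contribution to the surgered complex is controlled by these two numbers in exactly the way the knot filtration is. The key input, and the main technical hurdle, is ruling out ``extraneous'' holomorphic disks living in the winding region that would not correspond to disks for the original knot diagram: here one uses the assumption $p\geq 2g-1$ together with the admissibility of the wound diagram to show that any such disk would either force $n_w>0$ beyond the prescribed $U$-power or would need to cross the basepoint $z$ in a way incompatible with the filtration, yielding a chain homotopy equivalence of $\F[U]$-complexes. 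This is the step I expect to be the main obstacle, since it is the content that genuinely uses the genus bound.

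Finally, the grading shift $k-l=\frac{p-(2m-p)^2}{2p}$ comes out of the standard cobordism grading formula applied to the 2-handle cobordism $W_p(K)\colon S^3\to S^3_p(K)$ with the $\spinc$ structure $\frak t_m$ normalized by $\langle c_1(\frak t_m),[S]\rangle +p=2m$: the general formula
\[
\mathrm{gr}(F^\circ_{W,\frak t})=\frac{c_1(\frak t)^2-2\chi(W)-3\sigma(W)}{4}
\]
of Subsection \ref{subsec:cobordism}, specialized to $\chi(W)=1$, $\sigma(W)=-1$, $[S]\cdot[S]=p$, and $c_1(\frak t_m)^2=(2m-p)^2/p$, produces the stated shift and completes the proof.
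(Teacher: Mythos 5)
You should first be aware that the paper does not prove this statement at all: it is quoted ``as in \cite{HLR}'' and attributed to \cite{oz-sz:knots}, so there is no internal proof to match. Your outline does follow the architecture of the original Ozsv\'ath--Szab\'o large-surgery argument (wind the $p$-framed longitude near the meridian, identify generators in the $\spinc$ structure $\mathfrak s_m$ with the towers of $C\{\max(i,j-m)\geq 0\}$, match differentials by controlling disks through the winding region), and you correctly flag that the genus bound $p\geq 2g-1$ is used in the holomorphic-disk analysis, which is the real content. Two slips in the combinatorial part: the wound curve replaces the meridian, so its new intersection points lie on the curve of the \emph{opposite} family dual to the meridian, not on ``$\alpha_1$'' itself; and the identification with $C\{\max(i,j-m)\geq 0\}$ is an isomorphism of $\F[U]$-modules matching towers, so asserting that the truncation ``is precisely the quotient that represents $CF^+$'' restates the theorem rather than deriving it --- that is the step one must actually carry out.

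The genuine error is in your final step. The trace of $p$-surgery with $p>0$ is positive definite: $[S]\cdot[S]=p$ forces $\sigma(W)=+1$, not $-1$, and substituting the correct values into $\frac{c_1(\mathfrak t_m)^2-2\chi(W)-3\sigma(W)}{4}$ yields $\frac{(2m-p)^2-5p}{4p}$, which is not the stated shift. More fundamentally, the degree-shift formula for $W\colon S^3\to S^3_p(K)$ cannot by itself locate the bottom of the tower of $HF^+(S^3_p(K),\mathfrak s_m)$: since $b_2^+(W)=1$, the map $F^\infty_{W,\mathfrak t_m}$ vanishes, so the image of the bottom of the tower of $HF^+(S^3)$ need not be the bottom of the target tower. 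The standard fix is to observe that the shift $k-l$ depends only on $(p,m)$ and to evaluate on the unknot, where $S^3_p(U)=L(p,1)$, the complex $C\{\max(i,j-m)\geq 0\}$ has its bottom generator in grading $0$, and $d(L(p,1),m)=\frac{p-(2m-p)^2}{4p}$. (This check also reveals that the denominator in the statement should be $4p$ rather than $2p$, since, as noted after Equation \ref{eqn:lens_corr_terms}, the theorem must reproduce the lens-space correction terms when $K$ is the unknot.)
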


Equation \ref{eqn:lens_corr_terms} can be viewed a special case of this (i.e., when $K$ is the unknot).  We make extensive use of this theorem in the calculations required by the proof in Section \ref{section:calculations}, which can be found in Appendix \ref{appendix}.

One corollary of this set-up is that the correction terms of manifolds obtained by surgery on knots can be compared to those of lens spaces.  We refer the reader to \cite{ni-wu:cosmetic,ni-wu:rational} for a nice development.  In short,  by considering $CFK^\infty(S^3,K)$, one can define two sequences of nonnegative integers $V_k, H_k$ for $k\in\Z$ satisfying
$$V_k = H_{-k},\hspace{.25in} V_k\geq V_{k+1}\geq V_k-1, \hspace{.25in} V_{g(K)}=0.$$
It turns out that the correction terms of surgeries on $K$ are determined by these integers.

\begin{theorem}\label{thm:ni-wu}
Let $K$ be a knot in $S^3$.  Then,
$$d(S^3_p(K),i) = d(L(p,1),i) - 2\max\{V_i,H_{i-p}\}.$$
\end{theorem}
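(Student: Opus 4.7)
The plan is to apply Theorem \ref{thm:surgery_formula} to re-express the Heegaard Floer homology of $p$-surgery on $K$ in terms of the knot Floer complex $CFK^\infty(S^3,K)$, and then to extract the correction term using the two natural projections that define the invariants $V_\bullet$ and $H_\bullet$.

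For each $i \in \Z_p$, let $m \in \{-(p-1)/2, \ldots, (p-1)/2\}$ be its unique representative. Since $p \geq 2g(K) - 1$, Theorem \ref{thm:surgery_formula} yields a graded chain homotopy equivalence, up to the global grading shift $d(L(p,1), i) = \frac{p - (2m-p)^2}{4p}$, between $CF^+(S^3_p(K), \mathfrak{s}_i)$ and the subquotient
$$A_m^+ := CFK^\infty(S^3,K)_{\{\max(i', j'-m) \geq 0\}}.$$
It therefore suffices to compute the bottom grading $d(A_m^+)$ of the tower in $H_\ast(A_m^+)$.

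To compute $d(A_m^+)$, set $B^+ := CFK^\infty(S^3,K)_{\{i' \geq 0\}}$, whose homology is $HF^+(S^3) = \mathcal{T}^+$ with trivial $d$-invariant. There are two natural $U$-equivariant chain maps $A_m^+ \to B^+$: the \emph{vertical projection} $v_m^+$, obtained by killing the subcomplex supported in $\{i'<0\}$, and the \emph{horizontal projection} $h_m^+$, obtained by killing the subcomplex in $\{j'<m\}$ and identifying $\{j' \geq m\}$ with $B^+$ via the knot Floer $(i' \leftrightarrow j')$-symmetry together with an appropriate grading-matching shift. The invariants $V_m$ and $H_m$ are defined as the non-negative integers recording the $U$-power by which each projection maps a tower generator of $H_\ast(A_m^+)$ into $\mathcal{T}^+$; the $CFK^\infty$-symmetry immediately gives $V_k = H_{-k}$. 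A standard homological argument then identifies $d(A_m^+)$ as $-2V_m$ when $m \geq 0$ (with $v_m^+$ pinning the tower bottom) and as $-2H_m$ when $m \leq 0$ (with $h_m^+$ pinning it).

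Finally, translate to the $\Z_p$-indexing $i \in \{0, \ldots, p-1\}$. For $0 \leq i \leq (p-1)/2$, one has $m = i$ and $d(A_m^+) = -2V_i$; for $(p+1)/2 \leq i \leq p-1$, one has $m = i-p$ and $d(A_m^+) = -2H_{i-p}$. In each case the hypothesis $p \geq 2g(K)-1$ forces the other candidate to vanish ($H_{i-p} = V_{p-i} = 0$ when $p-i \geq g$, or $V_i = 0$ when $i \geq g$), so both cases combine uniformly into $d(A_m^+) = -2\max\{V_i, H_{i-p}\}$. Adding the grading shift $d(L(p,1), i)$ from the surgery formula then produces the claimed formula. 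The main obstacle is the homological identification of $d(A_m^+)$ via the appropriate one of $V_m$ or $H_m$, which requires analyzing how the tower in $H_\ast(A_m^+)$ interacts with both projections given the staircase-like structure of $CFK^\infty$.
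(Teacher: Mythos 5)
The paper does not actually prove this statement---it is imported wholesale from Ni--Wu \cite{ni-wu:cosmetic,ni-wu:rational}---so your attempt has to be measured against their argument rather than anything in the text. Your sketch is the standard large-surgery derivation and it does land on the correct formula, but it has a genuine gap in scope: every step runs through Theorem \ref{thm:surgery_formula}, so at best you obtain the formula for $p\geq 2g(K)-1$. The theorem as stated holds for every positive integer $p$; for small $p$ the $\spinc$ summand of $CF^+(S^3_p(K))$ is not chain homotopy equivalent to a single hook complex $A_m^+$, and Ni--Wu extract the correction term from the mapping cone (rational/integer surgery) formula instead---which is also where the genuine competition between $V_i$ and $H_{i-p}$ inside the maximum comes from. (For this paper's applications only large surgeries occur, so the restricted statement would suffice here, but it is not the theorem you set out to prove.)

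The second issue is that the step carrying essentially all of the content---``a standard homological argument then identifies $d(A_m^+)$ as $-2V_m$ when $m\geq 0$ and as $-2H_m$ when $m\leq 0$''---is asserted rather than argued, and the grading bookkeeping it hides is exactly where one can go wrong. In the Maslov grading inherited from $CFK^\infty(S^3,K)$, the vertical projection $v_m^+$ is degree-preserving and pins the bottom of the tower of $H_*(A_m^+)$ at $-2V_m$ for \emph{every} $m$; the horizontal projection $h_m^+$ is $U$-equivariant but shifts degree (the $U^m$-translation and flip in its definition), which is why one has the relation $H_m=V_m+m$ and why your two formulas for $d(A_m^+)$ are not simultaneously true in any single normalization of the grading on $A_m^+$. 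Your two cases therefore implicitly use two different normalizations, and the final step---``adding the grading shift $d(L(p,1),i)$''---only produces the stated theorem once one checks that the absolute grading supplied by the surgery formula matches the $v$-normalization for $m\geq 0$ and the $h$-normalization for $m<0$ (equivalently, that the shift constant is exactly the one making the unknot return $d(L(p,1),i)$). That verification is the real content of the large-surgery case; your phrase ``appropriate grading-matching shift'' is precisely where it is being assumed. If one is willing to quote that $v_m^+$ and $h_m^+$ act on towers as $U^{V_m}$ and $U^{H_m}$ with their cobordism-induced grading shifts, one may as well quote the theorem itself, which is what the paper does.
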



\section{Proof of Theorem \ref{thm:main}}\label{section:calculations}

In this section, we prove the following proposition, whose corollary, together with Corollary \ref{coro:doubly_slice}, implies Theorem \ref{thm:main}.  Recall the geometric set-up from Section \ref{section:geometry}. In particular, let $\mathcal Z_{p,k_p}$ be the double branched cover of the knot $\mathcal K_{p,k_p}$.

\begin{proposition}\label{prop:X_corr_terms}
The difference $\mathcal D(L(p,1)\#L(p,-1)) - \mathcal D(\mathcal Z_{p,k_p})$ is given by the following matrix $\mathcal M$.
$$
{
\left[
\setlength{\arraycolsep}{3.5pt}
\begin{array}{cccccccccccccccccccccc}
0 & \cdots & 0 & 0 & 0 & 0 & 0 & 0 & 0 & 0 & 0 & 0 & 0 & 0 & \cdots & 0 & 0&  \cdots & 0 \\
\vdots & & \vdots &\vdots & \vdots & \vdots & \vdots & \vdots & \vdots & \vdots & \vdots & \vdots & \vdots & \vdots && \vdots& \vdots & & \vdots \\
0 & \cdots & 0 & 0 & 0 & 0 & 0 & 0 & 0 & 0 & 0 & 0 & 0 & 0 & \cdots & 0 & 0&  \cdots & 0 \\
2 & \cdots  & \textcolor{Black}{2} &  0 & \textcolor{Black}{0} & 0 & 0 & 0 &0 & 0 & 2 & 2 & 2 & 2 &  \cdots & 2 & 2 &  \cdots & 2 \\
2 & \cdots  & 2 & \textcolor{Black}{2} & 0 & \textcolor{Black}{0} & 0 & 0 & 0 & 0 & 2  & 2 & 2 & 2 & \cdots & 2 & 2 &  \cdots & 2 \\
4 & \cdots  & 4 & 4 & \textcolor{Black}{2} & 0 & \textcolor{Black}{0} & 0 & 0 & 0 & 4 & 4 & 4 & 4 & \cdots & 4 & 4 &  \cdots & 4 \\
4 & \cdots  & 4 & 4 & 4 & \textcolor{Black}{2} & 0 & \textcolor{Black}{0} & 0 & 0 & 4 & 4 & 4 & 4 & \cdots & 4 & 4 &  \cdots & 4\\
\vdots &  & \vdots & \vdots &  &  & \textcolor{Black}{\ddots} & \ddots & \textcolor{Black}{\ddots} & & \vdots & \vdots & \vdots &\vdots & & \vdots & \vdots& & \vdots \\
2k_p & \cdots  & 2k_p & 2k_p & \cdots & 2k_p & 2k_p & \textcolor{Black}{2} & 0 & \textcolor{Black}{0} & 2k_p & 2k_p & 2k_p & 2k_p & \cdots & 2k_p & 2k_p  & \cdots & 2k_p \\
2k_p & \cdots  & 2k_p & 2k_p & \cdots & 2k_p & 2k_p & 2k_p & \textcolor{Black}{2} & 0 & \textcolor{Black}{2} & 2k_p & 2k_p &2k_p &  \cdots & 2k_p & 2k_p & \cdots & 2k_p \\
2k_p & \cdots  & 2k_p & 2k_p & \cdots & 2k_p & 2k_p & 2k_p & 2k_p & \textcolor{Black}{0} & 0 & \textcolor{Black}{2} & 2k_p & 2k_p & \cdots & 2k_p & 2k_p & \cdots & 2k_p \\
\vdots &  & \vdots & \vdots & & \vdots & \vdots  & \vdots  & \vdots & & \textcolor{Black}{\ddots} & \ddots & \textcolor{Black}{\ddots} & & & \vdots   & \vdots&  & \vdots \\
4 & \cdots  & 4 & 4 & \cdots & 4 & 4 & 4 & 4 & 0 & 0 & \textcolor{Black}{0} & 0 & \textcolor{Black}{2} & 4 & 4 & 4 &  \cdots & 4 \\
4 & \cdots  & 4 & 4 & \cdots& 4 & 4 & 4 &4 & 0 & 0 & 0 & \textcolor{Black}{0} & 0 & \textcolor{Black}{2} & 4 & 4 &  \cdots & 4 \\
2 & \cdots  & 2 & 2 &  \cdots& 2 & 2 & 2 & 2 & 0 & 0 & 0 & 0 & \textcolor{Black}{0} & 0 & \textcolor{Black}{2} & 2 &  \cdots & 2 \\
2 & \cdots & 2 & 2 & \cdots & 2 & 2 & 2 & 2& 0 & 0& 0 & 0 & 0 & \textcolor{Black}{0} & 0 & \textcolor{Black}{2} &  \cdots & 2 \\
0 & \cdots  & 0 & 0 & \cdots & 0 & 0 & 0 &0 & 0 & 0 & 0 & 0 &0 & 0 & 0 & 0 &  \cdots & 0 \\
\vdots &  & \vdots & \vdots & & \vdots & \vdots & \vdots  & \vdots & \vdots & \vdots & \vdots & \vdots & \vdots& \vdots & \vdots & \vdots & & \vdots \\
0 & \cdots  & 0 & 0 & \cdots & 0 & 0 & 0 & 0 & 0 & 0 & 0 & 0 & 0 & 0 & 0 & 0 &  \cdots & 0 \\
\end{array}
\right].
}
$$
\end{proposition}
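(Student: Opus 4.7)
The plan is to realize $\mathcal{Z}_{p,k_p}$ as the vertex $Z$ of the triad $(X,Y,Z)$ from Subsection \ref{subsec:triad}, with $X = S^3_p(J\#J)$ and $Y = S^3_{p^2+p}(J\#J \# T_{p,p+1})$ for $J = \#_{k_p} D$, and then compare the correction terms of $Z$ to those produced by the analogous triad with $J$ replaced by the unknot, for which Example \ref{ex:triad_spaces} identifies $Z$ with $L(p,1)\#L(p,-1)$ and $Y$ with $L(p,1)\#L(p+1,-1)$. The correction terms of $Y$ in either case are computed from the large-surgery formula Theorem \ref{thm:surgery_formula} (whose hypotheses hold by Subsection \ref{subsec:coefficients}) and from Theorem \ref{thm:ni-wu}, with inputs the $CFK^\infty$ data of $\#_{2k_p} D$ and $T_{p,p+1}$ assembled in the Appendix.

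To transfer correction terms from $Y$ to $Z$, I would use the cobordism $W_2$. Since $b_2^+(W_2) = 0$, Theorem \ref{thm:infty_isom} gives that $F^\infty_{W_2, \frak{t}}$ is an isomorphism for every $\frak{t} \in \spinc(W_2)$, and therefore the tower parts of $HF^+(Y, [\frak{s}_i, \frak{s}_j])$ and $HF^+(Z, [\frak{s}_i, \frak{s}_k])$ are identified with grading shift
$$\Delta(\frak{t}) \;=\; \frac{c_1(\frak{t})^2 - 2\chi(W_2) - 3\sigma(W_2)}{4} \;=\; \frac{c_1(\frak{t})^2 + 1}{4}.$$
Combined with the $\spinc$ enumeration of Subsection \ref{subsec:enumerate}, this produces
$$d(Z, [\frak{s}_i, \frak{s}_k]) \;=\; \max_{\frak{t}}\bigl\{d(Y, [\frak{s}_i, \frak{s}_{j(\frak{t})}]) + \Delta(\frak{t})\bigr\},$$
where the maximum is taken over $\frak{t} \in \spinc(W_2)$ restricting to $[\frak{s}_i, \frak{s}_k]$ on $Z$. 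The decisive point is that $W_2$, the surface $S_2$, its self-intersection $-p^2-p$, and hence the function $\Delta$ and the $\spinc$ enumeration depend only on $p$ and not on $J$. Consequently, subtracting the infected and unknotted cases makes the $\Delta$-contribution cancel, and the difference $\mathcal{D}(L(p,1)\#L(p,-1)) - \mathcal{D}(\mathcal{Z}_{p,k_p})$ is exhibited as twice the difference of the $V_k$ invariants of $J\#J\#T_{p,p+1}$ and $T_{p,p+1}$ evaluated at a common optimal integer lift $m$.

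The matrix $\mathcal{M}$ then follows from the $V_k$-sequence of $\#_{2k_p}D$ recorded in the Appendix. That sequence is a staircase supported in a $(2k_p{+}1)$-wide window around zero with values $0,1,1,2,2,\ldots,k_p,k_p$; inside this window the Whitehead-double contribution governs the $V$-invariants of the connected sum $\#_{2k_p}D \# T_{p,p+1}$, while outside it the $V$-invariants agree with those of $T_{p,p+1}$ alone. Doubling yields the values $0, 2, 2, 4, 4, \ldots, 2k_p, 2k_p$ populating the central band of $\mathcal{M}$, with zeros on the complement, exactly matching the displayed matrix.

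The main obstacle will be the bookkeeping required to align $\spinc(Z) \cong \Z_p \oplus \Z_p$ with $\spinc(Y) \cong \Z_p \oplus \Z_{p+1}$ under $W_2$, to select the integer lift $m$ maximizing $\Delta(\frak{t})$, and to verify that this optimal $m$ coincides in the infected and unknotted settings so that the $\Delta$-cancellation is entry-wise legitimate in $\mathcal{M}$. Once this alignment is pinned down, each entry of $\mathcal{M}$ is read directly off the Appendix $V_k$-values.
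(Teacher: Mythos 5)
The decisive step in your argument --- ``the tower parts of $HF^+(Y,[\frak s_i,\frak s_j])$ and $HF^+(Z,[\frak s_i,\frak s_k])$ are identified with grading shift $\Delta(\frak t)$, hence $d(Z,[\frak s_i,\frak s_k])=\max_{\frak t}\{d(Y,[\frak s_i,\frak s_{j(\frak t)}])+\Delta(\frak t)\}$'' --- does not follow from Theorem \ref{thm:infty_isom}. The fact that $F^\infty_{W_2,\frak t}$ is an isomorphism, combined with $F^+\circ\pi=\pi\circ F^\infty$ and $U$--equivariance, only shows that the image of the bottom of the tower of $Y$ lands at grading $d(Y,\cdot)+\Delta(\frak t)$ \emph{inside} $\mathcal T^\infty(Z)$; after projecting to $HF^+(Z)$ this element is either the bottom of the tower of $Z$ or zero. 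One therefore only obtains the one-sided inequality $d(Z,[\frak s_i,\frak s_k])\geq d(Y,[\frak s_i,\frak s_{j}])+\Delta(\frak t)$, which can be strict for a general negative definite cobordism (this is exactly the standard Ozsv\'ath--Szab\'o correction-term inequality, not an equality). The entire substance of the paper's proof is establishing the reverse inequality, i.e.\ that $c_{i,k}=0$ in $F_{W_2,[\frak s_i,\frak s_k]}(y^+_{i,k})=U^{c_{i,k}}z^+_{i,k}$: it runs the surgery exact triangle for the triad $(X,Y,Z)$, invokes Theorem \ref{thm:hat_vanish} to show $\widehat F_3=0$ (this is where the framing/genus constraints of Subsection \ref{subsec:coefficients} are actually used), concludes by exactness that every generator $\hat z$ of the bottom of the $Z$--tower is in the image of $\widehat F_2$, and then bounds the gradings of possible preimages using Lemma \ref{lemma:reduced_bound}, whose proof requires a structural analysis of $CFK^\infty(S^3,J\#J\#T_{n,n+1})$. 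None of this appears in your proposal, so the transfer from $\mathcal D(Y)$ to $\mathcal D(Z)$ is unsupported.

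A secondary point: even if your $\max$ formula held, the cancellation of the $\Delta$--contribution between the infected and unknotted cases is not automatic, because the maximizing $\spinc$ structure on $W_2$ could differ in the two cases (the quantities $d(Y,\cdot)$ being maximized over are different). You flag this as bookkeeping, but in the paper's argument this issue is dissolved rather than resolved: the relevant $\spinc$ structure on $W_2$ is pinned down by the enumeration of Subsection \ref{subsec:enumerate} and the exact-triangle lemma, not by an optimization. The parts of your outline concerning $\mathcal D(X)$, $\mathcal D(Y)$, and the staircase $V_l$--values are consistent with the Appendix (Lemmas \ref{lemma:Y_corr_terms}, \ref{lemma:Vs_Sum} and Corollary \ref{coro:Y'_corr_terms}) and may be cited; the gap is confined to, but fatal for, the $Y\to Z$ step.
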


This matrix presentation makes use of the affine identification $\spinc(\mathcal Z_{p,k_p})\cong H^2(\mathcal Z_{p,k_p})\cong\Z_p\oplus\Z_p$, where $(i,j)\in\Z_p\oplus\Z_p$ is such that $|i|,|j|\leq \frac{p-1}{2}$.  There is an indeterminacy present that must be discussed.  In Appendix \ref{appendix}, the calculation of the correction terms for $Y =S^3_{p^2+p}(J\#J\#T_{p,p+1})$ (with $J=\#_{k_p}D$) is done in a way that forgets the explicit identification of $\spinc(Y)\cong H^2(Y)\cong\Z_p\oplus\Z_{p+1}$.  Thus, we lose track of the difference between $j$ and $-j$ in $\Z_{n+1}$ and between $i$ and $-i$ in $\Z_n$.  As a consequence, when regarding the matrix above, we must consider it only up to horizontal reflection about the central column and vertical reflection about the central row.  This indeterminacy is inconsequential in what follows.  In particular, the following corollary holds.

\begin{corollary}\label{coro:terms}
The manifold $\mathcal Z_{p,k_p}$ has precisely $2p-2k_p-1$ vanishing correction terms.  Therefore, $\mathcal K_{p,k_p}$ is not smoothly doubly slice.  Moreover, the $\mathcal K_{p,k_p}$ are nontrivial in $\mathcal C_\mathcal D$.
\end{corollary}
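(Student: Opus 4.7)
The plan is to extract the count of vanishing correction terms of $\mathcal{Z}_{p,k_p}$ from the matrix $\mathcal{M}$ of Proposition \ref{prop:X_corr_terms}, and then apply Theorem \ref{thm:hyp_corr_terms} and Proposition \ref{prop:stable_terms}.

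First I would set up the combinatorics. By Equation \ref{eqn:lens_corr_terms} and additivity of $d$, the correction terms of $L(p,1)\#L(p,-1)$ vanish exactly on the union $\Delta_+\cup\Delta_-$ of the two order-$p$ subgroups $\Delta_\pm = \{(i,\pm i):i\in\Z_p\}$ of $\Z_p\oplus\Z_p$, which meet only at $(0,0)$ and thus total $2p-1$ elements; off this union, the values of $d(L(p,1)\#L(p,-1))$ are nonzero rationals whose denominators divide $p$. Since the entries of $\mathcal{M}$ are nonnegative integers (as displayed) and $d(\mathcal{Z}_{p,k_p},(i,j)) = d(L(p,1)\#L(p,-1),(i,j)) - \mathcal{M}_{i,j}$, a vanishing correction term of $\mathcal{Z}_{p,k_p}$ can only occur at a position in $\Delta_+\cup\Delta_-$ at which $\mathcal{M}$ has a zero entry. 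Reading off the matrix along each of $\Delta_\pm$, the staircase of nonzero entries bordering the central zero rows occupies exactly $k_p$ positions, leaving $p-k_p$ positions with matching zeros. Since $\Delta_+\cap\Delta_-=\{(0,0)\}$ and $\mathcal{M}_{0,0}=0$, inclusion-exclusion yields $(p-k_p)+(p-k_p)-1 = 2p-2k_p-1$ vanishing correction terms. The reflection indeterminacy preserves both $\Delta_+\cup\Delta_-$ and the relevant zero pattern of $\mathcal{M}$, so the count is unaffected.

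Since $k_p\geq 1$ we have $2p-2k_p-1 < 2p-1$, so Theorem \ref{thm:hyp_corr_terms} implies $\mathcal{Z}_{p,k_p}$ does not embed smoothly in $S^4$. Combined with the embedding proposition immediately preceding Theorem \ref{thm:hyp_corr_terms}, this shows $\mathcal{K}_{p,k_p}$ is not smoothly doubly slice.

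For the nontriviality in $\mathcal{C}_\mathcal{D}$, suppose for contradiction that $\mathcal{K}_{p,k_p}\stackrel{\mathcal{D}}{\sim} U$. By definition this yields smoothly doubly slice knots $J_0,J_1$ with $\mathcal{K}_{p,k_p}\#J_0 = J_1$, exhibiting $\mathcal{K}_{p,k_p}$ as smoothly stably doubly slice. Since $H_1(\mathcal{Z}_{p,k_p})\cong\Z_p\oplus\Z_p$ has a single homogeneous $p$-group component of rank $2\leq 4$, Proposition \ref{prop:stable_terms}(2) forces the linking triple of $\mathcal{Z}_{p,k_p}$ to be hyperbolic; but hyperbolicity requires the correction terms to vanish on a pair of order-$p$ subgroups meeting only at the identity, hence on at least $2p-1$ elements, contradicting our count. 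The main technical obstacle is the bookkeeping that identifies which zero entries of $\mathcal{M}$ lie on $\Delta_+\cup\Delta_-$ and the verification that the reflection ambiguity cannot create additional vanishing positions.
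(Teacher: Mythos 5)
Your argument is correct and follows the paper's own proof essentially verbatim: count the $2p-1$ zeros of $\mathcal D(L(p,1)\#L(p,-1))$ on the two diagonal subgroups, observe that exactly $2k_p$ of them are destroyed by nonzero entries of $\mathcal M$ while no new zeros appear off the diagonals, then invoke Theorem \ref{thm:hyp_corr_terms} for double sliceness and Proposition \ref{prop:stable_terms}(2) (rank $2\le 4$) for nontriviality in $\mathcal C_\mathcal D$. One wording nit: the ``no new zeros'' step requires that the off-diagonal values of $\mathcal D(L(p,1)\#L(p,-1))$ are \emph{non-integers} (which they are, by the lens-space formula), not merely nonzero rationals with denominator dividing $p$.
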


\begin{proof}
The $(p\times p)$--matrix for $\mathcal D(L(p,1)\#L(p,-1))$ has zeros along the two (orthogonal) diagonals and non-integer rational numbers elsewhere. The corresponding entries in the matrix for $\mathcal D(Z_{p,k_p})$ are lowered by even integers, corresponding to the matrix $\mathcal M$ by Proposition \ref{prop:X_corr_terms}.  It is easy to see that precisely $2k_p$ of the $2p-1$ vanishing entries in $\mathcal D(L(p,1)\#L(p,-1))$ will be lowered by a nonzero amount.  These changes correspond to the non-zero entries of $\mathcal M$ on the cross-diagonal.  Since all entries are changed by an even integer, no new zeros will be created.  Therefore, $\mathcal Z_{p,k_p}$ has precisely $2p-2k_p-1$ vanishing correction terms.  By Theorem \ref{thm:hyp_corr_terms}, this implies that the $\mathcal K_{p,k_p}$ are not smoothly doubly slice.  In fact, by Proposition \ref{prop:stable_terms}, this implies that each $\mathcal K_{p,k_p}$ is not even smoothly stably doubly slice, since $\det(\mathcal K_{p,k_p}) = p^2$.  Therefore, each $\mathcal K_{p,k_p}$ represents a nontrivial element in $\mathcal C_\mathcal D$.
\end{proof}

\subsection{Notation and set-up}\ 

Let $X=S^3_n(K)$, and let $[\frak s_i]\in\spinc(X)$ be the enumeration of $\spinc(X)$ introduced in Subsection \ref{subsec:enumerate}.  Then we have the following decomposition:
$$HF^\infty(X) = \bigoplus_{i=0}^{n-1} HF^\infty(X,\frak s_i)=\bigoplus_{i=0}^{n-1}\mathcal T^\infty_i(X).$$
Note that here and throughout, subscripts will correspond to the labelings of $\spinc$ structures on the manifolds.  Theorem \ref{thm:surgery_formula} implies that, for any $x\in\mathcal T^\infty_i(X)$,
$$gr(x)\equiv d(L(n,1),i) \pmod 2$$
for all $i\in\Z_n$.  Let $\bar x^\infty_i$ denote the element in $\mathcal T^\infty_i(X)$ such that
$$gr(\bar x^\infty_i)=d(L(n,1),i).$$

Let $Y = X_{-n-1}(K)$ for a null homologous knot $K$ in $X$, and let $[\frak s_i,\frak s_j]\in\spinc(Y)$ be the enumeration of $\spinc(Y)$, as in Subsection \ref{subsec:enumerate}.  This gives the following decomposition:
$$HF^\infty(Y) = \bigoplus_{i=0}^{n-1}\bigoplus_{j=0}^nHF^\infty(Y,[\frak s_i, \frak s_j]) = \bigoplus_{i=0}^{n-1}\bigoplus_{j=0}^n\mathcal T^\infty_{i,j}(Y).$$

Let $F^\infty_{W_1,[\frak s_i,\frak t_m]}:HF^\infty(X,\frak s_i)\to HF^\infty(Y, [\frak s_i,\frak s_j])$ be the map induced by $(W_1,[\frak s_i,\frak t_m])$, as in Subsection \ref{subsec:cobordism}.  Since $W_1$ is negative definite, we can conclude (see \cite{oz-sz:absolute}) that $F_{W_1,\frak t}^\infty$ is an isomorphism for all $\frak t\in\spinc(W_1)$.  Furthermore, 
$$gr\left(F^\infty_{W_1,[\frak s_i,\frak t_m]}\right)= \frac{(n+1)-(2m+(n+1))^2}{4(n+1)}$$
for each $i\in\Z_n$.  In general, if $F$ is any graded map between graded abelian groups, we denote the grading shift of $F$ by $gr(F)$.

\begin{lemma}\label{lemma:mod2}
For all $i\in\Z_n$ and $j\in\Z_{n+1}$, let $y$ be any element in $\mathcal T_{i,j}^\infty(Y)$, then $$gr(y)\equiv gr(L(n,1),i)-gr(L(n+1,1),j) \pmod 2.$$
\end{lemma}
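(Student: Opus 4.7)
The plan is to exploit Theorem~\ref{thm:infty_isom} applied to the cobordism $W_1$ from $X$ to $Y$. Since $W_1$ is negative definite and arises from a single 2--handle attachment along the knot $K$ in $X$, Theorem~\ref{thm:infty_isom} guarantees that $F^\infty_{W_1,[\frak s_i,\frak t_m]}\colon HF^\infty(X,\frak s_i)\to HF^\infty(Y,[\frak s_i,\frak s_j])$ is an isomorphism of $\F[U,U^{-1}]$--modules whenever $m\equiv j\pmod{n+1}$. Consequently, every $y\in\mathcal T^\infty_{i,j}(Y)$ is of the form $y=U^{-k}\,F^\infty_{W_1,[\frak s_i,\frak t_m]}(\bar x^\infty_i)$ for some integer $k$ and some such $m$. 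Computing the grading of both sides and using the defining identity $gr(\bar x^\infty_i)=d(L(n,1),i)$, we obtain
$$gr(y) \equiv d(L(n,1),i) + gr\bigl(F^\infty_{W_1,[\frak s_i,\frak t_m]}\bigr) \pmod 2,$$
since the factor $U^{-k}$ only shifts the grading by an even integer.

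With this reduction in place, proving the lemma amounts to establishing the mod $2$ identity
$$gr\bigl(F^\infty_{W_1,[\frak s_i,\frak t_m]}\bigr) \equiv -d(L(n+1,1),j) \pmod 2$$
whenever $m\equiv j\pmod{n+1}$. First I would observe from the explicit grading shift formula quoted just above the lemma that replacing $m$ by $m+(n+1)$ changes this shift by an even integer, so the left-hand side is well defined mod~$2$ as a function of $j$ alone. It then suffices to verify the identity for a single representative $m$ in each residue class, for which the natural choice is $m=j$ with $0\le j\le n$. At that point the statement reduces to an algebraic identity between two explicit rational numbers with common denominator $4(n+1)$, verified by expanding the squares and observing that the numerator is divisible by $8(n+1)$.

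The main obstacle --- aside from the routine expansion --- is keeping the sign and indexing conventions straight: one must remember that $Y$ is naturally modeled on $L(n,1)\#L(n+1,-1)$ (as recorded in Example~\ref{ex:triad_spaces}) rather than on $L(n,1)\#L(n+1,1)$, which is why a minus sign appears in front of $d(L(n+1,1),j)$ in the statement of the lemma, and that the enumeration of $\spinc(Y)$ in Subsection~\ref{subsec:enumerate} matches the enumeration of $\spinc$ structures on lens spaces used in Equation~\ref{eqn:lens_corr_terms}. Once these bookkeeping points are verified, the proof is purely an application of the negative-definite isomorphism and an elementary mod~$2$ computation.
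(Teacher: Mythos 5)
Your proposal is correct and follows essentially the same route as the paper's proof: both arguments use that $F^\infty_{W_1,[\frak s_i,\frak t_m]}$ is an isomorphism on $HF^\infty$ (negative-definite two-handle cobordism), that every homogeneous element of $\mathcal T^\infty_{i,j}(Y)$ is a $U$-translate of the image of $\bar x^\infty_i$, and then match the explicit grading shift of the cobordism map with $-d(L(n+1,1),j)$ modulo $2$ via Equation \ref{eqn:lens_corr_terms}. The only difference is cosmetic: the paper fixes the representative $m=-j$ outright, whereas you first observe that the shift is well defined mod $2$ on residue classes of $m$.
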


\begin{proof}
The fact that $F^\infty_{W_1,[\frak s_i,\frak t_{m}]}$ is an isomorphism, and the labeling of $\spinc$ structures, implies that $F^\infty_{W_1,[\frak s_i,\frak t_m]}(x_i^\infty)\subset\mathcal T^\infty_{i,j}$ if and only if $m\equiv j\pmod{n+1}$.  Let $m=-j$, then, since all elements in $\mathcal T^\infty_{i,j}$ can be obtained from each other by translation by $U$,
\begin{eqnarray*}
gr(y) & \equiv & gr\left(F^\infty_{W_1,[\frak s_i,\frak t_{-j}]}(\bar x_i^\infty)\right) \pmod 2 \\
&  \equiv  &  gr(\bar x_i^\infty) + gr(F^\infty_{W_1,[\frak s_i,\frak t_{-j}]})  \\
& \equiv &  d(L(n,1),i)-d(L(n+1,1),j) 
\end{eqnarray*}
\end{proof}

Let $\bar y^\infty_{i,j}$ denote the element in $\mathcal T^\infty_{i,j}(Y)$ satisfying
$$gr(\bar y_{i,j}^\infty) = d(L(n,1),i)-d(L(n+1,1),j).$$  Using this notation, we gain a precise understanding of the map $$ F^\infty_1 = \sum_{\frak t\in\spinc(W_1)}F^\infty_{W_1,\frak t},$$
given by the following lemma.  Note that $F_1^\infty$ is not a well-defined map to $HF^\infty(Y)$, since its image will generally consist of infinite sums of elements in $HF^\infty(Y)$.  The important fact for us is that all but finitely many of the terms will have coefficients that are large powers of $U$.

\begin{lemma}\label{lemma:F1_infty}
Let the $\bar x_i^\infty$ and $\bar y_{i,j}^\infty$ be defined as above.  Then, for all $i\in\Z_n$,
$$F^\infty_1(\bar x_i^\infty) = (\bar y^\infty_{i,1} + \bar y_{i,2}^\infty + \cdots + \bar y_{i,n}^\infty) + U(\bar y_{i,1}^\infty+\bar y_{i,n}^\infty) + U^2(\bar y_{i,2}^\infty + \bar y_{i,n-1}^\infty)+\cdots,$$
where the expression continues indefinitely with increasing positive powers of $U$ as coefficients.
\end{lemma}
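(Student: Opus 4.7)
The plan is to compute $F^\infty_1(\bar x^\infty_i) = \sum_{\frak t\in\spinc(W_1)} F^\infty_{W_1,\frak t}(\bar x^\infty_i)$ by evaluating each summand individually and then collecting the results by $U$--power. Since $W_1$ is negative definite, Theorem \ref{thm:infty_isom} guarantees that every $F^\infty_{W_1,[\frak s_i,\frak t_m]}\colon HF^\infty(X,\frak s_i)\to HF^\infty(Y,[\frak s_i,\frak s_j])$ is an isomorphism of $\F[U,U^{-1}]$--modules, where $j$ is the reduction of $m$ modulo $n+1$. Because both source and target are rank-one modules generated by $\bar x^\infty_i$ and $\bar y^\infty_{i,j}$ respectively, the image must take the form $U^{a(m)}\bar y^\infty_{i,j}$ for a unique integer $a(m)$, determined by equating the known gradings of these generators with the cobordism grading shift recalled in Subsection \ref{subsec:cobordism}.

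Parameterizing $m=j+l(n+1)$ for $j\in\{0,1,\ldots,n\}$ and $l\in\Z$, a direct substitution into the grading-shift formula collapses, via a difference-of-squares identity, to the closed expression
\[
a(j,l) \;=\; j(l+1) \;+\; \tfrac{l(l+1)(n+1)}{2},
\]
which is always an integer since $l(l+1)$ is even. The coefficient of $U^k$ in $F^\infty_1(\bar x^\infty_i)$ is then the $\F$--sum of $\bar y^\infty_{i,j}$ over all solutions $(j,l)$ to $a(j,l)=k$ with $j\in\{0,\ldots,n\}$. For $k=0$, these are $l=-1$ for every $j$ together with $(j,l)=(0,0)$; for $k\geq 1$ small, the only solutions with $j\in\{1,\ldots,n\}$ are $(j,l)=(k,0)$ and $(j,l)=(n+1-k,-2)$, which reproduces the pair $\bar y^\infty_{i,k}+\bar y^\infty_{i,n+1-k}$ listed in the lemma.

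The key subtlety, and the reason $\bar y^\infty_{i,0}$ never appears in the expansion, is the involution $l\mapsto -l-1$ on $\Z$. This involution preserves $a(0,l)=l(l+1)(n+1)/2$ and acts freely, so the $\spinc$ structures on $W_1$ contributing to $\mathcal T^\infty_{i,0}(Y)$ come in matched pairs producing identical elements; over $\F$ each such pair cancels. In particular, the two $U^0$--contributions at $(0,-1)$ and $(0,0)$ cancel, leaving the sum $\bar y^\infty_{i,1}+\cdots+\bar y^\infty_{i,n}$ rather than a sum over all $n+1$ values of $j$. For $j\neq 0$, the same involution changes $a(j,l)$ by $j(2l+1)\neq 0$, so no cancellation occurs and each contribution survives.

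The main obstacle is the algebraic bookkeeping for the infinite sum: one must verify that, for each of the displayed $U^k$--coefficients, no additional solution $(j,l)$ with $|l|\geq 3$ lies in the admissible index range $j\in\{1,\ldots,n\}$. This follows from the quadratic growth of $a(j,l)$ in $l$ for fixed $j$, which confines the relevant $l$ for each small $k$ to a bounded window, together with a brief case check. Once this is done, the full expansion of $F^\infty_1(\bar x^\infty_i)$ assembles exactly as claimed.
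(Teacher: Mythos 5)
Your proposal is correct and follows essentially the same route as the paper: the paper's proof is the one-line remark that one examines $gr(F^\infty_{W_1,[\frak s_i,\frak t_m]})$ as $m$ ranges over the integers and collects the resulting $U$--powers, and your argument is exactly that computation carried out explicitly, with the grading shift rewritten as the quadratic $a(j,l)$ and the coefficients of each $U^k$ assembled over $\F$. Your identification of the free involution $l\mapsto -l-1$ as the reason the contributions to $\mathcal T^\infty_{i,0}(Y)$ cancel in pairs is the correct (and in the paper unstated) explanation for the absence of $\bar y^\infty_{i,0}$ from the expansion.
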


\begin{proof}
The proof of this lemma is a simple examination of $gr(F^\infty_{W_1,[\frak s_i,\frak t_m]})$ as $m$ varies over the integers.  The powers of $U$ in the tail follow a growth pattern that depends quadratically on $n$ in a simple way, but will not be relevant in what follows.
\end{proof}

Continuing, let $Z$ be obtained from $Y$ by blowing down a meridian, as in Subsection \ref{subsec:triad}, let $W_2$ be the induced cobordism, and let $[\frak s_i,\frak s_k]\in\spinc(Z)$ be the enumeration of $\spinc(Z)$, as in Subsection \ref{subsec:enumerate}.  This gives the following decomposition:
$$HF^\infty(Z) = \bigoplus_{i=0}^{n-1}\bigoplus_{k=0}^{n-1}HF^\infty(Z,[\frak s_i, \frak s_k]) = \bigoplus_{i=0}^{n-1}\bigoplus_{k=0}^{n-1}\mathcal T^\infty_{i,k}(Z).$$

Let $F^\infty_{W_2,[\frak s_i,\frak r_m]}:HF^\infty(Y,[\frak s_i,\frak s_j])\to HF^\infty(Z, [\frak s_i,\frak s_k])$ be the map induced by $(W_2,[\frak s_i, \frak r_m])$.  Since $W_2$ is negative definite, we can conclude that $F_{W_2,\frak r}^\infty$ is an isomorphism for all $\frak r\in\spinc(W_2)$.  Furthermore, 
$$gr\left(F^\infty_{W_2,[\frak s_i,\frak r_m]}\right) = \frac{n(n+1)-(2m+n(n+1))^2}{4n(n+1)}$$
for each $i\in\Z_n$ and $j\in\Z_{n+1}$.

\begin{lemma}
Let $z$ be any element in $\mathcal T_{i,k}^\infty(Z)$, then $$gr(z)\equiv gr(L(n,1),i)-gr(L(n,1),k) \pmod 2$$
for all $i\in\Z_n$ and $k\in\Z_{n}$.
\end{lemma}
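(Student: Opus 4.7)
The plan is to mimic the proof of Lemma \ref{lemma:mod2} step by step, replacing the cobordism $W_1$ by $W_2$ and $Y$ by $Z$, but using the enumeration of $\spinc(Z)$ described in Subsection \ref{subsec:enumerate}.

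First I would note that since $W_2$ is a negative-definite $2$-handle cobordism between rational homology $3$-spheres, Theorem \ref{thm:infty_isom} applies to tell us that $F^\infty_{W_2,\frak r}$ is an isomorphism for every $\frak r \in \spinc(W_2)$. The enumeration from Subsection \ref{subsec:enumerate} further implies that $F^\infty_{W_2,[\frak s_i,\frak r_m]}$ sends $\mathcal T^\infty_{i,j}(Y)$ isomorphically onto $\mathcal T^\infty_{i,k}(Z)$ exactly when $m \equiv j \pmod{n+1}$ and $m \equiv k \pmod n$; such an $m$ exists by the Chinese Remainder Theorem, since $\gcd(n,n+1) = 1$.

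Next, starting from the distinguished generator $\bar y^\infty_{i,j} \in \mathcal T^\infty_{i,j}(Y)$ produced in Lemma \ref{lemma:mod2}, I would observe that its image under $F^\infty_{W_2,[\frak s_i,\frak r_m]}$ lies in $\mathcal T^\infty_{i,k}(Z)$ and, using the grading-shift formula from Subsection \ref{subsec:cobordism}, has grading
\[
d(L(n,1),i) - d(L(n+1,1),j) + \frac{n(n+1) - (2m + n(n+1))^2}{4n(n+1)}.
\]
Any other element of $\mathcal T^\infty_{i,k}(Z)$ differs from this one by an integer power of $U$, which shifts the grading by an even integer. Hence the grading of every $z \in \mathcal T^\infty_{i,k}(Z)$ is congruent to the quantity above modulo $2$.

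What remains is the arithmetic step of verifying that
\[
-d(L(n+1,1),j) + \frac{n(n+1) - (2m + n(n+1))^2}{4n(n+1)} \equiv -d(L(n,1),k) \pmod{2}
\]
whenever $m \equiv j \pmod{n+1}$ and $m \equiv k \pmod{n}$. Using the explicit formula $d(L(p,1),i) = \frac{p-(2i-p)^2}{4p}$ from Equation \ref{eqn:lens_corr_terms}, I would split the quadratic term $(2m+n(n+1))^2$ via partial fractions over the coprime factors $n$ and $n+1$ of $n(n+1)$, matching each piece with the corresponding lens-space correction term up to an even integer. This is the only nonformal part of the argument, and I expect it to be the main (though still elementary) obstacle. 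The $j \leftrightarrow -j$ and $k \leftrightarrow -k$ indeterminacies flagged before Corollary \ref{coro:terms} are harmless here since $d(L(p,1),\cdot)$ depends only on $(2i-p)^2$ and so is invariant under reflection of the centered indexing set.
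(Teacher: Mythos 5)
Your proposal follows exactly the paper's route: the paper's proof of this lemma is literally ``identical to that of Lemma \ref{lemma:mod2},'' i.e., push the bottom generator of $\mathcal T^\infty_{i,j}(Y)$ through the isomorphism $F^\infty_{W_2,[\frak s_i,\frak r_m]}$ (negative definite two-handle cobordism, $m\equiv j \pmod{n+1}$, $m\equiv k\pmod n$) and read off the grading shift. The arithmetic identity you defer to the end is precisely the computation carried out in the appendix (the identity whose discrepancy is $k^2-k$, an even integer), so your argument is complete in substance and matches the paper's.
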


\begin{proof}
This proof is identical to that of Lemma \ref{lemma:mod2}.
\end{proof}

Let $\bar z^\infty_{i,k}$ denote the element in $\mathcal T^\infty_{i,k}(Z)$ satisfying
$$gr(\bar z_{i,k}^\infty) = d(L(n,1),i)-d(L(n,1),j).$$  Using this notation, we gain a precise understanding of the map $$ F^\infty_2 = \sum_{\frak r\in\spinc(W_2)}F^\infty_{W_2,\frak r},$$
in an analogous way to Lemma \ref{lemma:F1_infty}.  From this point on, we will index $H_1(X)\cong\Z_n,H_1(Y)\cong\Z_n\oplus\Z_{n+1}$, and $H_1(Z)\cong\Z_n\oplus\Z_n$ by $i,(i,j)$, and $(i,k)$ (respectively), such that $-\frac{n-1}{2}\leq i,k\leq \frac{n-1}{2}$ and $-\frac{n+1}{2}\leq j\leq \frac{n-1}{2}$.

\begin{lemma}\label{lemma:F2_infty}
Let the $\bar y_{i,j}^\infty$ and $\bar z_{i,k}^\infty$ be defined as above.  Then, for all $i\in\Z_n$,
\begin{eqnarray*}
F^\infty_2(\bar y_{i,0}^\infty) & = & \bar z^\infty_{i,0} + U(\bar z_{i,1}^\infty+\bar z_{i,n-1}^\infty) + U^5(\bar z_{i,2}^\infty+ \bar z_{i,n-2}^\infty)+\cdots, \\
F^\infty_2(\bar y_{i,j}^\infty) & = & \bar z^\infty_{i,j-1}+ \bar z^\infty_{i,j} + U(\bar z_{i,j+1}^\infty) + U^3(\bar z_{i,j-2}^\infty)+  \cdots,
\end{eqnarray*}
if $|j|=1$, and
\begin{eqnarray*}
\hspace{.7in} F^\infty_2(\bar y_{i,j}^\infty) & = & \bar z^\infty_{i,j-1}+ \bar z^\infty_{i,j} + U(\bar z_{i,j-2}^\infty+ \bar z_{i,j+1}^\infty) + U^3(\bar z_{i,j-3}^\infty+ \bar z_{i,j+2}^\infty)+\cdots,
\end{eqnarray*}
for $|j|>1$. The expressions continue indefinitely with increasing positive powers of $U$ as coefficients.
\end{lemma}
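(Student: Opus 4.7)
The plan is to mimic, step-for-step, the proof of Lemma~\ref{lemma:F1_infty}. Since $W_2$ is negative definite, each of the maps $F^\infty_{W_2,[\frak s_i,\frak r_m]}$ is an isomorphism between towers, and by the labeling conventions of Subsection~\ref{subsec:enumerate} it carries $\mathcal T^\infty_{i,j}(Y)$ isomorphically onto $\mathcal T^\infty_{i,k}(Z)$ precisely when $m\equiv j\pmod{n+1}$ and $k\equiv m\pmod n$. Thus, for fixed $i$ and $j$, I parameterize the relevant Spin$^c$ structures on $W_2$ by the integers $m$ in the arithmetic progression $j+(n+1)\Z$, and for each such $m$ read off the target $k\in\Z_n$ as $m\pmod n$ together with the grading shift
$$\Delta(m)=\frac{n(n+1)-(2m+n(n+1))^2}{4n(n+1)}.$$
The contribution of $[\frak s_i,\frak r_m]$ to $F^\infty_2(\bar y^\infty_{i,j})$ is then $U^{a(m)}\bar z^\infty_{i,k}$, where $a(m)$ is the integer determined by $2a(m)=gr(\bar z^\infty_{i,k})-gr(\bar y^\infty_{i,j})-\Delta(m)$.

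Summing over $m$ in the progression, grouping by the target $k$ and power of $U$, and accounting for cancellations mod~$2$, yields $F^\infty_2(\bar y^\infty_{i,j})$. The case split $j=0$, $|j|=1$, and $|j|>1$ in the statement reflects the different symmetry behaviors of the progression $j+(n+1)\Z$ relative to the minimizer $-n(n+1)/2$ of $(2m+n(n+1))^2$. When $j=0$, the progression $(n+1)\Z$ is symmetric about $0$; paired values $m,-m-n(n+1)$ have the same $|2m+n(n+1)|$ and land in antipodal residues $k,-k\in\Z_n$, producing the doubled terms $\bar z^\infty_{i,k}+\bar z^\infty_{i,-k}$ with the asserted $U$-exponents while leaving $k=0$ as the unique unpaired constant contribution. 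When $|j|\ge 1$, the asymmetry produces two low-power terms at $k=j-1$ and $k=j$, with the higher $U$-coefficients following the quadratic step structure $1,3,5,9,\ldots$ of $(2m+n(n+1))^2$ along $j+(n+1)\Z$.

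The principal obstacle is the index bookkeeping: correctly identifying, for each $m$, its residue $k$ in the centered indexing of $\Z_n$, and carefully tracking the $\F$-cancellations coming from conjugate pairs of Spin$^c$ structures on $W_2$. Once organized, however, the calculation is a routine consequence of the isomorphism property of $F^\infty_{W_2,[\frak s_i,\frak r_m]}$ and the explicit grading-shift formula, exactly paralleling the treatment of $F^\infty_1$ in Lemma~\ref{lemma:F1_infty}; the growth pattern of the $U$-powers is fixed once the first few entries are verified.
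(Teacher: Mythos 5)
Your proposal takes essentially the same route as the paper: the paper's proof of this lemma is literally a one-line reference to the proof of Lemma \ref{lemma:F1_infty}, which in turn consists of examining the grading shift $gr(F^\infty_{W_2,[\frak s_i,\frak r_m]})$ as $m$ ranges over the progression $j+(n+1)\Z$ and reading off the target tower and $U$--power from the quadratic $(2m+n(n+1))^2$, exactly as you describe. Your additional attention to the pairing $m\leftrightarrow -m-n(n+1)$ (which produces the doubled terms $\bar z^\infty_{i,k}+\bar z^\infty_{i,-k}$ in the $j=0$ case and governs the case split on $|j|$) is consistent with, and slightly more explicit than, what the paper records.
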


\begin{proof}
This proof is the same as that of Lemma \ref{lemma:F1_infty}.
\end{proof}

Let $\pi:HF^\infty(M,\frak s)\to HF^+(M,\frak s)$, be the natural projection map.  Let $\bar x_i^+ = \pi(\bar x_i^\infty)$, and define  $\bar y_{i,j}^+$ and $\bar z_{i,k}^+$ similarly.  Analogous to the discussion above, we have the following decomposition:
$$HF^+(X)/HF_{red}(X) = \bigoplus_{i=-\frac{n-1}{2}}^{\frac{n-1}{2}}\mathcal T_i^+(X),$$
as well as similar decompositions corresponding to $Y$ and $Z$.  Note that we are not claiming that $\bar x_i^+$ is nonzero in $\mathcal T_i^+(X)$.  Similarly, it may be that $\bar y_{i,j}^+$ and the $\bar z_{i,k}^+$ vanish.  Define
$$F_1^+  = \sum_{\frak t\in\spinc(W_1)}F^+_{W_1,\frak t},$$
and
$$F_2^+  = \sum_{\frak r\in\spinc(W_2)}F^+_{W_2,\frak r}.$$

\subsection{Proof of Proposition \ref{prop:X_corr_terms}}\ 

With this notational set-up, we recall that the triad $(X,Y,Z)$ introduced in Section \ref{section:geometry} induces certain long exact sequence (discussed in Section \ref{section:HF}), which will be used below in the proof of Proposition \ref{prop:X_corr_terms}.

Let $J=\#_{k_p}D$, so $X=S^3_p(J\#J)$, $Y = S^3_{p^2+p}(J\#J\#T_{p,p+1})$, and $Z=\mathcal Z_{p,k_p}=\Sigma_2(I_{\#_{k_p}D,p})$.  The calculations made in Appendix \ref{appendix} give us the correction terms for $X$ and $Y$.  In particular, Lemma \ref{lemma:Y_corr_terms} tells us that $\mathcal D(L(p,1)) - \mathcal D(X)$ is given by
$$\vec w = \{0,\ldots, 0,2,2,4,4,\ldots,2k_p-2,2k_p-2,2k_p,2k_p,2k_p,2k_p-2,2k_p-2,\ldots,4,4,2,2,0,\ldots, 0\},$$
where  $2w_i$ is the value of the $i^\text{th}$ coordinate of $\vec w$ for $i\in\Z$ with our symmetric labeling.

Let $x_i^\infty = U^{w_i}\bar x_i^\infty$, and let $\pi(x_i^\infty) = x_i^+$.  It follows that $x_i^+$ is the element of lowest grading in $\mathcal T_i^+(X)$, i.e., $gr(x_i^+) = d(X,\frak s_i)$.  Similarly, by Corollary \ref{coro:Y'_corr_terms}, $\mathcal D(L(n,1)\#L(n+1,-1)) - \mathcal D(Y)$ is given by the matrix $\mathcal M = (2m_{i,j})$, which has the following form.
$$
{
\left[
\setlength{\arraycolsep}{2.3pt}
\begin{array}{cccccccccccccccccccccc}
0 & \cdots & 0& 0 & 0 & 0 & 0 & 0 & 0 & 0 & 0 & 0 & 0 & 0 & 0 & \cdots & 0 & 0& 0& \cdots & 0 \\
\vdots & & \vdots & \vdots &\vdots & \vdots & \vdots & \vdots & \vdots & \vdots & \vdots & \vdots & \vdots & \vdots & \vdots && \vdots& \vdots & \vdots& & \vdots \\
0 & \cdots & 0 & 0 & 0 & 0 & 0 & 0 & 0 & 0 & 0 & 0 & 0 & 0 & 0 & \cdots & 0 & 0& 0& \cdots & 0 \\
2 & \cdots & 2 & 2 & 0 & 0 & 0 & 0 & 0 &0 & 0 & 2 & 2 & 2 & 2 &  \cdots & 2 & 2 & 2& \cdots & 2 \\
2 & \cdots & 2 & 2 & 2 & 0 & 0 & 0 & 0 & 0 & 0 & 2  & 2 & 2 & 2 & \cdots & 2 & 2 & 2& \cdots & 2 \\
4 & \cdots & 4 & 4 & 4 & 2 & 0 & 0 & 0 & 0 & 0 & 4 & 4 & 4 & 4 & \cdots & 4 & 4 & 4& \cdots & 4 \\
4 & \cdots & 4 & 4 & 4 & 4 & 2 & 0 & 0 & 0 & 0 & 4 & 4 & 4 & 4 & \cdots & 4 & 4 & 4& \cdots & 4\\
\vdots &  & \vdots & \vdots & \vdots &  &  & \ddots & \ddots & \ddots & & \vdots & \vdots & \vdots &\vdots & & \vdots & \vdots& \vdots& & \vdots \\
2k_p & \cdots & 2k_p & 2k_p & 2k_p & \cdots & 2k_p & 2k_p & 2 & 0 & 0 & 2k_p & 2k_p & 2k_p & 2k_p & \cdots & 2k_p & 2k_p & 2k_p & \cdots & 2k_p \\
2k_p & \cdots & 2k_p & 2k_p & 2k_p & \cdots & 2k_p & 2k_p & 2k_p & 2 & 0 & 2 & 2k_p & 2k_p &2k_p &  \cdots & 2k_p & 2k_p & 2k_p& \cdots & 2k_p \\
2k_p & \cdots & 2k_p & 2k_p & 2k_p & \cdots & 2k_p & 2k_p & 2k_p & 2k_p & 0 & 0 & 2 & 2k_p & 2k_p & \cdots & 2k_p & 2k_p & 2k_p& \cdots & 2k_p \\
\vdots & & \vdots & \vdots & \vdots & & \vdots & \vdots  & \vdots  & \vdots & & \ddots & \ddots & \ddots & & & \vdots   & \vdots& \vdots&  & \vdots \\
4 & \cdots & 4 & 4 & 4 & \cdots & 4 & 4 & 4 & 4 & 0 & 0 & 0 & 0 & 2 & 4 & 4 & 4 & 4& \cdots & 4 \\
4 & \cdots & 4 & 4 & 4 & \cdots& 4 & 4 & 4 &4 & 0 & 0 & 0 & 0 & 0 & 2 & 4 & 4 & 4& \cdots & 4 \\
2 & \cdots & 2 & 2 & 2 &  \cdots& 2 & 2 & 2 & 2 & 0 & 0 & 0 & 0 & 0 & 0 & 2 & 2 & 2& \cdots & 2 \\
2 & \cdots & 2 & 2 & 2 & \cdots & 2 & 2 & 2 & 2& 0 & 0& 0 & 0 & 0 & 0 & 0 & 2 & 2& \cdots & 2 \\
0 & \cdots & 0 & 0 & 0 & \cdots & 0 & 0 & 0 &0 & 0 & 0 & 0 & 0 &0 & 0 & 0 & 0 & 0& \cdots & 0 \\
\vdots & & \vdots & \vdots & \vdots & & \vdots & \vdots & \vdots  & \vdots & \vdots & \vdots & \vdots & \vdots & \vdots& \vdots & \vdots & \vdots & \vdots& & \vdots \\
0 & \cdots & 0 & 0 & 0 & \cdots & 0 & 0 & 0 & 0 & 0 & 0 & 0 & 0 & 0 & 0 & 0 & 0 & 0& \cdots & 0 \\
\end{array}
\right]
}
$$
Note that the values in the $i^\text{th}$ row of $\mathcal M$ are bounded above by $2w_i$.  (Remember that the rows are labeled by $\Z_n$ symmetrically about zero, and the columns are labeled by $\Z_{n+1}$ by $j\in[-\frac{n+1}{2},\frac{n-1}{2}]$).  We remark again that the calculation given in the proof of Corollary \ref{coro:Y'_corr_terms} introduces an indeterminacy regarding our presentation of the correction terms.  Namely, we cannot distinguish between $i$ and $-i$ and $j$ and $-j$ in the present labeling.  This indeterminacy is merely notational and will not affect the results.

Let $y_{i,j}^\infty = U^{m_{i,j}}\bar y_{i,j}^\infty$, and let $y_{i,j}^+ = \pi(y_{i,j}^\infty)$.  It follows that $y_{i,j}^+$ is the element of lowest grading in $\mathcal T_{i,j}^+(Y)$, i.e., $gr(y_{i,j}^+) = d(Y,[\frak s_i, \frak s_j])$.  With this notational set-up, we can prove the following lemma about the map $F_1^+:HF^+(X)\to HF^+(Y)$.

\begin{lemma}
Let $x_i^+\in\mathcal T_i(X)$ and $y_{i,j}^+\in\mathcal T_{i,j}^+(Y)$ be elements of lowest grading in their respective towers.  Then,
$$F_1^+(x_i^+) = \sum_{j\in\mathcal I_i}y_{i,j}^+,$$
where $\mathcal I_i = \{j\not=0\ :\ m_{i,j}=w_i\}$.
\end{lemma}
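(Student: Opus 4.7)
The plan is to compute $F_1^+(x_i^+)$ by lifting to $HF^\infty$ and projecting back down. Because $F_1$ is induced by a $\spinc$ cobordism, it commutes with the natural projection $\pi : HF^\infty \to HF^+$, so
$$F_1^+(x_i^+) \;=\; F_1^+\bigl(\pi(U^{w_i}\bar x_i^\infty)\bigr) \;=\; \pi\bigl(U^{w_i} F_1^\infty(\bar x_i^\infty)\bigr).$$
Substituting the explicit expression for $F_1^\infty(\bar x_i^\infty)$ given by Lemma \ref{lemma:F1_infty}, this becomes the $\pi$-image of a sum whose leading (i.e.\ $U^{w_i}$) part is $\sum_{j\neq 0} U^{w_i}\bar y_{i,j}^\infty$, together with higher-order tail terms of the form $U^{w_i+k}\bar y_{i,j}^\infty$ with $k\geq 1$.

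The next step is to determine which of these terms survive $\pi$ in the appropriate tower $\mathcal T^+_{i,j}(Y)$. Since $\bar y_{i,j}^\infty$ lies at grading $d(L(n,1),i) - d(L(n+1,1),j)$, which is exactly $2m_{i,j}$ above the bottom grading $d(Y,[\s_i,\s_j])$ of the tower, the $\F[U]$-equivariance of $\pi$ gives
$$\pi(U^a \bar y_{i,j}^\infty) \;=\; U^{a-m_{i,j}}\, y_{i,j}^+,$$
which is nonzero in $\mathcal T^+_{i,j}$ if and only if $a\leq m_{i,j}$, and equals $y_{i,j}^+$ itself precisely when $a = m_{i,j}$.

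The final ingredient is the inequality $m_{i,j}\leq w_i$, which is simply the observation, recorded immediately after the matrix $\mathcal M$, that the $i$-th row of $\mathcal M$ is bounded above by $2w_i$. This inequality kills every tail term $U^{w_i+k}(\cdots)$ for $k\geq 1$, since $w_i + k > w_i \geq m_{i,j}$ for all $j$; and among the leading $U^{w_i}$-terms, only those indices $j\neq 0$ with $m_{i,j} = w_i$ contribute, each producing one copy of $y_{i,j}^+$. These indices are exactly those comprising $\mathcal I_i$, yielding the asserted formula $F_1^+(x_i^+) = \sum_{j\in\mathcal I_i} y_{i,j}^+$. The only real obstacle is the bookkeeping: one must be sure that the tail of Lemma \ref{lemma:F1_infty} — whose $U$-powers depend in an elaborate way on $n$ — is uniformly killed by $\pi$, but this reduces cleanly to the single inequality $m_{i,j}\leq w_i$ and so presents no genuine difficulty.
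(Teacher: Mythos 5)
Your argument is correct and is essentially the paper's own proof: both lift to $HF^\infty$ via Lemma \ref{lemma:F1_infty}, use $U$--equivariance and the fact that $F_1$ commutes with $\pi$, and then invoke the bound $m_{i,j}\leq w_i$ together with $Uy_{i,j}^+=0$ to kill the tail and isolate the indices with $m_{i,j}=w_i$. Your write-up is slightly more explicit than the paper's about why the tail terms $U^{w_i+k}\bar y_{i,j}^\infty$ die under $\pi$, but the content is identical.
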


\begin{proof}
By Lemma \ref{lemma:F1_infty} we have that
$$F_1^\infty(\bar x_i^\infty) = \sum_{j\not=0}\bar y_{i,j}^\infty + \mathcal U(\bar y^\infty_{i,j}),$$
where $\mathcal U(\bar y_{i,j}^\infty)$ represents the terms that are positive $U$--translates of the $\bar y_{i,j}^\infty$.  By $U$--equivariance, we have
$$F_1^\infty(x_i^\infty) = U^{w_i}F_1^\infty(\bar x_i^\infty) = \sum_{j\not=0}U^{w_i}\bar y_{i,j}^\infty + U^{w_i}\mathcal U(\bar y_{i,j}^\infty).$$
Since $F_1$ commutes with the natural projection $\pi$ (which is $U$--equivariant), we see that
$$F_1^+(x_i^+) = \pi(F_1^\infty(x_i^\infty)) = \sum_{j\not=0}U^{w_i}\pi(\bar y^\infty_{i,j}) =  \sum_{j\not=0}U^{w_i}\bar y^+_{i,j},$$
where the tail has vanished, by $U$--equivariance.  By definition, we have $U^{w_i}\bar y_{i,j}^+ = U^{w_i-m_{i,j}}y_{ij}^+$, and this term will be nonzero if and only if $w_i\leq m_{i,j}$.  This can only happen if $w_i = m_{i,j}$, since, as we noticed above, $m_{i,j}\leq w_i$.
\end{proof}

Note that $|\mathcal I_i|\geq\frac{p+1}{2}$ for each $i$; so, in particular, $F_1^+(x_i^+)$ is a linear combination of at least $\frac{p+1}{2}$ terms for each $i$.  One consequence of this is that $y_{i,j}^+$ is not in the image of $F_1^+$ for any $i,j$.

Let $z_{i,j}^+$ denote the element of lowest grading in $\mathcal T_{i,j}^+(Z)$.  We know by $U$--equivariance that $$F_{W_2,[\frak s_i,\frak s_k]}(y_{i,j}^+) = U^{c_{i,k}}z_{i,k}^+$$ for some nonnegative integer $c_{i,k}$.  If we can show that $c_{i,j}=0$ for all $i,k$, we will have proved Proposition \ref{prop:X_corr_terms}, because we will have shown that $z_{i,k}^\infty = U^{m_{i,k}}\bar z_{i,k}^\infty$.  This is accomplished by the following lemma.  Recall the natural inclusion map $\hat\iota:\HFhat(Z)\to HF^+(Z)$.

\begin{lemma}
Let $z_{i,k}^+$ be the element of lowest grading in $\mathcal T_{i,k}^+(Z)$, and let $y_{i,k}^+$ be the element of lowest grading in $\mathcal T_{i,k}^+(Y)$.  Then,
$$gr(z_{i,k}^+) = gr\left(F_{W_2,[\frak s_i,\frak t_k]}^+(y_{i,k}^+)\right).$$
\end{lemma}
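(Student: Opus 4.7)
My plan is to mimic the argument given just above for $F_1^+$ and combine it with the surgery exact triangle for the triad $(X,Y,Z)$. The first step is to extract from Lemma~\ref{lemma:F2_infty} the component of $F_2^\infty(\bar y_{i,k}^\infty)$ lying in $\mathcal T^\infty_{i,k}(Z)$. In each of the three expansions given by Lemma~\ref{lemma:F2_infty}, the term $\bar z_{i,k}^\infty$ appears with coefficient $1$ and no $U$-power, so the spin$^c$ cobordism $[\frak s_i,\frak t_k]$ can be identified as the one producing it; that is,
\[F_{W_2,[\frak s_i,\frak t_k]}^\infty(\bar y_{i,k}^\infty)=\bar z_{i,k}^\infty.\]
Applying $U$-equivariance to $y_{i,k}^+=U^{m_{i,k}}\bar y_{i,k}^+$ together with the commutativity of $\pi$ with the induced map on $HF^+$ then gives
\[F_{W_2,[\frak s_i,\frak t_k]}^+(y_{i,k}^+)=U^{m_{i,k}}\bar z_{i,k}^+.\]

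The second step is to analyze this image via the $U$-action. Since $U\cdot y_{i,k}^+=0$, the element $U^{m_{i,k}}\bar z_{i,k}^+$ is annihilated by $U$. Because $\bar z_{i,k}^+=\pi(\bar z_{i,k}^\infty)$ lies in the tower $\mathcal T^+_{i,k}(Z)$, so does $U^{m_{i,k}}\bar z_{i,k}^+$. In the tower of a rational homology 3--sphere, the only element annihilated by $U$ is the bottom. Hence $U^{m_{i,k}}\bar z_{i,k}^+$ is either $0$ or equal to $z_{i,k}^+$, and in the latter case its grading equals $d(Z,[\frak s_i,\frak s_k])$, as required.

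The remaining and main obstacle is to rule out the vanishing case. For this I would invoke the hat version of the surgery triangle. Applying Theorem~\ref{thm:hat_vanish} to the positive-definite cobordism $W_3$, whose second homology is generated by a surface of genus $g(J\#J)=2k_p$ with self-intersection $p$, yields $\widehat F_3\equiv 0$: the inequality $|\langle c_1(\frak t),[\Sigma]\rangle|+p>4k_p-2$ holds trivially under the constraint $p\geq 4k_p-1$ from Subsection~\ref{subsec:coefficients}, since then the right-hand side is already non-positive. Exactness of the hat-level triangle then forces $\widehat F_2$ to be surjective, and chasing the commutative square relating $\hat\iota_Y$ and $\hat\iota_Z$ with $F_2^+$ produces an element of $\ker U\subset HF^+(Y,[\frak s_i,\frak s_k])$ that maps to $z_{i,k}^+$. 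A grading comparison, using the explicit formula for $gr(F_{W_2,[\frak s_i,\frak t_k]}^\infty)$, then matches this preimage with $y_{i,k}^+$ modulo reduced terms lying in $\ker F_2^+$, yielding the needed nonvanishing. The subtlety here is that several spin$^c$ cobordisms on $W_2$ can restrict to $[\frak s_i,\frak s_k]$ on both $Y$ and $Z$; the bookkeeping required to localize the $\widehat F_2$-surjectivity onto the specific summand corresponding to $\frak t_k$ is where I expect the main technical work to sit, and it closely parallels the bookkeeping carried out in the $F_1^+$-lemma above.
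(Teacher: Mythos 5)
Your first two steps are fine and match the paper's own setup: by $U$--equivariance $F^+_{W_2,[\frak s_i,\frak t_k]}(y^+_{i,k}) = U^{c}z^+_{i,k}$ for some $c\geq 0$, and the entire content of the lemma is to rule out the vanishing of this image, i.e.\ to show $c=0$. Your plan for that last step begins the same way as the paper's proof --- take $\hat z\in\HFhat(Z)$ with $\hat\iota(\hat z)=z^+_{i,k}$, kill $\widehat F_3$ via Theorem \ref{thm:hat_vanish} applied to $W_3$, and use exactness to produce $\hat y$ with $\widehat F_2(\hat y)=\hat z$ --- but it stalls exactly where the real work sits.

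The gap is in your final sentence: you propose to match the preimage $\hat y$ with $y^+_{i,k}$ ``modulo reduced terms lying in $\ker F_2^+$.'' There is no reason for the reduced components of $\hat y$ to lie in the kernel, and the paper never claims this. What is actually required is an upper bound on the gradings of \emph{all} homogeneous pieces $\hat y_a\in\HFhat(Y,[\frak s_i,\frak s_{j_a}])$, including those coming from $HF_{red}(Y)$. That is Lemma \ref{lemma:reduced_bound}: every $\xi\in HF_{red}(Y,[\frak s_i,\frak s_j])$ satisfies $gr(\xi)\leq gr(y^+_{i,j})$, a substantive structural fact about $Y$ proved in Appendix \ref{appendix} from the germ structure of $CFK^\infty(S^3,J\#J\#T_{n,n+1})$. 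Combined with the grading-shift formula and the observation that $gr\bigl(F^+_{W_2,[\frak s_i,\frak t_m]}(y^+_{i,j})\bigr)$ is maximized at $|j|=k$, this gives $gr(z^+_{i,k})=gr(\hat z)\leq gr\bigl(F^+_{W_2,[\frak s_i,\frak t_k]}(y^+_{i,k})\bigr)$, which together with your Step 2 forces $c=0$. Without such a bound the argument genuinely fails: if $HF_{red}(Y,[\frak s_i,\frak s_j])$ contained a class of grading larger than $gr(y^+_{i,j})$ mapping onto $\hat z$, then $gr(z^+_{i,k})$ could strictly exceed the formal grading of $F^+(y^+_{i,k})$, forcing that image to be zero. (A minor additional slip: in your application of Theorem \ref{thm:hat_vanish}, the quantity $2g(\Sigma)-2=4k_p-2$ is positive, not non-positive; the inequality holds simply because $p\geq 4k_p-1>4k_p-2$.)
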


\begin{proof}
Let $\hat z\in\HFhat(Z)$ such that $\hat\iota(\hat z) = z_{i,k}^+$.  By Theorem \ref{thm:hat_vanish}, we know that $\widehat F_3(\hat z)=0$.  (Recall that $-\overline{W_3}$ is induced by $(-p)$--surgery on a knot of genus $2k_p$ with $p>4p_k-1$, see Subsection \ref{subsec:coefficients}.)  By the exactness at $\HFhat(X)$, there exists some $\hat y\in\HFhat(Y')$ such that $\widehat F_2(\hat y) = \hat z$.  Now, $\hat y$ may not be homogeneous, so write $\hat y = \sum_a\hat y_a$, where each $\hat y_a$ is homogeneous and in $\HFhat(Y',[\frak s_i,\frak s_{j_a}])$.  By Lemma \ref{lemma:reduced_bound}, we know that for each $a$, $gr(\hat y_a) \leq gr(y_{i,j_a}^+)$.  So, we have

\begin{eqnarray*}
gr\left(\widehat F_{W_2,[\frak s_i,\frak t_{m_a}]}(\hat y_a)\right) & = & gr(\hat y_a) + gr\left(\widehat F_{W_2,[\frak s_i,\frak t_{m_a}]}\right) \\
& \leq & gr(y_{i,j_a}^+) + gr\left(F^+_{W_2,[\frak s_i,\frak t_{m_a}]}\right) \\
& = & gr\left( F_{W^+_2,[\frak s_i,\frak t_{m_a}]}(y_{i,j_a}^+)\right) \\
& \leq & gr\left( F_{W^+_2,[\frak s_i,\frak t_k]}(y_{i,k}^+)\right), \\
\end{eqnarray*}
where the last inequality follows from the fact that $gr\left(F_{W^+_2,[\frak s_i,\frak t_m]}(y_{i,j}^+)\right)$ is maximized when  by $j$ with $|j|= k$.  (Note that $j_a\equiv k\pmod{p}$.)  Since
$$gr(\hat z) \leq \max_a gr\left(\widehat F_{W_2,[\frak s_i,\frak t_{m_a}]}(\hat y_a)\right),$$
we have
$$gr(\hat z) \leq gr\left( F_{W^+_2,[\frak s_i,\frak t_k]}(y_{i,k}^+)\right).$$
This implies the desired equality once we recall that
$$gr(\hat z) = gr(z_{i,k}^+) \geq gr\left( F_{W^+_2,[\frak s_i,\frak t_k]}(y_{i,k}^+)\right),$$
by $U$--equivariance.
\end{proof}


\section{Proof of Theorem \ref{thm:main2}}\label{section:infinite_order}

In this section, we give a reformulation of one of the invariants introduced in \cite{GRS} for the study of double concordance of knots and use it to find an infinitely generated subgroup in $\ker\psi_\mathcal D$.

Let $A$ be a finite abelian group, so $A$ can be written as the product of cyclic groups.  Let $r_{p,k}(A)$ denote the number of copies of $\Z_{p^k}$ in the decomposition of $A$.  Let $r_p(A) = \sum_{k=1}^\infty r_{p,k}(A)$.  In other words, any generating set for $A$ must contain at least $r_p(A)$ elements of order $p^k$ for some $k\in\N$.

The following definition differs from \cite{GRS} only in the use of $r_p(A)$.


\begin{definition}
Let $K$ be a  knot in $S^3$ and let $p\in\N$ be a positive prime.  Let $M=\Sigma_2(K)$.  Fix an affine identification between $\spinc(M)$ and $A=H^2(M;\Z)$ such that the unique spin structure $\frak s_0$ gets identified with zero in $A$.  Let $\mathcal G_p$ denote the collection of all subgroups of $A$ of order $p$.  Define
$$\frak D_p(K) = \min\left\{ \left|\sum_{H\in\mathcal G_p}n_HS_H(d(M))\right|\ :\ \begin{array}{c} n_H \geq 0 \text{ for all $H$}, \\ \text{ at least $r_p(A)$ of the $n_H$ are nonzero}\end{array}\right\}$$
if $p$ divides $\det(K)$ and $$\frak D_p(K)  =0$$ otherwise, where $S_H(d(M)) = \sum_{h\in H}d(M,h)$.
\end{definition}

The proof of the following theorem is essentially given in \cite{GRS}, but is formulated here for double concordance.

\begin{theorem}\label{thm:GRS}
Let $K\subset S^3$ be a knot and $p\in\N$ a positive prime.  If there is a positive $n\in\N$ such that $\#_nK$ is smoothly doubly slice, then $\frak D_p(K)=0$.
\end{theorem}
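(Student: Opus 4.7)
My plan is to convert the hyperbolic splitting of $H^2(\#_n M)$ into a non-negative integer relation among the numbers $S_H(d(M))$ that involves at least $r_p(A)$ order-$p$ subgroups of $A$. The starting point: if $\#_n K$ is smoothly doubly slice then $\#_n M=\Sigma_2(\#_n K)$ embeds smoothly in $S^4$, so Theorem \ref{thm:hyp_corr_terms} provides a decomposition $A^n=G_1\oplus G_2$ with $G_1\cong G_2$ and $d(\#_n M,g)=0$ for every $g\in G_1\cup G_2$. Because the spin structure of $\#_n M$ corresponds to $0\in G_1\cap G_2$ and $d$ is additive under connected sum, this immediately forces $d(M,\frak s_0)=0$, which tidies up the computation that follows.

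Next, for any nonzero $g=(g_1,\dots,g_n)\in(G_1\cup G_2)\cap A^n[p]$ the cyclic subgroup $\langle g\rangle\subset A^n$ has order $p$ and lies in $G_1\cup G_2$, so
$$0=S_{\langle g\rangle}(d(\#_n M))=\sum_{k=0}^{p-1}\sum_{i=1}^n d(M,kg_i)=\sum_{i:\,g_i\neq 0}S_{\langle g_i\rangle}(d(M)),$$
using additivity of $d$ under $\#$ together with $d(M,\frak s_0)=0$ to kill the terms with $g_i=0$. Summing over all such $g$ gives a single identity $\sum_{H\in\mathcal G_p}N_H\,S_H(d(M))=0$ with non-negative integer coefficients $N_H$, where $N_H>0$ precisely when the line $H\subset A[p]$ is contained in $\pi_i(G_j[p])$ for some index $i$ and some $j\in\{1,2\}$, with $\pi_i\colon A^n[p]\to A[p]$ the $i$th projection.

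The main obstacle is to check that at least $r_p(A)$ of the $N_H$ are positive, and I would handle this by a short contradiction argument on the $p$-torsion. If every ``hit'' line were to lie in a proper $\F_p$-subspace $U\subsetneq A[p]$, then each $\pi_i(G_j[p])$---being a subspace whose nonzero vectors generate hit lines---would satisfy $\pi_i(G_j[p])\subset U$. This would force $G_1[p]+G_2[p]\subset U^n\subsetneq A^n[p]$, contradicting the direct-sum decomposition $A^n[p]=G_1[p]\oplus G_2[p]$ inherited from $A^n=G_1\oplus G_2$. Hence the hit lines generate $A[p]$ as an $\F_p$-vector space, so their number is at least $\dim_{\F_p}A[p]=r_p(A)$, and the identity produced above is a valid witness for $\frak D_p(K)=0$.
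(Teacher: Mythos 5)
Your proof is correct, and it follows the same skeleton as the paper's: apply Theorem \ref{thm:hyp_corr_terms} to $\Sigma_2(\#_nK)=\#_n\Sigma_2(K)$ to obtain $A^n=G_1\oplus G_2$ with $d$ vanishing on $G_1\cup G_2$, turn each order-$p$ element $g$ of $G_1\cup G_2$ into a relation $\sum_i S_{\langle g_i\rangle}(d(M))=0$ via additivity of the correction terms, and add such relations to exhibit a witness for $\frak D_p(K)=0$. Where you genuinely differ is the counting step. The paper selects $r_p(A)$ linearly independent $p$-power-order elements of $A$ whose fibers meet $G_1\cup G_2$, lifts them, and rescales the lifts to have order $p$, reading off distinctness of the resulting subgroups from independence of the first coordinates. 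You instead sum over \emph{all} order-$p$ elements of $G_1\cup G_2$ and show that the set of ``hit'' lines spans $A[p]$, via the contradiction argument using $A^n[p]=G_1[p]\oplus G_2[p]$; a spanning set of lines has at least $\dim A[p]=r_p(A)$ members. Your version buys some robustness: it avoids the rescaling step (where one must check that the rescaled lift does not acquire a vanishing first coordinate), and your explicit observation that $d(M,\frak s_0)=0$ cleanly disposes of the coordinates with $g_i=0$, whose contribution $p\cdot d(M,\frak s_0)$ the paper's identity $\sum_j S_{\langle g_i^j\rangle}(f)=0$ passes over in silence. Both arguments are sound in substance; yours is the tidier bookkeeping.
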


\begin{proof}
Suppose that $J=\#_nK$ is smoothly doubly slice.  Let $N =\Sigma_2(J) = \#_n\Sigma_2(K)$.  The identification of $\spinc(\Sigma_2(K))$ with $A$ gives an identification of $\spinc(N)$ with $A^n$.  By Theorem \ref{thm:hyp_corr_terms}, there exists subgroups $G$ and $H$ in $A^n$ such that $G\oplus H=A^n$ and $G\cong H$.

Assume that $p$ divides $\det(K)$, and let $r=r_p(A)$.  Projection onto the first coordinate $\pi:A^n\to A$ is onto, so $\pi(G)+\pi(H) = A$.  Let $a_1, \ldots, a_r$ be linearly independent generators of $A$ of $p$--power order  such that $\pi^{-1}(a_i)\cap(G\cup H)$ is nonempty.  Let $g_i'\in \pi^{-1}(a_i)\cap(G\cup H)$, then $|g_i'| = p^{k_i}q$ for some positive $q\in\Z$ relatively prime to $p$.  Let $g_i = qp^{k_i-1}g_i'$.  Then $\{g_1, \ldots, g_r\}$ is a collection elements of order $p$ in $G\cup H$.  Furthermore, the elements of $\{\pi(g_1), \ldots, \pi(g_r)\}$ are linearly independent in $A$, so, as elements of $\mathcal G_p$, $\langle g_i\rangle = \langle g_j\rangle$ if and only if $i=j$.  Write $g_i = (g_i^1, \ldots, g_i^n)$ for $i=1, \ldots, r$.


By Theorem \ref{thm:hyp_corr_terms}, $d(N, x) = 0$ for all $x\in G\cup H$.  Let $f:A\to\Q$ be given by $f(x) = d(N,x)$, and let $f^{(n)}:A^n\to\Q$ be given by $f(x_1,\ldots,x_n) = f(x_1)+\cdots+f(x_n)$. Since $\langle g_i\rangle<G\cup H$, we have
\begin{eqnarray*} \sum_{m=0}^{p-1}f^{(n)}(mg_i)= 0 & \implies &  \sum_{m=0}^{p-1}\sum_{j=1}^nf(mg_i^j)= 0 \\
& \implies &  \sum_{j=1}^n\sum_{m=0}^{p-1}f(mg_i^j)= 0 \\
& \implies &  \sum_{j=1}^nS_{\langle g_i^j\rangle}(f)= 0 \\
& \implies &  \sum_{j=1}^nS_{\langle g_i^j\rangle}(d(N))= 0
\end{eqnarray*}
Since $\langle g_i^j\rangle\in \mathcal G_p$ for each $j$,
$$\sum_{j=1}^nS_{\langle g_i^j\rangle}(d(N)) = \sum_{H\in\mathcal G_p}n_HS_H(d(N)),$$
with at least one $n_H$ nonzero (since at least $g_i^1$ is nontrivial).  For each $j=1, \ldots, r$, we get a similar linear combination, and, since the $g_j^1$ are independent, each linear combination is nontrivial on a distinct element of $\mathcal G_p$.  Summing, we get
$$\sum_{i=1}^r\sum_{j=1}^nS_{\langle g_i^j\rangle}(d(N)) = \sum_{H\in\mathcal G_p}n_HS_H(d(N)),$$
where at least $r$ of the $n_H$ are nonzero.  It follows that $\mathcal D_p(K)=0$, as desired.

\end{proof}

To prove Theorem \ref{thm:main2}, we will need to understand $S_G(f)$ for each subgroup $G$ of $\Z_p\oplus\Z_p$.  Let $G_\star =\langle(1,1)\rangle$ and let $G_a = \langle (a,a+1)\rangle$ for $a\in\Z_p$.  Then, together, $G_\star$ and the $G_a$ represent the $p+1$ distinct order $p$ subgroups of $\Z_p\oplus\Z_p$. 

First let us consider $Z=L(p,1)\#L(p,-1)$ for a positive prime $p$.  We saw in Subsection \ref{subsec:enumerate}  that we have an affine identification $[\frak s_i,\frak s_j]\sim (i,j)$ between $\spinc(Z)$ and $Z_p\oplus\Z_p$. 

Let $f:H_1(Z)\to\Q$ be given by $f(x)=d(Z,[\frak s_i,\frak s_j])$, where $[\frak s_i,\frak s_j]\sim x$ is the given affine identification.  It is possible to check using Equation \ref{eqn:lens_corr_terms}  that
$$S_a^{\text{lens}}=S_{G_a}(f) = \begin{cases}
\frac{(p-1)(p+1)}{6} & \text{ if $a=0$,} \\
-\frac{(p-1)(p+1)}{6} & \text{ if $a=p-1$,} \\
0 & \text{ if $a=\star$,}\\
0 & \text{ otherwise.}
\end{cases}
$$

Furthermore, by Proposition \ref{prop:X_corr_terms}, we know that 
$$\mathcal D(L(n,1)\#L(n,-1)) - \mathcal D(\mathcal Z_{p,k_p})$$
is given by $\mathcal M$.
Let $S'_G = \sum_{g\in G}\mathcal M_g$, where $\mathcal M_g=\mathcal M_{i,j}$, if  $g=(i,j)\in\Z_p$.  Then, we see that

$$S'_{G_a} = \begin{cases}
2k(p-3)+4 & \text{ if $a=0$,} \\
0 & \text{ if $a=p-1$,} \\
0 & \text{ if $a=\star$,}\\
\text{(large positive number)} & \text{ otherwise.}
\end{cases}
$$

It follows that the pertinent sums for $\mathcal Z_{p,k_p}$ are given by $S^{\mathcal Z_{p,k_p}}_{G_a}(f) = S_a^{\text{lens}} - S'_{G_a}$.  So,
$$S^{\mathcal Z_{p,k_p}}_{G_a} = \begin{cases}
\frac{(p-1)(p+1)}{6}-(2k(p-3)+4) & \text{ if $a=0$,} \\
-\frac{(p-1)(p+1)}{6} & \text{ if $a=p-1$,} \\
0 & \text{ if $a=\star$,}\\
\text{(large negative number)} & \text{ otherwise.}
\end{cases}
$$

The upshot is that $S_{G_a}^{\mathcal Z_{p,k_p}}$ will be strictly negative for all $a\not=\star$ if and only if
$$\frac{(p-1)(p+1)}{6}-(2k(p-3)+4)<0.$$
The left side will be negative if $k\geq \frac{p+5}{12}$.  As we saw above in Subsection \ref{subsec:coefficients}, we will let $k_p=\left\lceil{\frac{p+6}{12}}\right\rceil$, which will satisfy this condition. Now we can prove the following, recalling our set-up from Section \ref{section:geometry}.

\begin{proposition}
Let $\mathcal K_{p,k_p} = I_{\#_{2k_p}J,p}$, where $J$ is $T_{2,3}$ or $D$, and where $k_p=\left\lceil{\frac{p+6}{12}}\right\rceil$. Then,
\begin{enumerate}
\item No knot in the span (under connected sum) of the $\mathcal K_{p,k_p}$ is smoothly doubly slice.
\item Each of the $\mathcal K_{p,k_p}$ has order greater than two in $\mathcal C_{\mathcal D}$.
\item The collection $\{\mathcal K_{p,k_p}\}$ forms a basis for an infinitely generated subgroup of $\mathcal C_\mathcal D$. 
\end{enumerate}
\end{proposition}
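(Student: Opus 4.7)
The plan is to combine the correction-term computation above with Theorem~\ref{thm:hyp_corr_terms} for part~(1) and Proposition~\ref{prop:stable_terms}(2) for parts~(2) and~(3), using throughout the structural observation that $G_\star$ is the unique order-$p$ subgroup of $H^2(\mathcal Z_{p,k_p}) \cong \Z_p \oplus \Z_p$ on which the correction-term sum vanishes.

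First I would verify that the formula for $S_{G_a}^{\mathcal Z_{p,k_p}}$ established above for $\mathcal K_{p,k_p} = I_{\#_{k_p}D,p}$ carries over, with $k_p$ replaced by $2k_p$, to the modified knots $\mathcal K_{p,k_p} = I_{\#_{2k_p}J,p}$ with $J \in \{T_{2,3}, D\}$. Both $T_{2,3}$ and $D$ are genus-one knots whose relevant $V_k, H_k$ data produce matrices $\mathcal M$ of the same shape, so the conclusion persists for the chosen $k_p = \lceil (p+6)/12 \rceil$: $S_{G_\star}^{\mathcal Z_{p,k_p}} = 0$ while $S_{G_a}^{\mathcal Z_{p,k_p}} < 0$ for every other $a \in \Z_p$, with the enlarged coefficient $2k_p$ only strengthening the strict negativity.

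The uniform structural ingredient for all three parts is the following. Let $K = \#_{i_1}\mathcal K_{p_1,k_{p_1}} \# \cdots \# \#_{i_n}\mathcal K_{p_n,k_{p_n}}$, fix an index $j$ with $i_j \neq 0$, and set $p = p_j$; the $p$-torsion part $A_p$ of $H^2(\Sigma_2(K))$ is $(\Z_p \oplus \Z_p)^{|i_j|}$, and the only subgroup $G \leq A_p$ of order $p^{|i_j|}$ on which $d$ vanishes identically is the diagonal $G_\star^{|i_j|}$. Indeed, if $d \equiv 0$ on $G$ then every order-$p$ subgroup $H \leq G$ has $S_H(d) = 0$; since $d$ on $A_p$ is a direct sum of $|i_j|$ signed copies of $d(\mathcal Z_{p,k_p},\cdot)$ with signs all equal to the sign of $i_j$, the sum $S_H$ decomposes as same-sign contributions $\pm S_{\langle g^\ell \rangle}^{\mathcal Z_{p,k_p}}$, forcing each coordinate of a generator of $H$ into $\{0\} \cup G_\star$ by Step~1; hence $G \subseteq G_\star^{|i_j|}$ and finally $G = G_\star^{|i_j|}$ by cardinality. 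Since two complementary copies of $G_\star^{|i_j|}$ inside $A_p$ cannot exist, the $p_j$-component of the linking triple of $\Sigma_2(K)$ admits no hyperbolic splitting.

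Part~(1) now follows from Theorem~\ref{thm:hyp_corr_terms}: a smoothly doubly slice $K$ would make $\Sigma_2(K)$ embed smoothly in $S^4$, furnishing the forbidden hyperbolic splitting of $A_p$. For part~(2), if $\mathcal K_{p,k_p}$ had order $n \in \{1,2\}$ in $\mathcal C_\mathcal D$, then $\#_n\mathcal K_{p,k_p}$ would be stably doubly slice with $p$-rank $2n \leq 4$, so Proposition~\ref{prop:stable_terms}(2) would supply the forbidden splitting on $A_p = (\Z_p \oplus \Z_p)^n$. For part~(3), if the subgroup of $\mathcal C_\mathcal D$ generated by the $\mathcal K_{p,k_p}$ were finitely generated, some finite subfamily $\{\mathcal K_{p_1,k_{p_1}}, \ldots, \mathcal K_{p_N,k_{p_N}}\}$ would generate it; any $\mathcal K_{q,k_q}$ with $q \notin \{p_1,\ldots,p_N\}$ would then satisfy a relation $\mathcal K_{q,k_q} - \sum_i a_i \mathcal K_{p_i,k_{p_i}} = 0$ in $\mathcal C_\mathcal D$, exhibiting this combination as stably doubly slice with $q$-rank exactly $2$, so Proposition~\ref{prop:stable_terms}(2) again provides the forbidden splitting. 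The main obstacle is Step~1: verifying through Appendix~\ref{appendix} that $2k_p = 2\lceil (p+6)/12 \rceil$ remains uniformly large enough, across all primes $p$ and for both choices of $J$, to force the strict inequality $S_{G_a}^{\mathcal Z_{p,k_p}} < 0$ for $a \neq \star$; once that propagation of the $V_k, H_k$ data through the surgery-formula calculations is in place, the structural argument above is clean and uniform.
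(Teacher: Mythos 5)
Your proof is correct and follows essentially the same route as the paper: the decisive inputs are identical, namely the computation that $S_{G_a}^{\mathcal Z_{p,k_p}}$ vanishes only for $a=\star$ and is strictly negative otherwise, together with Theorem \ref{thm:hyp_corr_terms} and Proposition \ref{prop:stable_terms}(2) applied to the rank-$2n\leq 4$ homogeneous $p$--components for the order and independence statements. The only difference is cosmetic: for part (1) you inline the uniqueness-of-metabolizer argument directly from Theorem \ref{thm:hyp_corr_terms}, whereas the paper packages the same observation (any nonnegative combination $\sum n_H S_H$ supported on at least two distinct order-$p$ subgroups is strictly negative) through the invariant $\frak D_p$ and Theorem \ref{thm:GRS}.
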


Note that this is independent of the indeterminacies $i\leftrightarrow -i$ and $j\leftrightarrow -j$ discussed earlier.  Notice also that Example \ref{ex:rank6} illustrates why we cannot claim that the $\mathcal K_{p,k_p}$ have infinite order in $\mathcal C_{\mathcal D}$.

\begin{proof}
By Corollary \ref{coro:terms}, we know that each of these knots is nontrivial in $\mathcal C_\mathcal D$.   The discussion preceding this proposition shows that the Grigsby-Ruberman-Strle invariant $\mathcal D_p$ is nonzero for $\mathcal K_{p,k_p}$.  This follows because, for these knots, $S^{\mathcal Z_{p,k_p}}_{G_a}$ is nonnegative for only one subgroup of $\Z_p\oplus\Z_p$.  Since the condition on $\mathcal D_p$ states that $n_G$ must be nonzero for at least two distinct subgroups $G$, the sum $\sum_{G\in\mathcal G_p}n_GS_G(M)$ will always be nonzero.  By Theorem \ref{thm:GRS}, this shows that $\#_a\mathcal K_{p,k_p}$ is not doubly slice for all $a\in \N$.  By Proposition \ref{prop:stable_terms}, $\mathcal K_{p,k_p}\#\mathcal K_{p,k_p}$ is nontrivial in $\mathcal C_\mathcal D$, since $\mathcal A_p$ is rank 4 and not hyperbolic for these knots.

Suppose that 
$$\mathcal K = \left(\#_{n_{p_1}}\mathcal K_{p_1,k_{p_1}}\right)\#\left(\#_{n_{p_2}}\mathcal K_{p_2,k_{p_2}}\right)\#\cdots\#\left(\#_{n_{p_m}}\mathcal K_{p_m,k_{p_m}}\right).$$
Since the $p_i$ are all distinct primes, we get that $\mathcal D_{p_i}(K) = \mathcal D_{p_i}(\#_{n_i}\mathcal K_{p_i,k_{p_i}})$.  It is easy to see that, for the knots in question, $\mathcal D_{p_i}(\#_{n_i}\mathcal K_{p_i,k_{p_i}}) \not=0$, since $S^{\mathcal Z_{p_i,k_{p_i}}}_{G_a}$ is always nonpositive and strictly negative away from a single metabolizer.  It follows that $\mathcal D_{p_i}(\mathcal K)\not=0$.  This proves that $K$ is not doubly slice, and if any of the $n_{p_i}$ are less than 3, then $\mathcal K$ is nontrivial in $\mathcal C_\mathcal D$.
\end{proof}

By Corollary \ref{coro:doubly_slice}, each member of $\{\mathcal K_{p,k_p}\}$ is topologically doubly slice.  It follows that these knots generate an infinitely generated subgroup of $\ker\psi_\mathcal D$ that consists of knots that are not smoothly doubly slice.  This proves Theorem \ref{thm:main2}.


\appendix

\section{Assorted knot Floer complex calculations}\label{appendix}

The goal of this appendix is to perform the correction term calculations required by the proof in Section \ref{section:calculations}.  Throughout, we will let $J = \#_mK$ be the connected sum of $m$ copies of $K$, where $K$ will always be one of three knots: the unknot; the right-handed trefoil $T_{2,3}$; or the positive, untwisted Whitehead double of the right-handed trefoil $D$.  Let $X = S^3_n(J\#J)$ and $Y=S^3_{n^2+n}(J\#J\#T_{n,n+1})$; throughout, $n$ will be a positive odd number.  The following facts are collected from two theorems of Hedden, Kim, and Livingston \cite[Proposition 6.1, Theorem B.1]{HKL}, and are the basis what follows.  We will work with coefficients in $\F_2$ throughout.

\begin{theorem}[\cite{HKL}]\label{thm:HKL}\ 
\begin{enumerate}
\item The chain complex $CFK^\infty(S^3, D)$ is filtered chain homotopy equivalent to $CFK^\infty(S^3, T_{2,3})\oplus \mathcal A$, where $\mathcal A$ is an acyclic subcomplex.  
\item The chain complex $CFK^\infty(S^3,\#_mT_{2,3})\simeq CFK^\infty(S^3, T_{2,3})^{\otimes m}$ is filtered chain homotopy equivalent to $CFK^\infty(S^3, T_{2,2m+1})\oplus\mathcal A'$, where $\mathcal A'$ is an acyclic subcomplex.
\end{enumerate}
\end{theorem}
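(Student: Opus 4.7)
The plan is to treat the two parts separately, handling part (2) first because it is a purely algebraic statement about staircase complexes, while part (1) requires geometric input about the Whitehead pattern.

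For part (2), I would begin by writing out $CFK^\infty(S^3, T_{2,3})$ explicitly: in a fundamental $U$-domain it has three generators $a_0, b_0, a_1$ at lattice points $(0,1)$, $(1,1)$, $(1,0)$, with $\partial b_0 = a_0 + a_1$ and no other differentials; the rest of the complex is obtained by the $U$-action. The analogous description of $CFK^\infty(S^3, T_{2,2m+1})$ is a ``staircase'' of $2m+1$ generators zig-zagging from $(0,m)$ down to $(m,0)$. I would proceed by induction on $m$, the base case $m=1$ being tautological. For the inductive step, tensoring the hypothesis $CFK^\infty(T_{2,3})^{\otimes(m-1)} \simeq CFK^\infty(T_{2,2m-1}) \oplus \mathcal{A}'_{m-1}$ with $CFK^\infty(T_{2,3})$ on both sides and using the fact that the tensor product of an acyclic complex with any complex is acyclic, the problem reduces to proving
\[
CFK^\infty(T_{2,2m-1}) \otimes CFK^\infty(T_{2,3}) \simeq CFK^\infty(T_{2,2m+1}) \oplus \mathcal{B}_m,
\]
where $\mathcal{B}_m$ is acyclic.

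The left-hand side above has $3(2m-1)$ generators per $U$-orbit, arranged on a readily drawn grid. I would then produce an explicit filtered change of basis exhibiting a $(2m+1)$-generator sub-staircase of the expected shape as a direct summand, and pairing the remaining $4m-4$ generators into $(2m-2)$ cancelling pairs $(u,v)$ with $\partial v = u$ at adjacent lattice points, each such pair forming an acyclic two-generator summand. The change of basis is combinatorial and can be written down uniformly in $m$ by identifying which tensor monomials in $\{a_0, b_0, a_1\}^{\otimes}$ correspond to each step of the $T_{2,2m+1}$ staircase.

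For part (1), the plan is to invoke Hedden's explicit analysis of the knot Floer complex of untwisted positive Whitehead doubles. Since $\tau(T_{2,3}) = 1 > 0$, Hedden's results pin down $\tau(D) = 1$ and, more importantly, constrain the essential (non-acyclic) summand of $CFK^\infty(S^3, D)$ to coincide with the three-generator staircase of $T_{2,3}$. The additional generators carried by $D$, which arise from the clasps of the Whitehead pattern, can be organized into algebraically cancelling pairs over $\F$; these pairs assemble into the acyclic summand $\mathcal{A}$. I would identify these pairs directly from the genus-one Seifert surface of $D$ and the Alexander grading structure forced by Hedden's spectral-sequence arguments.

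The principal obstacle in both parts is the same: exhibiting a genuine filtered chain homotopy equivalence, rather than merely matching numerical invariants such as $\tau$, $\Upsilon$, or $\widehat{HFK}$. In part (2) this means actually writing down the change of basis on the tensor-product grid and verifying that the differential respects the claimed splitting; this is tedious but purely mechanical. In part (1) the hard part is extracting the full complex structure from the Whitehead pattern, where one must reconcile the bordered- or grid-diagram computations of Hedden with the Alexander filtration of the resulting knot; once this is done, the acyclic cancellations follow from the rank comparison between $\widehat{HFK}(D)$ and $\widehat{HFK}(T_{2,3})$.
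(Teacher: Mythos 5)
The paper does not actually prove this statement: it is imported wholesale from Hedden--Kim--Livingston (\cite{HKL}, Proposition 6.1 and Theorem B.1), so there is no in-paper argument to compare yours against. Judged as a reconstruction of the \cite{HKL} proof, your plan for part (2) is essentially the right one --- induction on $m$, reducing to $CFK^\infty(T_{2,2m-1})\otimes CFK^\infty(T_{2,3})\simeq CFK^\infty(T_{2,2m+1})\oplus(\text{acyclic})$ via an explicit filtration-preserving change of basis --- and it is the same mechanism the paper itself deploys later in the appendix (the pictorial lemma of Figure \ref{fig:TensorLemma} and the germs in Figure \ref{fig:TorsionComplexes}). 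One concrete error, though: the acyclic complement does \emph{not} decompose into $(2m-2)$ two-generator summands $\partial v = u$ at adjacent lattice points. It decomposes into $(m-1)$ four-generator ``box'' summands $\{a,b,c,d\}$ with $\partial d = b+c$, $\partial b=\partial c = a$, where $b$ and $c$ sit at incomparable filtration levels such as $(1,0)$ and $(0,1)$. The change of basis $b\mapsto b+c$ that would split such a box into two arrows is not filtration-preserving, and no filtered change of basis splits it (any new basis element replacing $d$ still has differential with components at both incomparable levels). This does not sink the proof --- the theorem only asserts the complement is acyclic, which the boxes are --- but as written your cancellation step would fail, and you would need to replace ``two-generator pairs'' by ``box summands.''

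For part (1) your proposal correctly identifies the input (Hedden's computation of $\widehat{HFK}$ of untwisted positive Whitehead doubles, together with $\tau(D)=1$), but the actual content of \cite{HKL}'s Proposition 6.1 is precisely the step you leave as a black box: passing from the ranks and Maslov gradings of $\widehat{HFK}(D,s)$ for $s\in\{-1,0,1\}$, the genus-one constraint on the support of the complex, and the value of $\tau$, to a determination of the full filtered chain homotopy type and hence the splitting off of the trefoil staircase. Saying that the extra generators ``can be organized into algebraically cancelling pairs over $\mathbb{F}$'' asserts the conclusion rather than proving it; one must rule out differentials that would tie the would-be acyclic generators to the staircase in a filtered-indecomposable way. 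So part (2) is correct modulo the box issue, while part (1) is a correct pointer to the literature rather than a proof.
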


First, we calculate the correction terms for $X$ when $m=2k$, the case relevant to our discussion.  Recall that the affine identification $\spinc(X)\cong \Z_n$ gives rise to a natural indexing of $\frak s_i\in\spinc(X)$, where $|i|\leq (n-1)/2$.  This symmetry of this indexing is advantageous, and will be used here.  Let $\mathcal D(X)$ denote the collection of correction terms associated to $X$, i.e.,
$$\mathcal D(X) = \{d(X,\frak s_{\frac{-n+1}{2}}), d(X,\frak s_{\frac{-n+3}{2}}),\ldots, d(X,\frak s_{-1}), d(X,\frak s_0), d(X,\frak s_1),\ldots, d(X,\frak s_{\frac{n-3}{2}}), d(X,\frak s_{\frac{n-1}{2}})\}.$$

\begin{lemma}\label{lemma:Y_corr_terms}
Let $X=S^3_n(\#_{2k}K)$, where $K$ is $T_{2,3}$ or $Wh^+(T_{2,3},0)$.  Then, $ \mathcal D(L(n,1)) - \mathcal D(X)$ is given by 
$$ \{0,\ldots, 0,2,2,4,4,\ldots,2k-2,2k-2,2k,2k,2k,2k-2,2k-2,\ldots,4,4,2,2,0,\ldots, 0\}.$$
\end{lemma}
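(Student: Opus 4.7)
The plan is to combine Theorem \ref{thm:HKL} with the Ni--Wu surgery formula (Theorem \ref{thm:ni-wu}). First, I would observe that in both cases $K = T_{2,3}$ and $K = D$, Theorem \ref{thm:HKL} lets us replace $\#_{2k}K$ by $T_{2,4k+1}$ for the purposes of computing $V$- and $H$-invariants. Part (2) of that theorem gives directly that $CFK^\infty(\#_{2k}T_{2,3}) \simeq CFK^\infty(T_{2,4k+1}) \oplus \mathcal A$ with $\mathcal A$ acyclic; for the Whitehead double, part (1) reduces $CFK^\infty(D)$ to that of $T_{2,3}$ modulo acyclic, and the tensor product description of the Floer complex of a connected sum (together with the fact that tensoring anything with an acyclic complex yields an acyclic complex) brings us to the same torus-knot complex modulo acyclic. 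Since the $V$- and $H$-sequences depend only on the filtered chain homotopy type and vanish on acyclic summands, we obtain $V_i(\#_{2k}K) = V_i(T_{2,4k+1})$ and $H_i(\#_{2k}K) = H_i(T_{2,4k+1})$.

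Next, I would note that the surgery coefficient satisfies $n \geq 4k - 1 = 2g(T_{2,4k+1}) - 1$ by Subsection \ref{subsec:coefficients}, so Theorem \ref{thm:ni-wu} applies and yields
$$d(X,\frak s_i) = d(L(n,1),i) - 2\max\{V_i, H_{i-n}\}.$$
The $V$-sequence of $T_{2,2g+1}$ can be read off its staircase knot Floer complex and is $V_i = \max\{0, \lceil (g-i)/2 \rceil\}$. For $g = 2k$ this gives $V_0 = V_1 = k$, $V_2 = V_3 = k-1$, $\ldots$, $V_{2k-2} = V_{2k-1} = 1$, and $V_i = 0$ for $i \geq 2k$. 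Using the symmetry $H_j = V_{-j}$ together with the bound $n-i \geq (n+1)/2 \geq 2k$ valid for every $i$ in the symmetric range $|i| \leq (n-1)/2$, we get $H_{i-n} = V_{n-i} = 0$ throughout that range, so $\max\{V_i, H_{i-n}\} = V_i$ whenever $i \geq 0$.

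Finally, the $\spinc$-conjugation symmetry $d(X,\frak s_i) = d(X,\frak s_{-i})$ (and likewise for $L(n,1)$) extends the calculation to negative symmetric $i$: the entry of $\mathcal D(L(n,1)) - \mathcal D(X)$ at symmetric index $\pm i$ is $2V_{|i|}$. Writing this out in the order $i = -(n-1)/2, \ldots, (n-1)/2$ produces the displayed vector, with the triple $2k,2k,2k$ at $i = -1,0,1$ (coming from $V_0 = V_1 = k$) and matched pairs $2k-2s, 2k-2s$ on each side (coming from $V_{2s} = V_{2s+1} = k-s$), flanked by zeros for $|i| \geq 2k$.

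The main obstacle I foresee is the clean identification of the $V$-sequence for $T_{2,4k+1}$ from its staircase complex, together with careful bookkeeping between the $\{0, 1, \ldots, n-1\}$ indexing used in Theorem \ref{thm:ni-wu} and the symmetric $\{-(n-1)/2, \ldots, (n-1)/2\}$ indexing of the lemma; once the HKL reduction is in place and the torus-knot $V$-values are tabulated, everything else is direct substitution.
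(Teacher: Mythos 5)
Your proposal is correct and follows essentially the same route as the paper: reduce $\#_{2k}K$ to $T_{2,4k+1}$ modulo acyclic summands via Theorem \ref{thm:HKL}, read the $V$-sequence off the staircase complex (your closed form $V_i=\max\{0,\lceil(2k-i)/2\rceil\}$ agrees with the paper's tabulated list $\{k,k,k-1,k-1,\ldots,1,1,0,\ldots\}$), and apply Theorem \ref{thm:ni-wu} with the symmetry $H_l=V_{-l}$ and the bound $n\geq 4k-1$ to handle the indexing.
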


Of course, if $K$ is the unknot, then $\mathcal D(X) = \mathcal D(L(n,1))$.

\begin{proof}
By combining parts (1) and (2) of Theorem \ref{thm:HKL}, we get that
$$CFK^\infty(S^3, \#_mD)\simeq CFK^\infty(S^3, \#_mT_{2,3})\oplus\mathcal A'' \simeq CFK^\infty(S^3, T_{2,2m+1})\oplus\mathcal A'''.$$
The acyclic pieces can contribute to the homology of $HF^+(X)$, but these contributions are confined to $HF_{red}(X)$ and will not affect the correction term calculations.  It follows that $d(X,\frak s_i) = d(S^3_n(T_{2,2m+1}),\frak s_i)$ for all $|i|\leq (n-1)/2$.

The complex $C=CFK^\infty(S^3, T_{2,2m+1})$ can be easily obtained from the Alexander polynomial $\Delta_{T_{2,2m+1}}(t)$, since $T_{2,2m+1}$ is an alternating $L$-space knot \cite{oz-sz:alternating}, and is shown in Figure \ref{fig:AltTorusComplexes}.  Its basic building block (which we call a \emph{germ}) can be seen in Figure \ref{fig:AltTorusComplexes}.  One way to characterize which piece of the total complex is the germ $G$ is to say that $G$ is contained in the first $(i,j)$--quadrant, but $UG$ is not.  The total complex is obtained by taking $\Z$ copies of $G$, which are related by $U$--translation, i.e., $C = \sqcup_{z\in\Z}U^zG$.

There is a simple way to calculate $V_l=V_l(T_{2,2m+1})$ in this case \cite{ni-wu:cosmetic, ni-wu:rational}.  Consider the subcomplex $C_{\{\max(i,j-l)\geq 0\}}$.  Then,
$$V_l = \max\{z\ :\ U^zG\cap C_{\{\max(i,j-l)\geq 0\}}=\emptyset\}.$$
See, for example, Figure \ref{fig:VsAltTorus}.  With this in mind, it is now easy to see that
$$\{V_l(T_{2,2m+1})\}_{l\geq 0} = \begin{cases} 
\{k,k,k-1,k-1,\ldots, 2,2,1,1,0,\ldots\} & \text{ if $m=2k$}, \\
\{k,k-1,k-1,\ldots, 2,2,1,1,0,\ldots\} & \text{ if $m=2k+1$}, \\
\end{cases}
$$
where each value less than $k$ appears twice in each list, and the infinite tails each consists of zeros. Simply start with the shaded box in the third quadrant, as in Figure \ref{fig:VsAltTorus}(b), and notice how the homology changes as the box is moved vertically upward. If we recall that $H_{l} = V_{-l}$, then Theorem \ref{thm:ni-wu} completes the proof.
\end{proof}

\begin{figure}
\centering
\includegraphics[scale = .33]{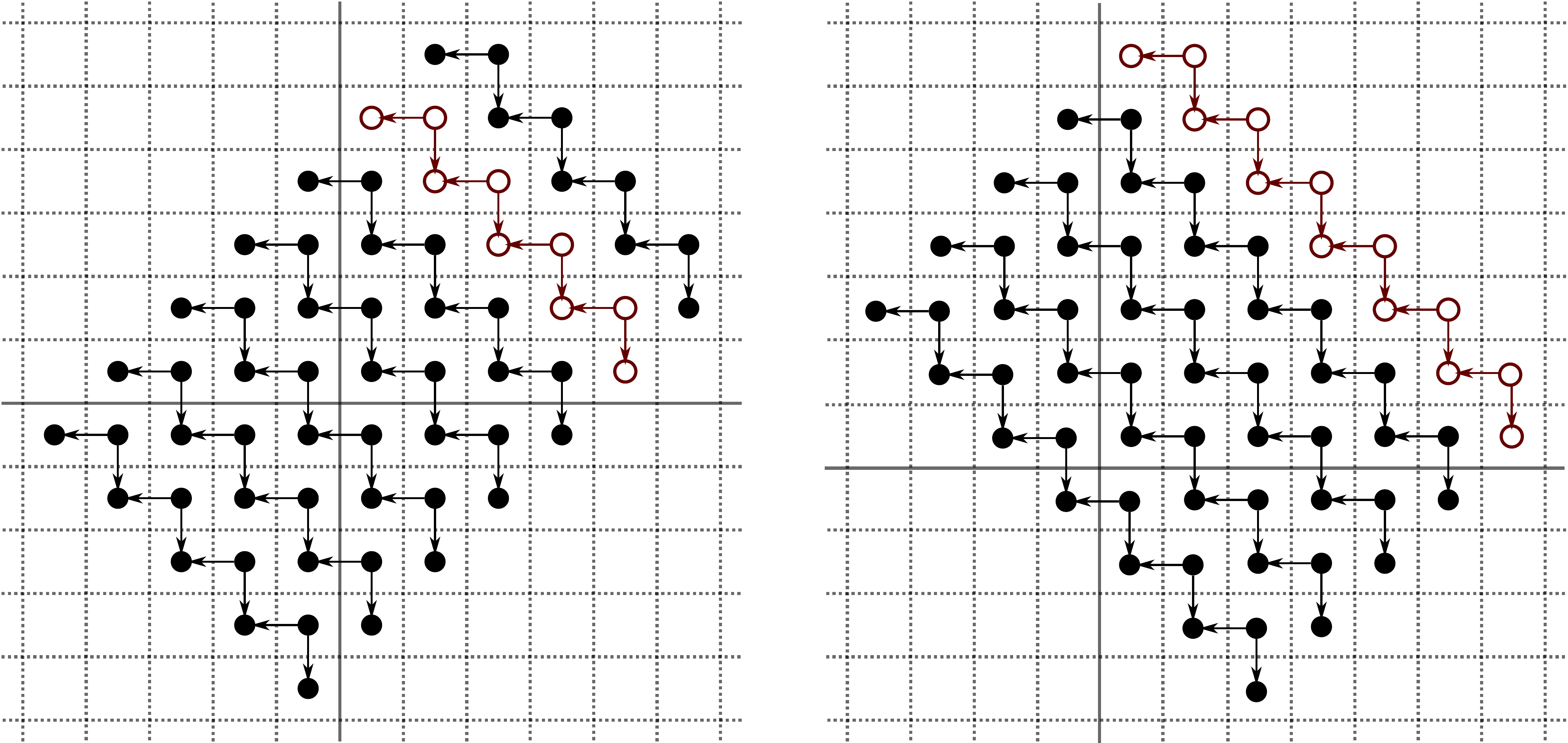}
\put(-307,-20){(a)}
\put(-101,-20){(b)}
\caption{Portions of the complex $CFK^\infty(S^3, T_{2,2m+1})$ is shown above for (a) $m=4$ and (b) $m=6$.  The germ of each complex is shown hollow in red.}
\label{fig:AltTorusComplexes}
\end{figure}

\begin{figure}
\centering
\includegraphics[scale = .33]{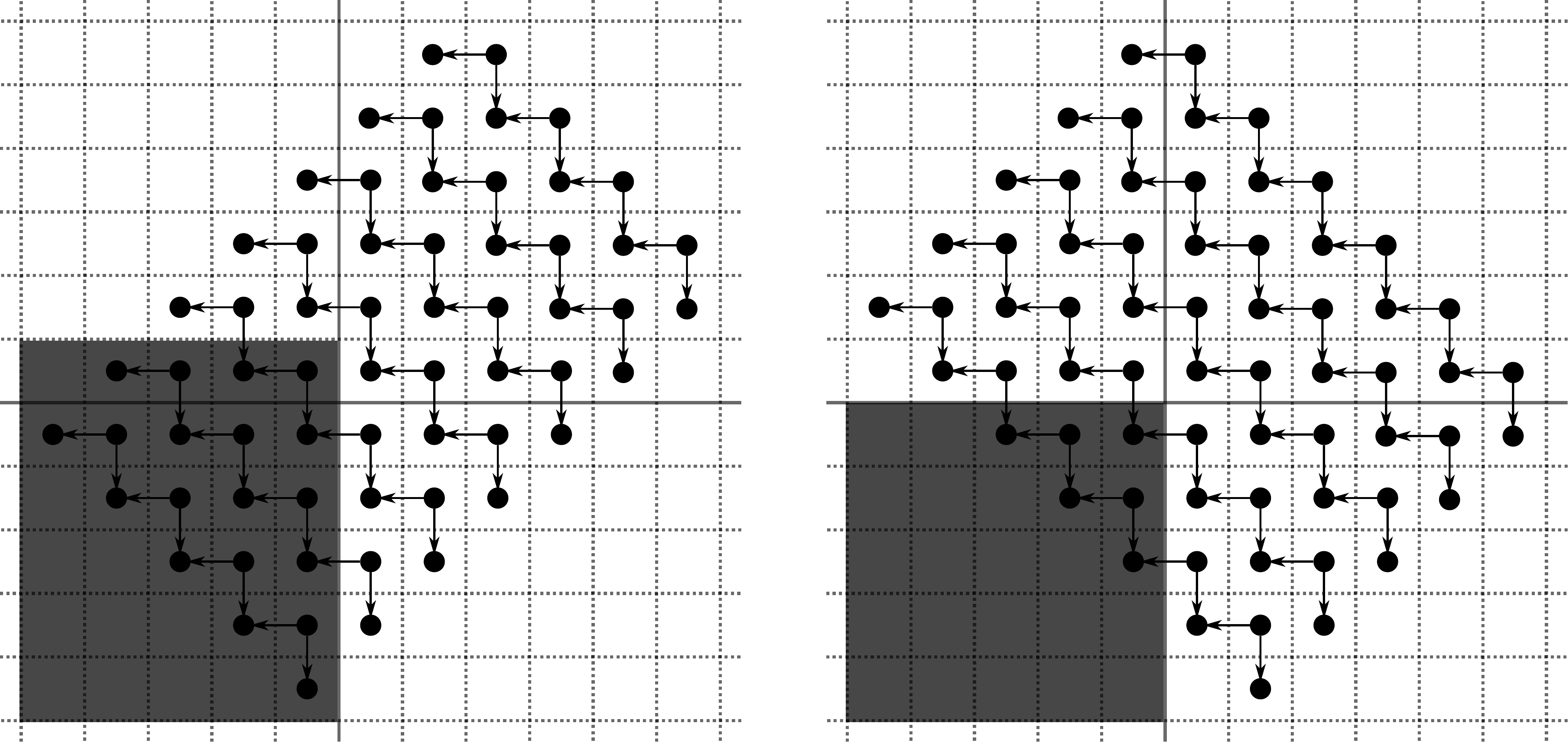}
\put(-307,-20){(a)}
\put(-101,-20){(b)}
\caption{(a) The calculation showing that $V_1(T_{2,9})=2$.  (b) The calculation showing that $V_0(T_{2,13})=3$.}
\label{fig:VsAltTorus}
\end{figure}

Let $L_k$ denote the list of even integers given in Lemma \ref{lemma:Y_corr_terms}, but with each value halved, and consider the bijection between $L_k$ and $\Z$ where the central $k$ corresponds to zero and the values to the left and right correspond to the negative and positive integers, respectively. Let $L_k^t$ denote a truncated version of $L_k$ where the value of any term in $L_k^t$ corresponding to an integer less than $-t$ is set to zero.  Let $L_k^t(x)$ represent the element of $L_k^t$ corresponding to $x\in\Z$.  For example,
$$
\begin{array}{ccc}
L_3 & = &  \{\ldots,0,0,1,1,2,2,3,3,3,2,2,1,1,0,0\ldots\}, \\
L_3^1 & = &  \{\ldots,0,0,0,0,0,0,3,3,3,2,2,1,1,0,0\ldots\}, \\
L_3^3 & = &  \{\ldots,0,0,0,0,2,2,3,3,3,2,2,1,1,0,0\ldots\}, \\
\end{array}
$$
and $L_3^1(-1)=3$.  We will make use of these truncated lists later.  




\begin{figure}
\centering
\includegraphics[scale = .75]{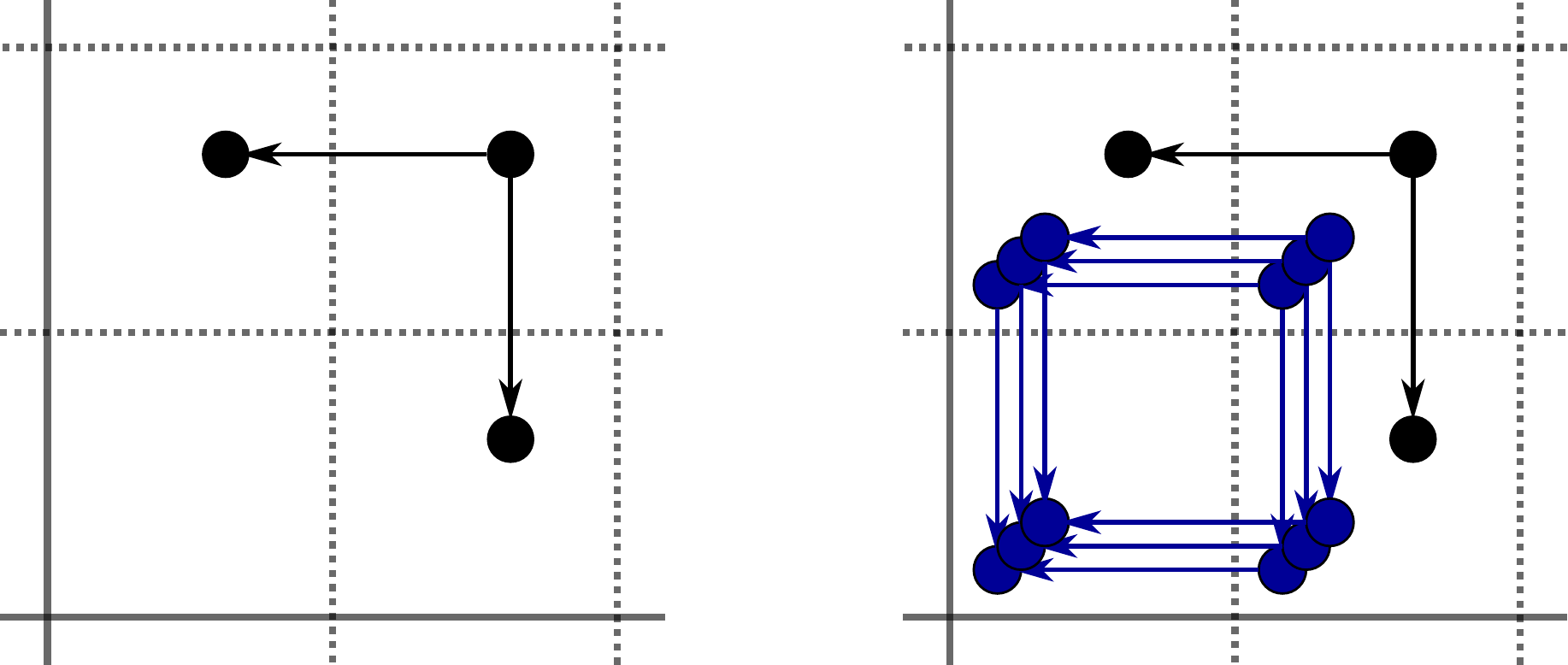}
\put(-321,-20){(a)}
\put(-91,-20){(b)}
\put(-354,134){0}
\put(-125,135){0}
\put(-58,19){\textcolor{Blue}{\LARGE$\star$}}
\caption{The complexes (a) $CFK^\infty(S^3,T_{2,3})$ and (b) $CFK^\infty(S^3,D)$ are shown with gradings; the three chains adjacent to the star have gradings $-2,-2$, and $-1$.}
\label{fig:TrefoilComplexes}
\end{figure}


Our next task is to give a calculation for the correction terms of $Y$.  To start, consider the case when $K$ is the unknot, so $Y = S^3_{n^2+n}(T_{n,n+1})$.

\begin{lemma}\label{lemma:Vs_Torus}
Let $n=2d+1$ for $d\in\N$.  Then, $\{V_l(T_{n,n+1})\}_{l\geq 0}$ is given by
$$
{\SMALL
\begin{array}{cccccccccc}
\{ & Tr(d), & Tr(d), & \ldots & Tr(d), & Tr(d)-1, & Tr(d) - 2, & \ldots, & Tr(d-1)+2, & Tr(d-1) + 1, \\
 & Tr(d-1), & Tr(d-1), & \ldots & Tr(d-1), & Tr(d-1)-1, & Tr(d-1) - 2, & \ldots, & Tr(d-2)+2, & Tr(d-2) + 1, \\
 & Tr(d-2), & Tr(d-2), & \ldots & Tr(d-2), & Tr(d-2)-1, & Tr(d-2) - 2, & \ldots, & Tr(d-3)+2, & Tr(d-3) + 1, \\
 & & & & & \vdots & & & & \\
 & 3, & 3, & 3, & \ldots, & & \ldots, & 3, & 3, & 2, \\
 & 1, & 1, & 1, & \ldots, & & \ldots, & 1, & 1, & 1, \\
 & 0, & 0, & 0, & \ldots, & \}, && & &  \\
\end{array}
}
$$
where $Tr(k)$ denotes the $k^\text{th}$ triangular number.
\end{lemma}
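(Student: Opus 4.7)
The plan is to apply Theorem \ref{thm:surgery_formula} to the L-space knot $T_{n,n+1}$ and reduce the computation of $V_l$ to an elementary minimax over the outer corners of its $CFK^\infty$ staircase, paralleling the approach used in Lemma \ref{lemma:Y_corr_terms}.

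First I would identify the staircase structure. Since $T_{n,n+1}$ is a positive L-space knot, the complex $CFK^\infty(S^3, T_{n,n+1})$ is a staircase whose shape is read off its Alexander polynomial
\[
\Delta_{T_{n,n+1}}(t) \;=\; \frac{(1-t^{n(n+1)})(1-t)}{(1-t^n)(1-t^{n+1})}.
\]
A direct expansion for $n = 2d+1$ shows the exponents (in symmetric normalization) are strictly decreasing with consecutive gaps $1,\ n-1,\ 2,\ n-2,\ \ldots,\ d,\ d+1,\ d+1,\ d,\ \ldots,\ n-1,\ 1$, so the $n = 2d+1$ outer corners of the staircase (the even-index generators that support the natural $HF^\infty$-cycle) sit at the bigradings
\[
(a_k, b_k) \;=\; \bigl(T_k,\ T_{2d-k}\bigr), \qquad k = 0, 1, \ldots, 2d,
\]
where $T_k = k(k+1)/2$. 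One can verify this in small cases by hand: the outer corners for $T_{3,4}$ are $(0,3), (1,1), (3,0)$ and for $T_{5,6}$ they are $(0,10), (1,6), (3,3), (6,1), (10,0)$.

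Next I would reuse the germ argument from the proof of Lemma \ref{lemma:Y_corr_terms} to obtain
\[
V_l(T_{n,n+1}) \;=\; \min_{0 \leq k \leq 2d} \max\bigl(T_k,\ T_{2d-k} - l\bigr),
\]
since $V_l$ is the largest $U$-power shift $r$ such that $U^r$ applied to the staircase cycle lies entirely in the subcomplex $C_{\{\max(i, j-l) \geq 0\}}$, which forces $r \leq \max(a_k, b_k - l)$ at every outer corner.

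Finally I would verify the claimed pattern by a case analysis of this minimax. For each $r \in \{d, d-1, \ldots, 1\}$ I would show that the value $T_r$ is attained exactly on the interval
\[
T_{2d-r} - T_r \;\leq\; l \;\leq\; T_{2d-r+1} - T_r,
\]
which contains $T_{2d-r+1} - T_{2d-r} + 1 = n-r+1$ integers and thus accounts for the repeated $T_r$ entries in the $r$th row of the statement; the lower endpoint is where $k = r$ first collapses $\max(T_r, T_{2d-r}-l)$ to $T_r$, and the upper endpoint is the threshold at which $k = r-1$ takes over. For $l = T_{2d-r+1} - T_r + s$ with $1 \leq s \leq r-1$, the optimum is at $k = r-1$ and equals $\max(T_{r-1}, T_r - s) = T_r - s$, producing the decreasing tail from $T_r - 1$ down to $T_{r-1}+1$. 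Concatenating these rows for $r = d, d-1, \ldots, 1$ and appending zeros for $l \geq g = T_{2d}$ recovers the sequence in the statement.

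The hard part will be the first step: pinning down the outer corners $(T_k, T_{2d-k})$ for all odd $n$. A clean route is to induct on $d$ using the recursion satisfied by $\Delta_{T_{n,n+1}}(t)$ together with the fact that for an L-space knot the staircase is read off directly from the Alexander exponents. The remaining minimax analysis is completely elementary but needs bookkeeping to verify that each row is produced with the correct number of constant entries followed by the correct decreasing tail.
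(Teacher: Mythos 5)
Your proposal is correct and follows essentially the same route as the paper: the paper also reads off the staircase (germ) of $CFK^\infty(S^3,T_{n,n+1})$ and computes $V_l$ from how far the germ must be $U$--translated before it leaves the region $\{\max(i,j-l)\geq 0\}$, which is exactly your minimax $\min_k\max\bigl(T_k,\,T_{2d-k}-l\bigr)$ over the outer corners $(T_k,T_{2d-k})$. The only difference is presentational: the paper imports the staircase from \cite{HKL} and declares the final bookkeeping ``easy to see,'' whereas you derive the corner positions from the Alexander polynomial and carry out the interval analysis explicitly (both of which check out, e.g.\ against $V_4(T_{5,6})=2$).
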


To clarify, the above list has been displayed so as to make the pattern of its elements more clear.  On the $i^\text{th}$ line, the value $Tr(d-i+1)$ appears $d+i+1$ times, followed by sequential decreases by 1, until the next triangular number is hit, which begins a new line.  The tail of the list is all zeros.  We will refer to the first appearance of each triangular number (i.e., the first element of each line) as a \emph{pivot}.  These pivots occur when $l$ is a multiple of $n$ and correspond to the cycles in the germ $G$ of $CFK^\infty(S^3, T_{n,n+1})$ (see Figure \ref{fig:TorusKnotComplex}).

\begin{figure}
\centering
\includegraphics[scale = .28]{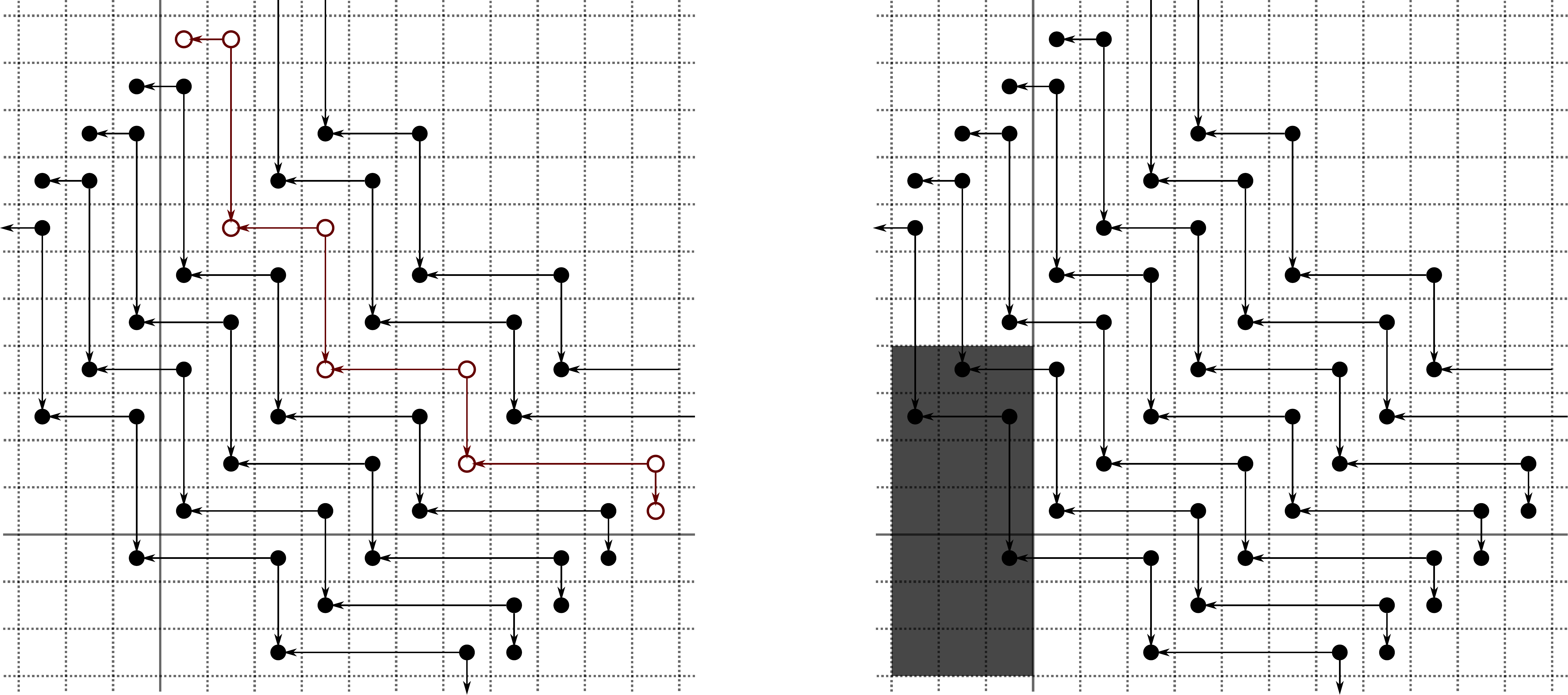}
\put(-356,-20){(a)}
\put(-106,-20){(b)}
\caption{(a) The complex $CFK^\infty(S^3,T_{n,n+1})$; here $n=5$. The calculation showing that $V_4(T_{5,6}) = 2$.}
\label{fig:TorusKnotComplex}
\end{figure}

\begin{proof}
As noted in \cite{HKL}, $C=CFK^\infty(S^3, T_{n,n+1})$ has germ $G$ as shown in red in Figure \ref{fig:TorusKnotComplex}(a).  The total complex is obtained by taking $\Z$ copies of this germ, which are related by $U$--translation, i.e., $C = \cup_{z\in\Z}U^zG$.  As in the proof of Lemma \ref{lemma:Y_corr_terms}, the $V_l = V_l(T_{n,n+1})$ are given by $$V_l = \max\{z\ :\ U^zG\cap C_{\{\max(i,j-l)\geq 0\}} = \emptyset\}.$$
See for example, Figure \ref{fig:TorusKnotComplex}(b).  Putting all this together, it is easy to see that $\{V_l\}_{l\geq 0}$ is as claimed.
\end{proof}

Lemma \ref{lemma:Vs_Torus} gives us a basis to understand the correction terms for surgeries on $J\#J\#T_{n,n+1}$.  To continue, we need to understand how the knot chain complex for $T_{n,n+1}$ changes under connected sum with $K$.

\begin{lemma}\label{lemma:Complex_Sum}
After a filtration-preserving change of basis, $CFK^\infty(S^3,J\#J\#T_{n,n+1}) = C_{sum}\oplus\mathcal A$, where a germ for $C_{sum}$ is made up of the the characteristic pieces shown in Figure \ref{fig:SumComplex}, and $\mathcal A$ is an acyclic subcomplex.
\end{lemma}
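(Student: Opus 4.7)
The plan is to combine the connected sum formula for knot Floer complexes with the reductions provided by Theorem \ref{thm:HKL} to reduce the problem to computing the tensor product of two staircase complexes associated to $L$-space torus knots. Recall that knot Floer homology is multiplicative under connected sum; over the field $\F$ this yields a filtered chain homotopy equivalence
$$CFK^\infty(S^3,K_1\#K_2) \simeq CFK^\infty(S^3,K_1)\otimes CFK^\infty(S^3,K_2).$$
Since tensor product with any finitely generated complex preserves acyclicity over a field, the acyclic summands in any splitting pass through the tensor product unchanged (as acyclic direct summands).

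First I would apply Theorem \ref{thm:HKL} iteratively to $CFK^\infty(S^3, J\#J)=CFK^\infty(S^3,\#_{2m}K)$. When $K=D$, part (1) of Theorem \ref{thm:HKL} replaces each trefoil double factor by a trefoil factor modulo an acyclic summand; part (2) then collapses $\#_{2m}T_{2,3}$ to $T_{2,4m+1}$ modulo another acyclic summand. In either case ($K=T_{2,3}$ or $K=D$), one obtains
$$CFK^\infty(S^3,J\#J)\simeq CFK^\infty(S^3,T_{2,4m+1})\oplus \mathcal A_1,$$
with $\mathcal A_1$ acyclic. Taking the tensor product with $CFK^\infty(S^3,T_{n,n+1})$ and invoking the connected sum formula yields
$$CFK^\infty(S^3,J\#J\#T_{n,n+1}) \simeq CFK^\infty(S^3,T_{2,4m+1})\otimes CFK^\infty(S^3,T_{n,n+1})\oplus \mathcal A,$$
where $\mathcal A = \mathcal A_1\otimes CFK^\infty(S^3,T_{n,n+1})$ is acyclic.

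Next, I would use the fact that both $T_{2,4m+1}$ and $T_{n,n+1}$ are $L$-space knots (as noted in \cite{oz-sz:alternating} and in the proof of Lemma \ref{lemma:Vs_Torus}), so each of their $CFK^\infty$'s is a single staircase complex (up to $U$-translation of the germ), whose shape is read off directly from the Alexander polynomial as in Figures \ref{fig:AltTorusComplexes} and \ref{fig:TorusKnotComplex}. Call these germs $G_1$ and $G_2$. The tensor product $CFK^\infty(S^3,T_{2,4m+1})\otimes CFK^\infty(S^3,T_{n,n+1})$ is then the $U$-translation closure of the ``product staircase'' $G_1\otimes G_2$, and one reads off its germ $C_{sum}$ as the subcomplex lying in the first quadrant and not $U$-translated out of it. I would then identify the resulting bigraded picture with the characteristic pieces displayed in Figure \ref{fig:SumComplex}.

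The main obstacle, and essentially the only non-formal step, is the bookkeeping for the tensor product of the two staircases: one must verify that after a filtration-preserving change of basis, all cancellations collapse to the claimed ``characteristic pieces'' and that every generator outside these pieces pairs off into an acyclic summand. The cleanest way is to single out in $G_1\otimes G_2$ the cycles that represent the generators surviving in each Alexander grading (these are forced by the product structure and the fact that $V_l$ of each factor is known from Lemmas \ref{lemma:Y_corr_terms} and \ref{lemma:Vs_Torus}) and then repeatedly cancel adjacent arrow pairs, absorbing each cancelled pair into $\mathcal A$. The geometry of the two staircases (one of width $1$ in the $i$-direction coming from $T_{2,4m+1}$, and one a full staircase coming from $T_{n,n+1}$) makes the cancellations local, so once the structure of one ``product block'' is verified the rest follows by translation.
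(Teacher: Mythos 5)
Your reduction is exactly the paper's: apply Theorem \ref{thm:HKL} to replace $CFK^\infty(S^3,J\#J)$ by a single $T_{2,\text{odd}}$ staircase plus an acyclic summand, tensor with $CFK^\infty(S^3,T_{n,n+1})$ via the K\"unneth formula, and note that the acyclic piece survives tensoring. That part is fine. The gap is that everything after this reduction --- which is the entire content of the lemma --- is deferred as ``bookkeeping.'' The paper's proof consists almost wholly of carrying out that step: it isolates three local configurations occurring in the tensor product of the two staircases (interior joints where the shifted copies of the $T_{2,2m+1}$ staircase overlap, interior joints where they do not, and the two end joints, which behave differently) and for each one writes down an explicit change of basis, e.g.\ $b_k\mapsto b_k+\sum c_i+\sum a_j$ with side conditions guaranteeing that the added terms have filtration no greater than $b_k$. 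Asserting that ``all cancellations collapse to the claimed characteristic pieces'' without exhibiting these basis changes is asserting the lemma, not proving it.

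Two specific points where your sketch would need repair. First, ``repeatedly cancel adjacent arrow pairs, absorbing each cancelled pair into $\mathcal A$'' is not automatically a \emph{filtration-preserving} operation: splitting off an acyclic two-step summand $\langle x,\partial x\rangle$ requires a change of basis eliminating all other components of the differential into and out of that pair, and one must check that the substitutions only add terms of lower or equal bifiltration --- this is precisely the condition $n_j(b_k)>n_j(c_i)$ appearing in the paper's formulas, and it is where the hypothesis that one factor is a narrow (width-one) staircase gets used. Second, ``once one product block is verified the rest follows by translation'' is too quick: the first and last joints of the $T_{n,n+1}$ germ do not acquire the stepping pattern (there the germ merely extends), and whether adjacent translated copies of the $T_{2,2m+1}$ staircase overlap depends on the relative sizes of $m$ and the steps of $T_{n,n+1}$, so the local pictures are genuinely of several types and each must be checked against Figure \ref{fig:SumComplex}.
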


\begin{figure}
\centering
\includegraphics[scale = .35]{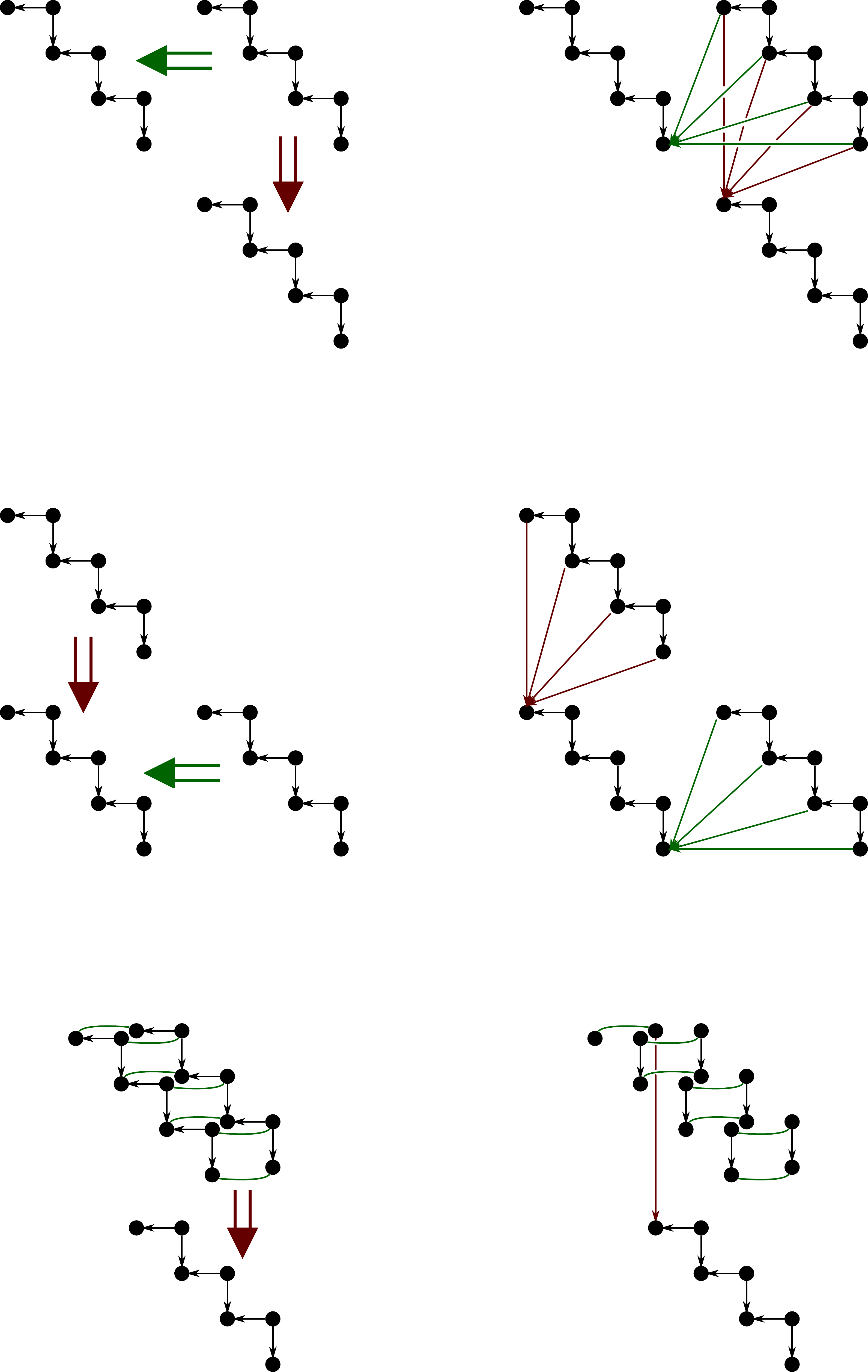}
\put(-150,440){\LARGE{$\simeq$}}
\put(-180,240){\LARGE{$\simeq$}}
\put(-150,60){\LARGE{$\simeq$}}
\put(-160,360){(a)}
\put(-160,160){(b)}
\put(-160,-20){(c)}
\caption{The three filtered chain homotopy equivalences used in the proof of Lemma \ref{lemma:Complex_Sum}.  Each is obtained by a filtration-preserving change of basis.  Note that in (a) and (b), if the off-diagonal group overlaps with either of the other two groups, then the result has a slightly different form, but is qualitatively the same.  Also, compare with Figures \ref{fig:SumComplex}(a) and \ref{fig:SumComplex}(b).  Here, each group of ``stairs'' represents a copy of $CFK^\infty(S^3, T_{2,2m+1})$; here $m=3$.  The groups of stairs are referred to as the $a$--, $b$--, and $c$--groups.}
\label{fig:PictureProofs}
\end{figure}

\begin{proof}
Recall that $CFK^\infty(S^3, J\#J)\simeq CFK^\infty(S^3, T_{2,2m+1})\oplus \mathcal A$, by Theorem \ref{thm:HKL}.  It follows that $CFK^\infty(S^3, J\#J\#T_{n,n+1})\simeq CFK^\infty(S^3, T_{n,n+1})\otimes CFK^\infty(S^3, T_{2,2m+1})\oplus\mathcal A$.  (Here, we use $\mathcal A$ to represent potentially different acyclic subcomplexes.)  To see that this tensor product has the desired form, we need three pictorial lemmas, shown in Figure \ref{fig:PictureProofs}.  All three parts show a chain homotopy equivalence achieved via a filtration preserving change of basis.  Consider Figure \ref{fig:PictureProofs}(a), and denote the chains by $a_1, \ldots, a_{2m+1}$, $b_1,\ldots, b_{2m+1}$, and $c_1, \ldots, c_{2m+1}$ (i.e., $b_i\mapsto a_i+c_i$). The double arrows mean that there should be an arrow between each pair of vertically or horizontally aligned chains.  The pertinent change of basis is:
$$b_k\mapsto b_k+\sum_{\substack{i<k, \\ i \text{ even}, \\ n_j(b_k)>n_j(c_i)}}c_i + \sum_{\substack{j>k, \\ j \text{ even} \\ n_i(b_k)>n_i(a_i)}}a_j,$$
for odd $k$.  (Note that the indexing variables $i,j$, and $k$ used here are not related to the uses of $i,j,$ and $k$ used elsewhere; in particular, $i$ and $n_i$ are not related here.)  The third condition on each summation guarantees that this change of basis is filtration preserving.  Note that any vertical arrows hitting the $a$--group or horizontal arrows hitting the $c$--group are unaffected, since the chains in these groups are unchanged.  The result is that the only arrows from the $b$--group to either the $a$--group or the $c$--group will go from $b_k$ with odd $k$ to $a_i$ and $c_j$ with odd $i$ and odd $j$.  Parts (a) and (b) of Figure \ref{fig:SumComplex} show the possible results of this local change of basis on the germ of $CFK^\infty(S^3, J\#J\#_{n,n+1})$.  In (a), the $a$ and $b$ pieces overlap; in (b) they do not.

Next, Figure \ref{fig:PictureProofs}(b) corresponds to the filtration preserving change of basis given by
$$c_k\mapsto c_k+\sum_{\substack{j>k, \\ i \text{ even}, \\ n_j(b_j)>n_j(c_k)}}b_j,$$
for odd $k$, and 
$$a_k\mapsto a_k + \sum_{\substack{i<k, \\ i \text{ even} \\ n_i(b_i)>n_i(a_k)}}b_i,$$
for odd $k$.

Finally, Figure \ref{fig:PictureProofs}(c) corresponds to the filtration preserving change of basis given by
$$a_k\mapsto a_k+c_1,$$
for odd $k\geq 3$, 
$$a_k\mapsto a_k + b_{k-1} + \sum_{\substack{i<k, \\ i \text{ even}}}c_i,$$
for even $k$, and
$$b_k\mapsto b_k + \sum_{\substack{i<k, \\ i \text{ even}}}c_i,$$
for odd $k>2$.  See also Figure \ref{fig:SumComplex}(c).

By applying these three types of change of basis, we see that $CFK^\infty(S^3, J\#J\#T_{n,n+1}) = C_{sum}\oplus\mathcal A$, where the characteristic pieces of $C_{sum}$ are shown in Figure \ref{fig:SumComplex}.  In other words, connected summing $T_{n,n+1}$ with $J\#J$ introduces a stepping pattern at every joint (i.e., the cycles) of the germ of $CFK^\infty(S^3, T_{n,n+1})$, except the first and last, where the germ simply extends by $m$.
\end{proof}

\begin{figure}
\centering
\includegraphics[scale = .25]{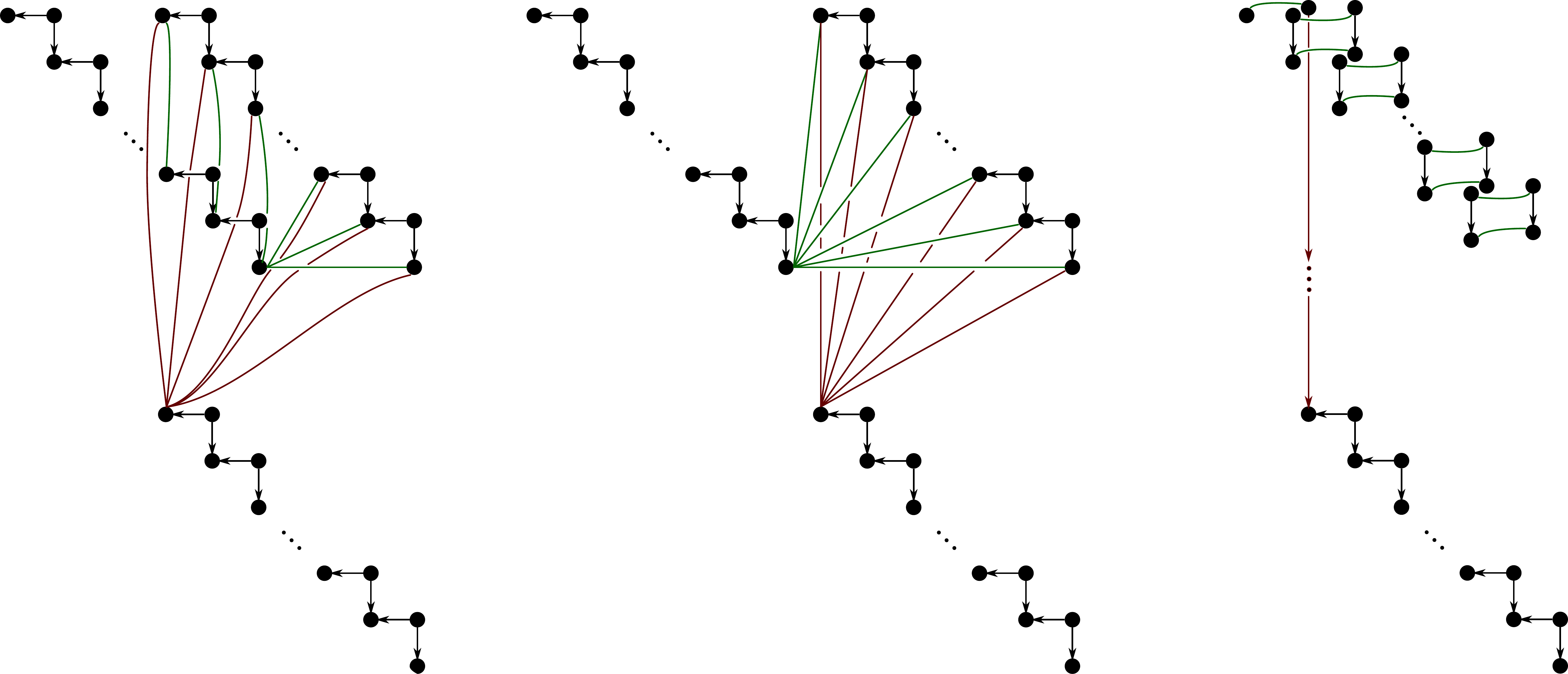}
\put(-360,-20){(a)}
\put(-210,-20){(b)}
\put(-50,-20){(c)}
\caption{Three characteristic pieces of the complex $C_{T_{n,n+1}}$.}
\label{fig:SumComplex}
\end{figure}

\begin{lemma}\label{lemma:Vs_Sum}
Let $K$ be $T_{2,3}$ or $D$ and let $J=\#_{k}K$.   Then, for $|l|\leq \frac{n^2+n}{2}$,
$$V_l(J\#J\#T_{n,n+1}) - V_l(T_{n,n+1}) =
\begin{cases}
L_k^{t(l)}(\overline l) & \text{ if \ $0\leq l\leq \frac{n(n-1)}{2}$}, \\
1 & \text{ if \ $\frac{n(n-1)}{2}\leq l \leq \frac{n(n-1)}{2}+k$}, \\
0 & \text{ if \ $\frac{n(n-1)}{2}+k<l$},
\end{cases}
$$
where $\overline l\in[\frac{-n+1}{2},\frac{n-1}{2}]$ is the mod $n$ reduction of $l$, and $t(l) = \frac{n-3}{2} - a$ if $|l|\in[an-\frac{n-1}{2},an+\frac{n+1}{2})$.
\end{lemma}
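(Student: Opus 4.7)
The plan is to compute $V_l$ directly from the germ $G_{sum}$ of $C_{sum}$ described in Lemma \ref{lemma:Complex_Sum} and to compare with the values $V_l(T_{n,n+1})$ already obtained in Lemma \ref{lemma:Vs_Torus}. Throughout, I will use the same geometric computation of $V_l$ from the germ that appears in those proofs; the key is to track which new generators, introduced by the modifications at each joint of $G_{T_{n,n+1}}$, become relevant when one shifts the germ by progressively higher powers of $U$.

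First, I would label the $n = 2d+1$ pivots of $G_{T_{n,n+1}}$ by an index $a \in \{0, 1, \ldots, n-1\}$ running along the staircase. Reading off Lemma \ref{lemma:Vs_Torus}, the $a$-th pivot is the one that controls $V_l(T_{n,n+1})$ precisely when $|l| \in [an - \tfrac{n-1}{2}, an + \tfrac{n+1}{2})$; this gives a geometric meaning to the index $a$ used in the definition of $t(l)$ and motivates reducing $l$ modulo $n$ to obtain $\overline{l} \in [-\tfrac{n-1}{2}, \tfrac{n-1}{2}]$. Next, invoking Lemma \ref{lemma:Complex_Sum}, I would describe $G_{sum}$ as $G_{T_{n,n+1}}$ with a $T_{2,4k+1}$-like stair attached at each joint: a cross pattern of extra generators at interior pivots and a one-sided extension of length $m = 2k$ at each of the two extreme pivots.

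With $G_{sum}$ in hand, the bulk of the calculation is pivot-by-pivot. At an interior pivot, the cross-shaped insertion contributes extra generators whose positions mirror the $V_\bullet$-profile of $T_{2,4k+1}$, namely the list $L_k$ (the halving in the definition of $L_k$ converts the correction-term differences of Lemma \ref{lemma:Y_corr_terms} back into $V$-invariants). As $l$ sweeps across a pivot's block of length $n$, the offset $\overline{l}$ sweeps across the full index range of $L_k$, and the extra shift permitted by the new generators is exactly $L_k(\overline{l})$. The truncation $L_k^{t(l)}$ arises for pivots close to the extreme joints: for those pivots, part of the cross has been replaced by the one-sided extension, so only indices of $L_k$ bounded below by $-t(l)$ survive. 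The final case $l > \tfrac{n(n-1)}{2}$ requires a separate argument: all pivots of $G_{T_{n,n+1}}$ now lie below the relevant line, so only the top-left extension contributes, and a direct check shows it adds exactly $1$ to $V_l$ on the interval $(\tfrac{n(n-1)}{2}, \tfrac{n(n-1)}{2} + k]$ before the extension itself runs out.

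The main obstacle I foresee is justifying the truncation factor $t(l)$, which requires carefully checking, for each position of $l$ within its pivot block, exactly which generators of the attached cross (or extension) remain relevant for the $V_l$ computation. The extreme-joint case is the most delicate; there, one must verify that after the algebraic simplification performed in Lemma \ref{lemma:Complex_Sum}, the surviving generators contribute at most $1$ to $V_l$ on their entire range, matching the piecewise formula. Once these checks are done, the three cases of the lemma follow directly from the pivot-by-pivot tally.
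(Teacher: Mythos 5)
Your strategy is the paper's own: take the germ of $C_{sum}$ supplied by Lemma \ref{lemma:Complex_Sum}, compute $V_l$ by the box criterion $V_l=\max\{z : U^zG\cap C_{\{\max(i,j-l)\geq 0\}}=\emptyset\}$ used in Lemmas \ref{lemma:Y_corr_terms} and \ref{lemma:Vs_Torus}, and observe that each joint of the $T_{n,n+1}$ staircase contributes a superimposed copy of a truncated $L_k$ centered at the value of $l$ indexing that joint, with the two extreme joints contributing only the one-sided extension that accounts for the middle and final cases. The identification of $L_k$ with the $V$--profile of $T_{2,4k+1}$, the reduction of $l$ modulo $n$, and the separate treatment of $l>\frac{n(n-1)}{2}$ all match the paper's argument. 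The one substantive point where your anticipated mechanism diverges from what Lemma \ref{lemma:Complex_Sum} actually provides is the origin of the truncation $t(l)$: you attribute it to part of an interior cross being ``replaced by the one-sided extension'' at the two extreme joints, but in the paper the truncation at an interior joint is caused by the attached stair-groups at that joint overlapping one another (Figure \ref{fig:SumComplex}(a) versus (b)). This overlap is governed by the lengths of the steps of the $T_{n,n+1}$ staircase adjacent to the joint, which shrink linearly as one moves from the central joint toward the ends; the amount of overlap is what yields $t(l)=\frac{n-3}{2}-a$. As literally stated, your mechanism would predict truncation only at the handful of pivots within reach of the two extreme joints, not the linear-in-$a$ truncation the lemma asserts, so the ``careful check'' you defer is exactly where you would need to replace your explanation with the overlap analysis; once that is done, the rest of your pivot-by-pivot tally goes through as in the paper.
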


\begin{proof}

Lemma \ref{lemma:Complex_Sum} tells us that the germ of $CFK^\infty(S^3, J\#J\#T_{n,n+1})$ is given locally as in Figure \ref{fig:SumComplex}.  Forming the connected sum changes the complex for $T_{n,n+1}$ by introducing a stepping pattern at each joint.  It is straightforward to see the effect of this on the $V_l(T_{n,n+1})$. For each joint, one simply superimposes a copy of $L_k^t$ over $\{V_l(T_{n,n+1})\}_{\geq 0}$, with $L_k(0)$ centered over the $l$ corresponding to the joint.  If $C_{sum}$ looks locally like Figure \ref{fig:SumComplex}(a) at the joint (i.e., there is some overlap), then we use $L^t_k$ where $t-1$ is the amount of overlap (in Figure \ref{fig:SumComplex}(a), the overlap shown is 3).  If $C_{sum}$ looks locally like Figure \ref{fig:SumComplex}(b) at the joint (i.e., there is no overlap), then $L^t_k = L_k$ (i.e., there is no truncation).

To clarify, $\{V_l(T_{n,n+1})\}_{\geq 0}$ is shown below, with the pivots highlighted in red.  These pivots correspond to the joints of $CFK^\infty(S^3, T_{n,n+1})$.  In $C_{sum}$, each such joint has been tensored with the germ for $CFK^\infty(S^3, T_{2,2m+1})$ and looks locally as in Figure \ref{fig:SumComplex}.  By considering these local pictures, we can see that $V_l=V_l(J\#J\#T_{n,n_1})$ will have the value claimed, because the introduction of the stepping pattern corresponds precisely to adding  $L_k^{t(l)}(\overline l)$ to $V_l$.

$$
{\SMALL
\begin{array}{cccccccccc}
\{ & \textcolor{Red}{Tr(d)}, & Tr(d), & \ldots & Tr(d), & Tr(d)-1, & Tr(d) - 2, & \ldots, & Tr(d-1)+2, & Tr(d-1) + 1, \\
 & \textcolor{Red}{Tr(d-1)}, & Tr(d-1), & \ldots & Tr(d-1), & Tr(d-1)-1, & Tr(d-1) - 2, & \ldots, & Tr(d-2)+2, & Tr(d-2) + 1, \\
 & \textcolor{Red}{Tr(d-2)}, & Tr(d-2), & \ldots & Tr(d-2), & Tr(d-2)-1, & Tr(d-2) - 2, & \ldots, & Tr(d-3)+2, & Tr(d-3) + 1, \\
 & & & & & \vdots & & & & \\
 & \textcolor{Red}{3}, & 3, & 3, & \ldots, & & \ldots, & 3, & 3, & 2, \\
 & \textcolor{Red}{1}, & 1, & 1, & \ldots, & & \ldots, & 1, & 1, & 1, \\
 & \textcolor{Red}{0}, & 0, & 0, & \ldots, & \}, && & &  \\
\end{array}
}
$$
\end{proof}

Now that we have calculated $\{V_l(J\#J\#T_{n,n+1})\}_{l\geq 0}$, it is straightforward to calculate the correction terms for $Y$.

\begin{corollary}\label{coro:Y'_corr_terms}
Let $K$ be $T_{2,3}$ or $D$, let $J=\#_kK$, and let $Y = S^3_{n^2+n}(J\#J\#T_{n,n+1})$.  Then,
$$\mathcal D(L(n,1)\#L(n+1,-1))-\mathcal D(Y) $$
is given by
$$
{\small
\left[
\setlength{\arraycolsep}{4pt}
\begin{array}{cccccccccccccccccccccc}
0 & \cdots & 0& 0 & 0 & 0 & 0 & 0 & 0 & 0 & 0 & 0 & 0 & 0 & 0 & \cdots & 0 & 0& 0& \cdots & 0 \\
\vdots & & \vdots & \vdots &\vdots & \vdots & \vdots & \vdots & \vdots & \vdots & \vdots & \vdots & \vdots & \vdots & \vdots && \vdots& \vdots & \vdots& & \vdots \\
0 & \cdots & 0 & 0 & 0 & 0 & 0 & 0 & 0 & 0 & 0 & 0 & 0 & 0 & 0 & \cdots & 0 & 0& 0& \cdots & 0 \\
2 & \cdots & 2 & \textcolor{Red}{2} & 0 & \textcolor{Red}{0} & 0 & 0 & 0 &0 & 0 & 2 & 2 & 2 & 2 &  \cdots & 2 & 2 & 2& \cdots & 2 \\
2 & \cdots & 2 & 2 & \textcolor{Red}{2} & 0 & \textcolor{Red}{0} & 0 & 0 & 0 & 0 & 2  & 2 & 2 & 2 & \cdots & 2 & 2 & 2& \cdots & 2 \\
4 & \cdots & 4 & 4 & 4 & \textcolor{Red}{2} & 0 & \textcolor{Red}{0} & 0 & 0 & 0 & 4 & 4 & 4 & 4 & \cdots & 4 & 4 & 4& \cdots & 4 \\
4 & \cdots & 4 & 4 & 4 & 4 & \textcolor{Red}{2} & 0 & \textcolor{Red}{0} & 0 & 0 & 4 & 4 & 4 & 4 & \cdots & 4 & 4 & 4& \cdots & 4\\
\vdots &  & \vdots & \vdots & \vdots &  &  & \textcolor{Red}{\ddots} & \ddots & \textcolor{Red}{\ddots} & & \vdots & \vdots & \vdots &\vdots & & \vdots & \vdots& \vdots& & \vdots \\
2k & \cdots & 2k & 2k & 2k & \cdots & 2k & 2k & \textcolor{Red}{2} & 0 & \textcolor{Red}{0} & 2k & 2k & 2k & 2k & \cdots & 2k & 2k & 2k & \cdots & 2k \\
2k & \cdots & 2k & 2k & 2k & \cdots & 2k & 2k & 2k & \textcolor{Red}{2} & 0 & \textcolor{Red}{2} & 2k & 2k &2k &  \cdots & 2k & 2k & 2k& \cdots & 2k \\
2k & \cdots & 2k & 2k & 2k & \cdots & 2k & 2k & 2k & 2k & \textcolor{Red}{0} & 0 & \textcolor{Red}{2} & 2k & 2k & \cdots & 2k & 2k & 2k& \cdots & 2k \\
\vdots & & \vdots & \vdots & \vdots & & \vdots & \vdots  & \vdots  & \vdots & & \textcolor{Red}{\ddots} & \ddots & \textcolor{Red}{\ddots} & & & \vdots   & \vdots& \vdots&  & \vdots \\
4 & \cdots & 4 & 4 & 4 & \cdots & 4 & 4 & 4 & 4 & 0 & 0 & \textcolor{Red}{0} & 0 & \textcolor{Red}{2} & 4 & 4 & 4 & 4& \cdots & 4 \\
4 & \cdots & 4 & 4 & 4 & \cdots& 4 & 4 & 4 &4 & 0 & 0 & 0 & \textcolor{Red}{0} & 0 & \textcolor{Red}{2} & 4 & 4 & 4& \cdots & 4 \\
2 & \cdots & 2 & 2 & 2 &  \cdots& 2 & 2 & 2 & 2 & 0 & 0 & 0 & 0 & \textcolor{Red}{0} & 0 & \textcolor{Red}{2} & 2 & 2& \cdots & 2 \\
2 & \cdots & 2 & 2 & 2 & \cdots & 2 & 2 & 2 & 2& 0 & 0& 0 & 0 & 0 & \textcolor{Red}{0} & 0 & \textcolor{Red}{2} & 2& \cdots & 2 \\
0 & \cdots & 0 & 0 & 0 & \cdots & 0 & 0 & 0 &0 & 0 & 0 & 0 & 0 &0 & 0 & 0 & 0 & 0& \cdots & 0 \\
\vdots & & \vdots & \vdots & \vdots & & \vdots & \vdots & \vdots  & \vdots & \vdots & \vdots & \vdots & \vdots & \vdots& \vdots & \vdots & \vdots & \vdots& & \vdots \\
0 & \cdots & 0 & 0 & 0 & \cdots & 0 & 0 & 0 & 0 & 0 & 0 & 0 & 0 & 0 & 0 & 0 & 0 & 0& \cdots & 0 \\
\end{array}
\right],
}
$$
where the rows are indexed by $i\in[\frac{-n+1}{2},\frac{n-1}{2}]$ and the columns are indexed by $j\in[\frac{-n}{2},\frac{n-2}{2}]$.
\end{corollary}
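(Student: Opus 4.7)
The plan is to derive the matrix by applying Theorem \ref{thm:ni-wu} twice. By Example \ref{ex:triad_spaces}, when $J$ is the unknot we have the diffeomorphism $S^3_{n^2+n}(T_{n,n+1}) \cong L(n,1)\#L(n+1,-1)$, so the correction terms of the lens-space sum are exactly those of this torus-knot surgery. Both sides of the desired difference $\mathcal{D}(L(n,1)\#L(n+1,-1)) - \mathcal{D}(Y)$ are therefore computed by Theorem \ref{thm:ni-wu}, and the common $d(L(n^2+n,1),l)$ contributions cancel, leaving
$$d(L(n,1)\#L(n+1,-1),\mathfrak{s}_l) \,-\, d(Y,\mathfrak{s}_l) \;=\; 2\max\{V'_l,\, H'_{l-(n^2+n)}\} \,-\, 2\max\{V_l,\, H_{l-(n^2+n)}\},$$
where the primed invariants belong to $J\#J\#T_{n,n+1}$ and the unprimed ones to $T_{n,n+1}$.

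Next I would use the symmetry $V_l = H_{-l}$ together with the monotonicity $V_l \geq V_{l+1}$ to reduce to the range $l \in [0,(n^2+n)/2]$, where both maxima are realized by the $V_l$ terms; the other half of the indexing follows from the conjugation symmetry of $\spinc$ structures. The two-dimensional indexing of the matrix arises from the Chinese Remainder Theorem: the affine identification $\spinc(Y) \cong \Z_n \oplus \Z_{n+1}$ from Subsection \ref{subsec:enumerate} sends $\mathfrak{s}_l$ to $(l \bmod n,\, l \bmod (n+1))$, and this agrees with the natural splitting on $L(n,1)\#L(n+1,-1)$ since both come from CRT on $\Z_{n^2+n}$. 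Substituting the closed-form values for $V_l(J\#J\#T_{n,n+1}) - V_l(T_{n,n+1})$ supplied by Lemma \ref{lemma:Vs_Sum} then gives an explicit function of $(i,j)$: it is zero when $l > n(n-1)/2 + k$, it equals $2$ in the narrow band $n(n-1)/2 \leq l \leq n(n-1)/2 + k$, and it equals $2L_k^{t(l)}(\bar l)$ for smaller $l$.

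The remaining step, and the main obstacle, is bookkeeping: verifying that these values arrange into exactly the displayed staircase. The red-highlighted zero diagonals of the matrix correspond to the pivots $l \equiv 0 \pmod n$ from Lemma \ref{lemma:Vs_Sum}, where $\bar l = 0$ and $L_k^t(0) = 0$. A single step off a pivot takes $\bar l$ to $\pm 1$ and produces an entry of $2$; successive steps give $4, 6, \ldots$, reaching the peak value $2k$ along the central rows. The truncation parameter $t(l)$ encodes why some of these staircases are cut short on one side: this corresponds precisely to the overlapping-germ case of Figure \ref{fig:SumComplex}(a) in Lemma \ref{lemma:Complex_Sum} versus the non-overlapping case of Figure \ref{fig:SumComplex}(b), and it produces the zero blocks on the far ends of the non-central rows. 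Finally, the outer bands of zeros come from the region $l > n(n-1)/2 + k$, while the pair of isolated $2$'s flanking the central $2k$-block comes from the intermediate band. Up to the conjugation ambiguity $(i,j) \leftrightarrow (-i,-j)$ already noted in the paper, this reassembles the stated matrix.
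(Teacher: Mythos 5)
Your overall strategy is the paper's: apply Theorem \ref{thm:ni-wu} both to $Y$ and to $L(n,1)\#L(n+1,-1)=S^3_{n^2+n}(T_{n,n+1})$ so that the common $d(L(n^2+n,1),l)$ terms cancel, reduce the maxima to the $V_l$ using $V_l=H_{-l}$ and monotonicity, and then push the differences $V_l(J\#J\#T_{n,n+1})-V_l(T_{n,n+1})$ of Lemma \ref{lemma:Vs_Sum} through the re-indexing $l=\tfrac{n(n+1)}{2}+(n+1)i-nj$ (your CRT description is the same identification, up to an affine shift). The first two paragraphs are exactly the argument the paper gives, just written out more explicitly.

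The bookkeeping paragraph, however, contains a concrete error that would assemble the wrong matrix. You claim that at a pivot ($\bar l=0$) one has $L_k^{t}(0)=0$ and that the entries grow to $2k$ as one moves away. The definition of $L_k$ puts the \emph{peak} at the origin: $L_k(0)=k$ (e.g.\ $L_3=\{\ldots,0,1,1,2,2,3,3,3,2,2,1,1,0,\ldots\}$ with the central $3$ at position $0$), and $L_k$ decays to $0$ at distance $2k$ from the origin. So the pivots contribute the maximal difference $2k$, not $0$. Moreover, since $l\equiv i\pmod n$, the reduction $\bar l$ is constant along each \emph{row} of the matrix (not along a diagonal), so the pivot condition $\bar l=0$ picks out the central rows rather than the red diagonals. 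The correct picture is that row $i$ carries the constant value $2L_k(i)$ away from the exceptional band --- whence the $2k$'s in the central rows, the $2,2,4,4,\ldots$ profile as $|i|$ decreases, and the identically zero rows for $|i|\geq 2k$ --- while the zero band and the red diagonals arise from the truncation $t(l)$ and from the regime $|l|>\tfrac{n(n-1)}{2}+k$, not from the pivots. With your inverted reading of $L_k$, the zeros and the $2k$'s in the assembled matrix would be interchanged, so this final step needs to be redone.
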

This matrix, $\mathcal M$ is not as complicated as it looks.  It is a $(n\times(n+1))$--matrix, with zeros along the right off-diagonal.  If the first column is removed, it is rotationally symmetric.  Think of $\mathcal M$ as the union of its diagonals. Every diagonal (other than the middle three) is simply a copy of (twice) $L_k^t$, where $t$ is the displacement (left or right) from the center three diagonals.  For example, if we consider the diagonal directly to the left of the middle three diagonals, we see a copy of (twice) $L_k^1$ that emanates towards the northwest.

\begin{proof}
The matrix presentation for $ \mathcal D(L(n,1)\#L(n+1,-1)) - \mathcal D(Y) $ follows from Lemma \ref{lemma:Vs_Sum} in light of the following remark.  Knowing $V_l(J\#J\#T_{n,n+1})$ allows us to calculate $d(Y,\frak s_l)$, but in the matrix above, we have given $d(Y, [\frak s_i, \frak s_j])$, which uses the enumeration of $\spinc$ structures introduced in Subsection \ref{subsec:enumerate}.  This correspondence is given by $l = \frac{n(n+1)}{2} +(n+1)i - nj$, where $l,i$, and $j$ are all taken to be centered about zero.  This identification maps the subgroup generated by $n+1\in\Z_{n^2+n}$ to the subgroup generated by $(1,1)\in\Z_n\oplus\Z_{n+1}$. 
\end{proof}

There is a slight issue related to our choice of identification.  In fact, there are four different identifications we could have chosen, each of which is related to the others by negating $i$, $j$, or both.  The following lemma proves that these are the only four identifications that we should concern ourselves with, since any other identification will not preserve the equivalence class of the correction terms modulo 2.

Even if we are content with only these four identifications, we need to note that a different choice of identification changes our labeling of the correction terms.  We have introduced an indeterminacy in which we cannot distinguish $i$ from $-i$ or $j$ from $-j$ in our labelings.  Fortunately, this does not affect the proofs of Theorem \ref{thm:main} and \ref{thm:main2}.

\begin{lemma}
Let $l = \frac{n(n+1)}{2} + (n+1)i - nj$.  Then,
$$d(S^3_{n^2+n}(J\#J\#T_{n,n+1}),l) \equiv d(L(n,1),i) - d(L(n+1,1),j) \pmod 2.$$
\end{lemma}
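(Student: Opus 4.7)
The plan is to reduce the statement to a direct arithmetic identity via two steps, both leveraging that the mod-2 information in the correction terms is captured entirely by lens space data.

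First, I would invoke Theorem \ref{thm:ni-wu} applied to $K = J\#J\#T_{n,n+1}$ with surgery coefficient $p = n^2+n$, which gives
\[
d(Y, l) = d(L(n^2+n, 1), l) - 2\max\{V_l(K), H_{l-(n^2+n)}(K)\}.
\]
Since the error term is twice a nonnegative integer, this immediately yields $d(Y, l) \equiv d(L(n^2+n, 1), l) \pmod 2$. The surgery coefficient $n^2+n$ is large enough for Theorem \ref{thm:ni-wu} to apply in the regime considered in Subsection \ref{subsec:coefficients}.

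It then suffices to establish the purely arithmetic statement
\[
d(L(n^2+n, 1), l) \equiv d(L(n, 1), i) - d(L(n+1, 1), j) \pmod 2
\]
whenever $l = \frac{n(n+1)}{2} + (n+1)i - nj$. Using Equation \ref{eqn:lens_corr_terms}, one first notes the clean substitution $2l - (n^2+n) = 2(n+1)i - 2nj$, which permits direct expansion of each of the three terms. After collecting like terms (and observing that the precise sign convention in the lens space formula is irrelevant for a mod-2 congruence), the difference of the two sides simplifies to
\[
-i^2 + 2ij - j^2 - i + j \;=\; -(i-j)(i-j+1),
\]
which is an even integer, being (the negative of) a product of two consecutive integers. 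Combining with the reduction from step one completes the proof.

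The main obstacle, to the extent there is one, is simply verifying that the algebra in the second step collapses as claimed. The key cancellation is that the $(n+1)^2/n$ and $n^2/(n+1)$ coefficients conspire with the cross term $2ij$ and the linear terms $-i+j$ to leave a perfect-square-plus-linear expression factoring as consecutive integers, which is precisely the parity condition needed.
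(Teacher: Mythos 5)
Your proof is correct and takes essentially the same route as the paper's: reduce to lens space correction terms via the surgery formula, then verify the arithmetic identity, whose even discrepancy the paper records as $k^2-k$ with $k=j-i$, matching your $-(i-j)(i-j+1)$ up to the paper's sign conventions. The only (harmless) difference is that you apply Theorem \ref{thm:ni-wu} directly to $J\#J\#T_{n,n+1}$, whereas the paper applies the surgery formula to $T_{n,n+1}$ and then invokes Lemma \ref{lemma:Vs_Sum} to transfer the parity to the connected sum; your shortcut is valid and marginally cleaner.
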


\begin{proof}
By the integer surgery formula, we see that $d(S^3_{n^2+n}(T_{n,n+1}),l) \equiv d(L(n^2+n,1),l) \pmod 2$.  Let $k=j-i$, then it is straightforward to show that
$$\frac{(2l-n(n+1))^2-n(n+1)}{4n(n+1)} - \frac{(2i-n)^2-n}{4n}-\frac{(2j-(n+1))^2-(n+1)}{4(n+1)} = k^2-k.$$
From this, it follows that $d(L(n^2+n,1),l)\equiv d(L(n,1),i)-d(L(n+1,1),j) \pmod 2$ and that 
$$d(S^3_{n^2+n}(T_{n,n+1}),l) \equiv d(L(n,1),i)-d(L(n+1,1),j) \pmod 2.$$

Furthermore, it is easy to see that $d(L(p,1),a)\equiv d(L(p,1),b) \pmod 2$ if and only if $b=p-a$, assuming $0\leq a<b<p$.    Thus, for any $Y$ obtained by surgery on a knot in $S^3$, we know that $d(Y,a) = d(Y,b)$ if and only if $b=a$ or $p-a$.  In other words, the equality of two correction terms is determined by their mod 2 equivalence class for such 3--manifolds.  This implies that
$$d(S^3_{n^2+n}(T_{n,n+1}),l) = d(L(n,1),i) - d(L(n+1,1),j).$$
To complete the proof, we simply note that $V_l(J\#J\#T_{n,n+1}) \equiv V_l(T_{n,n+1}) \pmod 2$, by Lemma \ref{lemma:Vs_Sum}.
\end{proof}

\begin{figure}
\centering
\includegraphics[scale = .5]{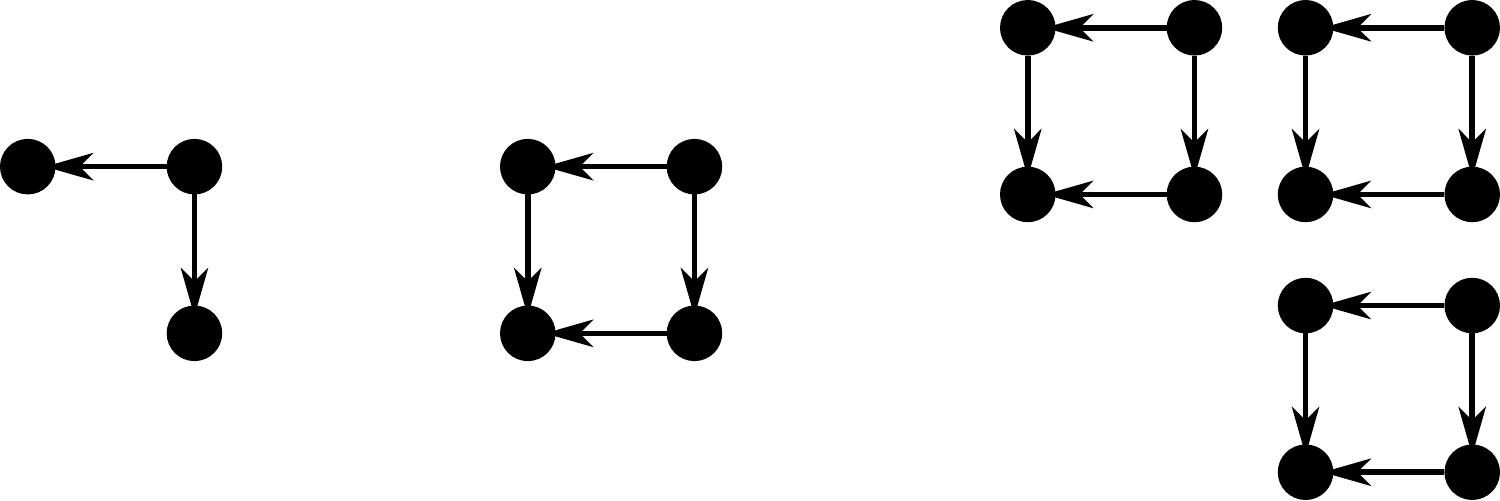}
\put(-98,33){\LARGE{$\simeq$}}
\put(-170,32){\LARGE{$\otimes$}}
\caption{The filtered chain homotopy equivalence shown above is a straightforward exercise.}
\label{fig:TensorLemma}
\end{figure}



\begin{figure}
\centering
\includegraphics[scale = .5]{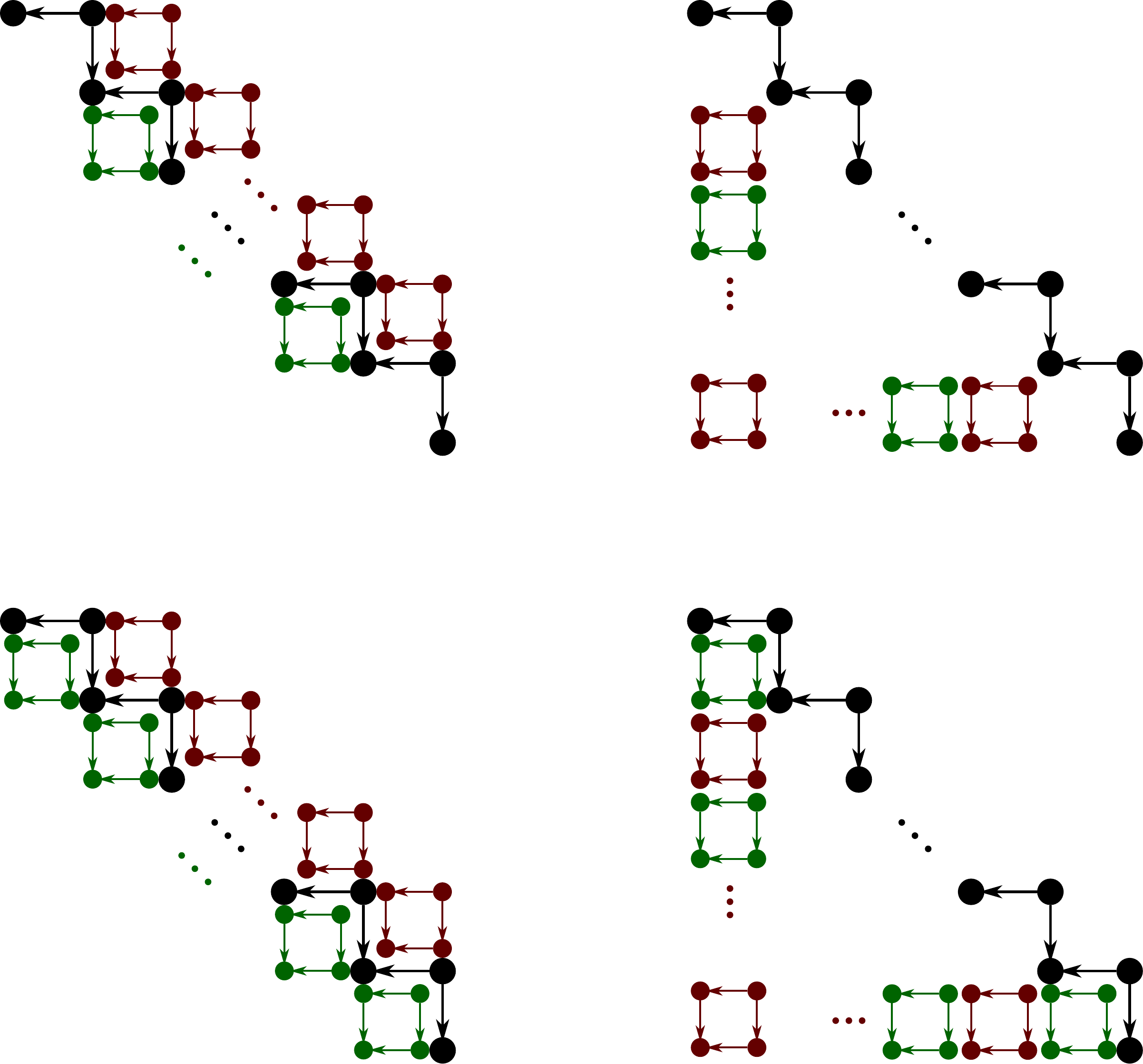}
\put(-180,250){\LARGE{$\simeq$}}
\put(-180,70){\LARGE{$\simeq$}}
\put(-180,160){(a)}
\put(-180,-20){(b)}
\put(-352,307){0}
\put(-353,138){0}
\put(-144,308){0}
\put(-144,138){0}
\put(-330,262){\textcolor{Green}{-1}}
\put(-270,202){\textcolor{Green}{-1}}
\put(-355,102){\textcolor{Green}{-1}}
\put(-273,20){\textcolor{Green}{-1}}
\put(-330,75){\textcolor{Green}{-1}}
\put(-250,-5){\textcolor{Green}{-1}}
\put(-210,55){\textcolor{Sepia}{2}}
\put(-235,80){\textcolor{Sepia}{2}}
\put(-268,113){\textcolor{Sepia}{2}}
\put(-293,138){\textcolor{Sepia}{2}}
\put(-210,240){\textcolor{Sepia}{2}}
\put(-235,265){\textcolor{Sepia}{2}}
\put(-268,300){\textcolor{Sepia}{2}}
\put(-293,323){\textcolor{Sepia}{2}}
\put(-148,264){\textcolor{Sepia}{-2}}
\put(-148,241){\textcolor{Green}{-3}}
\put(-145,178){\textcolor{Sepia}{-$m$}}
\put(-62,176){\textcolor{Sepia}{-2}}
\put(-85,176){\textcolor{Green}{-3}}
\put(-148,105){\textcolor{Green}{-1}}
\put(-148,80){\textcolor{Sepia}{-2}}
\put(-148,55){\textcolor{Green}{-3}}
\put(-145,-8){\textcolor{Sepia}{-$m$}}
\put(-61,-10){\textcolor{Sepia}{-2}}
\put(-85,-10){\textcolor{Green}{-3}}
\put(-37,-10){\textcolor{Green}{-1}}
\caption{Germs for the total complexes of (a) $CFK^\infty(S^3, \#_mT_{2,3})$ and (b) $CFK^\infty(S^3, \#_mD)$.  Note that each square above is meant to represent a multitude of overlaid squares.  In (a), gradings of overlaid squares match up and are as shown.  In (b), gradings in overlaid squares may be lower than shown.  (See text.) }
\label{fig:TorsionComplexes}
\end{figure}

Descriptions of the germs for the total complexes $CFK^\infty(S^3, \#_mT_{2,3})$ and $CFK^\infty(S^3, \#_mD)$ are given in Figure \ref{fig:TorsionComplexes}.  These presentation follow from Theorem \ref{thm:HKL}, the pictorial lemma shown in Figure \ref{fig:TensorLemma}, and induction.  The proof of this pictorial lemma is straightforward.  Note that in Figure \ref{fig:TorsionComplexes} each acyclic square shown is meant to represent a multitude of overlying acyclic squares, but the gradings are controlled.

Here is how the gradings behave in Figure \ref{fig:TorsionComplexes}.  In (a), the gradings are as shown, and overlaid squares have the same gradings as the representative shown.  In (b), for each collection of overlaid squares, the maximally graded representative is shown, and there are $m+1$ different possible gradings.  For example, consider the bottom-left square in the right part of (b).  This square represents many squares, each of which has as its bottom-left corner a chain with grading in $\{-m,-(m+1),-(m+2),\ldots, -2m\}$.

We are now prepared to prove one final property about  $HF^+(Y')$, which we will need in order to complete the proof in Section \ref{section:calculations}.

\begin{lemma}\label{lemma:reduced_bound}
Let $Y = S^3_{n^2+n}(J\#J\#T_{n,n+1})$, and let $\xi\in HF_{red}(Y,[\frak s_i,\frak s_j])$. Then
$$gr(\xi) \leq gr\left(\mathcal T_{i,j}^+(Y)\right).$$
\end{lemma}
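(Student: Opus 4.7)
The plan is to apply the large surgery formula (Theorem \ref{thm:surgery_formula}), which identifies $HF^+(Y, [\mathfrak{s}_i, \mathfrak{s}_j])$ with the homology of the subquotient
$$A^+_l := CFK^\infty(J\#J\#T_{n, n+1})_{\{\max(i, j-l) \geq 0\}},$$
where $l$ is the spin-c parameter corresponding to $[\mathfrak{s}_i, \mathfrak{s}_j]$ (with the grading shift $\frac{p - (2l - p)^2}{4p}$ for $p = n^2 + n$). By Lemma \ref{lemma:Complex_Sum}, the knot Floer complex splits as $C_{\mathrm{sum}} \oplus \mathcal{A}$ with $\mathcal{A}$ acyclic, so the computation reduces to $A^+_l$ restricted to $C_{\mathrm{sum}}$.

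Next, I would compare this subquotient against the analogous one for the L-space knot $T_{n, n+1}$ alone. For $T_{n, n+1}$, Lemma \ref{lemma:Vs_Torus} together with the L-space property gives $H_*(A^+_l) \cong \mathcal{T}^+$, with tower bottom at grading $d(L(p,1), l) - 2V_l(T_{n, n+1})$. Replacing $T_{n, n+1}$ by $J\#J\#T_{n, n+1}$ amounts to inserting $T_{2, 2m+1}$-staircases at every joint of the main backbone (Figure \ref{fig:SumComplex}), which has two effects: (i) it lowers the tower bottom by $2\Delta V_l = 2(V_l(J\#J\#T_{n,n+1}) - V_l(T_{n,n+1}))$, given explicitly by Lemma \ref{lemma:Vs_Sum}, and (ii) it introduces $U$-torsion cycles represented by the ``tips'' of the inserted staircases whose $U$-iterates eventually leave $A^+_l$.

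To conclude, I would check directly from Figure \ref{fig:SumComplex} that for each reduced generator $\xi$, the grading of its representative is at most the new tower-bottom grading: each tip of an inserted staircase that enters $A^+_l$ lies at a bi-filtration position ``$U$-below'' the joint to which it was attached, by an amount at least matching the $V_l$-drop produced by that same insertion. Hence $gr(\xi) \leq d(Y, [\mathfrak{s}_i, \mathfrak{s}_j]) = gr(y^+_{i,j})$, which is the desired bound on $gr(\mathcal{T}^+_{i,j}(Y))$.

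The main obstacle will be the case analysis: depending on where $l$ sits relative to the joints of the backbone, the ``shape'' of the inserted piece within $A^+_l$ varies among configurations (a), (b), and (c) of Figure \ref{fig:SumComplex}, and the precise value of $\Delta V_l$ comes from the truncated-tail data $L^{t(l)}_k(\overline{l})$ of Lemma \ref{lemma:Vs_Sum}. One must verify the grading inequality in each configuration, including the corner insertions at the two endpoints of the backbone where the germ extends rather than overlaps; this is tedious but reduces to the same germ-by-germ inspection already performed in proving Lemmas \ref{lemma:Vs_Torus} and \ref{lemma:Vs_Sum}.
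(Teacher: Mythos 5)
Your overall strategy --- pass to the large--surgery model $A_l^+$ and bound the gradings of the $U$--torsion classes against the tower bottom --- is reasonable, but there is a genuine gap in the very first reduction. You invoke Lemma \ref{lemma:Complex_Sum} to replace $CFK^\infty(S^3,J\#J\#T_{n,n+1})$ by $C_{sum}$, discarding the acyclic summand $\mathcal A$. That step is legitimate when computing correction terms (acyclicity kills the contribution to $HF^\infty$ and hence to the tower), but it is not legitimate here: the subquotient $A_l^+(\mathcal A)=\mathcal A_{\{\max(i,j-l)\geq 0\}}$ of an acyclic complex is generally \emph{not} acyclic, and its homology lands precisely in $HF_{red}(Y,[\frak s_i,\frak s_j])$ --- the group the lemma is about. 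Since $J=\#_k D$ contributes a large acyclic summand (Theorem \ref{thm:HKL}), a substantial part of $HF_{red}$ comes from $\mathcal A$, and your joint-by-joint inspection of the inserted staircases in Figure \ref{fig:SumComplex} never sees those classes. To close the gap you would need grading control on all cycles of the full complex, including the acyclic squares, not just on $C_{sum}$.

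The paper's proof avoids both the case analysis and this pitfall by a different mechanism: it keeps the full tensor decomposition $C = CFK^\infty(S^3,J\#J)\otimes CFK^\infty(S^3,T_{n,n+1})$ and works germ by germ. A representative $c$ of a reduced class must lie in a germ $U^zG$ with $z\geq z'$, where $U^{z'}G$ contains the tower bottom; after translating into $U^{z'}G$ and factoring, one uses that \emph{every} cycle in the germ of $CFK^\infty(S^3,J\#J)$ has nonpositive grading (this is exactly the grading data recorded in Figure \ref{fig:TorsionComplexes}, which covers the acyclic squares) while every cycle in $U^{z'}G^2$ has grading $-2z'$. Since the tower bottom realizes grading $0+(-2z')$, the bound $gr(\xi)\leq gr(\mathcal T^+_{i,j}(Y))$ follows in one stroke, uniformly in $l$ and without separating configurations (a), (b), (c) of Figure \ref{fig:SumComplex}. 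If you want to salvage your approach, the essential missing ingredient is precisely this grading bound on cycles in the germ of $CFK^\infty(S^3,J\#J)$, applied to the acyclic summand as well as to the staircase.
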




\begin{proof}
Let $C = CFK^\infty(S^3, J\#J\#T_{n,n+1})$, let $C^1 = CFK^\infty(S^3,J\#J)$, and let $C^2 = CFK^\infty(S^3, T_{n,n+1})$, so $C = C^1\otimes C^2$.  Let $G, G^1$, and $G^2$ be the germs for $C, C^1$, and $C^2$, respectively.  Let $\xi\in HF_{red}(Y,[\frak s_i,\frak s_j])$, and let $c\in C$ be any chain such that $[c]=\xi$.

Let $c'\in C$ be any chain such that $[c']$ is the element of lowest grading in $\mathcal T_{i,j}(Y)$, so $gr(c') = gr(\mathcal T_{i,j}(Y))$.  Let $G' = U^{z'}G$ be the germ containing $c'$, where $z'\in\Z$.  Any chain in $\cup_{e> 0}U^eG'$ that is not homologous to a $U$--translate of $c'$ is not a cycle.  To see this, simply observe that $H_*(\cup_{e> 0}U^eG') \cong \mathcal T^+$, and is generated by $U$--translates of $[c']$.

Suppose that $c\in U^zG$ for some $z\in\Z$.  Since $c$ is a cycle and not homologous to a $U$--translate of $c'$, we see that $z\geq z'$.  Let $c'' = U^{z'-z}c$, so $c''\in G'$, and let $c'' = c^1\otimes c^2$ with $c^1\in G^1$ and $c^2\in U^{z'}G^2$.  Note that
$$0 = \partial c'' = \partial(c^1\otimes c^2) = \partial c^1\otimes c^2 + c^1\otimes\partial c^2.$$
It follows that $c^1$ and $c^2$ are both cycles

By considering Figure \ref{fig:TorsionComplexes}, we see that any cycle in $G^1$ has nonpositive grading.  Furthermore, any cycle in $U^{z'}G^2$ has grading $-2z'$.  Let $c' = c^3\otimes c^4$, where $c^3\in G^1$ and $c^4\in U^{z'}G^2$.  Since $[c']$ is the element of lowest grading in $\mathcal T_{i,j}(Y)$ it follows that $gr(c^3)=0$ and $gr(c^4)=-2z'$.

It follows that $gr(c)\leq gr(c'')\leq gr(c')$, as desired.

\end{proof}

\bibliographystyle{amsalpha}
\bibliography{MasterBibliography.bib}

\end{document}